\providecommand{\tabularnewline}{\\}
\def\RSsubtxt{section~}\newref{sub}{name = \RSsubtxt}}
\def\RSthmtxt{theorem~}\newref{thm}{name = \RSthmtxt}}
\def\RSlemtxt{lemma~}\newref{lem}{name = \RSlemtxt}}
\numberwithin{equation}{section}
\numberwithin{figure}{section}
\numberwithin{table}{section}
\theoremstyle{plain}
\newtheorem{thm}{\protect\theoremname}[section]
  \theoremstyle{definition}
  \newtheorem{defn}[thm]{\protect\definitionname}
  \theoremstyle{remark}
  \newtheorem{rem}[thm]{\protect\remarkname}
  \theoremstyle{plain}
  \newtheorem{lem}[thm]{\protect\lemmaname}
  \theoremstyle{plain}
  \newtheorem{cor}[thm]{\protect\corollaryname}
  \theoremstyle{plain}
  \newtheorem{conjecture}[thm]{\protect\conjecturename}
  \theoremstyle{remark}
  \newtheorem*{acknowledgement*}{\protect\acknowledgementname}
\providecommand{\MR}[1]{}
\renewcommand{\section}{%
\@startsection{section}{1}%
  \z@{.7\linespacing\@plus\linespacing}{.5\linespacing}%
  {\normalfont\scshape\centering\bfseries}}
\renewcommand{\subsection}{%
\@startsection{subsection}{2}%
  \z@{.5\linespacing\@plus.7\linespacing}{.5\linespacing}%
  {\normalfont\bfseries}}
\renewcommand{\subsubsection}{%
\@startsection{subsubsection}{2}%
  \z@{.5\linespacing\@plus.7\linespacing}{.5\linespacing}%
  {\normalfont\bfseries}}
\theoremstyle{definition}
  \providecommand{\acknowledgementname}{Acknowledgement}
  \providecommand{\conjecturename}{Conjecture}
  \providecommand{\corollaryname}{Corollary}
  \providecommand{\definitionname}{Definition}
  \providecommand{\lemmaname}{Lemma}
  \providecommand{\remarkname}{Remark}
\providecommand{\theoremname}{Theorem}
\begin{document}

\title{Frames and Factorization of Graph Laplacians}

\author{Palle Jorgensen and Feng Tian}

\address{(Palle E.T. Jorgensen) Department of Mathematics, The University
of Iowa, Iowa City, IA 52242-1419, U.S.A. }

\email{palle-jorgensen@uiowa.edu}

\urladdr{http://www.math.uiowa.edu/\textasciitilde{}jorgen/}

\address{(Feng Tian) Department of Mathematics, Wright State University, Dayton,
OH 45435, U.S.A.}

\email{feng.tian@wright.edu}

\urladdr{http://www.wright.edu/\textasciitilde{}feng.tian/}

\subjclass[2000]{Primary 47L60, 46N30, 46N50, 42C15, 65R10, 05C50, 05C75, 31C20; Secondary
46N20, 22E70, 31A15, 58J65, 81S25}

\keywords{Unbounded operators, deficiency-indices, Hilbert space, boundary
values, weighted graph, reproducing kernel, Dirichlet form, graph
Laplacian, resistance network, harmonic analysis, frame, Parseval
frame, Friedrichs extension, reversible random walk, resistance distance,
energy Hilbert space.}

\maketitle
\pagestyle{myheadings}
\markright{Frames and Factorization of Graph Laplacians}
\begin{abstract}
Using functions from electrical networks (graphs with resistors assigned
to edges), we prove existence (with explicit formulas) of a canonical
Parseval frame in the energy Hilbert space $\mathscr{H}_{E}$ of a
prescribed infinite (or finite) network. Outside degenerate cases,
our Parseval frame is not an orthonormal basis. We apply our frame
to prove a number of explicit results: With our Parseval frame and
related closable operators in $\mathscr{H}_{E}$ we characterize the
Friedrichs extension of the $\mathscr{H}_{E}$-graph Laplacian. 

We consider infinite connected network-graphs $G=\left(V,E\right)$,
$V$ for vertices, and \emph{E} for edges. To every conductance function
$c$ on the edges $E$ of $G$, there is an associated pair $\left(\mathscr{H}_{E},\Delta\right)$
where $\mathscr{H}_{E}$ in an energy Hilbert space, and $\Delta\left(=\Delta_{c}\right)$
is the $c$-Graph Laplacian; both depending on the choice of conductance
function $c$. When a conductance function is given, there is a current-induced
orientation on the set of edges and an associated natural Parseval
frame in $\mathscr{H}_{E}$ consisting of dipoles. Now $\Delta$ is
a well-defined semibounded Hermitian operator in both of the Hilbert
$l^{2}\left(V\right)$ and $\mathscr{H}_{E}$. It is known to automatically
be essentially selfadjoint as an $l^{2}\left(V\right)$-operator,
but generally not as an $\mathscr{H}_{E}$ operator. Hence as an $\mathscr{H}_{E}$
operator it has a Friedrichs extension. In this paper we offer two
results for the Friedrichs extension: a characterization and a factorization.
The latter is via $l^{2}\left(V\right)$.
\end{abstract}
\tableofcontents{}

\section{Introduction}

We study infinite networks with the use of frames in Hilbert space.
While our results apply to finite systems, we will concentrate on
the infinite case because of its statistical significance.

By a network we mean a graph $G$ with vertices $V$ and edges $E$.
We assume that each vertex is connected by single edges to a finite
number of neighboring vertices, and that resistors are assigned to
the edges. From this we define an associated graph-Laplacian $\Delta$,
and a resistance metric on $V$.

The functions on $V$ of interest represent voltage distributions.
While there are a number of candidates for associated Hilbert spaces
of functions on $V$, the one we choose here has its norm-square equal
to the energy of the voltage function. This Hilbert space is denoted
$\mathscr{H}_{E}$, and it depends on an assigned conductance function
(= reciprocal of resistance.) We will further study an associated
graph Laplacian as a Hermitian semibounded operator with dense domain
in $\mathscr{H}_{E}$.

In our first result we identify a canonical Parseval frame in $\mathscr{H}_{E}$,
and we show that it is not an orthonormal basis except in simple degenerate
cases. The frame vectors (for $\mathscr{H}_{E}$) are indexed by oriented
edges $e$, a dipole vector for each $e$, and a current through $e$.

We apply our frame to complete a number of explicit results. We study
the Friedrichs extension of the graph Laplacian $\Delta$. And we
use our Parseval frame and related closable operators in $\mathscr{H}_{E}$
to give a factorization of the Friedrichs extension of $\Delta$.

Continuing earlier work \cite{Jor08,JoPe13,JoPe11a,JoPe11b,JoPe10,MS14,Fol14}
on analysis and spectral theory of infinite connected network-graphs
$G=\left(V,E\right)$, $V$ for vertices, and $E$ for edges, we study
here a new factorization for the associated graph Laplacians. Our
starting point is a fixed conductance function $c$ for $G$. It is
known that, to every prescribed conductance function $c$ on the edges
$E$ of $G$, there is an associated pair $\left(\mathscr{H}_{E},\Delta\right)$
where $\mathscr{H}_{E}$ in an energy Hilbert space, and $\Delta\left(=\Delta_{c}\right)$
is the $c$-Graph Laplacian; both depending on the choice of conductance
function $c$. For related papers on frames and discrete harmonic
analysis, see also \cite{AJSV13,JS12,CJ12b,CJ12a,JP12,Dur06,ST81,Ter78}
and the papers cited there.

It is also known that $\Delta$ is a well-defined semibounded Hermitian
operator in both of the Hilbert $l^{2}\left(V\right)$ and $\mathscr{H}_{E}$;
densely defined in both cases; and in each case with a natural domain,
see \cite{JoPe10}. As an $l^{2}\left(V\right)$-operator $\Delta$
has an $\infty\times\infty$ representation expressed directly in
terms of $\left(c,G\right)$, and it is further known that $\Delta$
is automatically be essentially selfadjoint as an $l^{2}\left(V\right)$-operator,
but generally not as an $\mathscr{H}_{E}$ operator, \cite{Jor08,JoPe13}.
Hence as an $\mathscr{H}_{E}$ operator it has a Friedrichs extension.
In this paper we offer two results for the $\mathscr{H}_{E}$ Friedrichs
extension: a characterization and a factorization. The latter is via
the Hilbert space $l^{2}\left(V\right)$.

We begin with the basic notions needed, and we then turn to our theorem
about Parseval frames: In \secref{eframe}, we show that, when a conductance
function is given, there is a current-induced orientation on the set
of edges and an associated natural Parseval frame in the energy Hilbert
space $\mathscr{H}_{E}$ with the frame vectors consisting of dipoles.

\section{\label{sec:setting}Basic Setting}

Overview of the details below: The graph Laplacian $\Delta$ (Definition
\ref{def:lap}) has an easy representation as a densely defined semibounded
operator in $l^{2}\left(V\right)$ via its matrix representation,
see Remark \ref{rem:lapl2}. To do this we use implicitly the standard
orthonormal (ONB) basis $\left\{ \delta_{x}\right\} $ in $l^{2}(V)$.
But in network problems, and in metric geometry, $l^{2}(V)$ is not
useful; rather we need the energy Hilbert space $\mathscr{H}_{E}$,
see Lemma \ref{lem:eframe}. 

The problem with this is that there is not an independent characterization
of the domain $dom\left(\Delta,\mathscr{H}_{E}\right)$ when $\Delta$
is viewed as an operator in $\mathscr{H}_{E}$ (as opposed to in $l^{2}(V)$);
other than what we do in Definition \ref{def:D}, i.e., we take for
its domain $\mathscr{D}_{E}$ = finite span of dipoles. This creates
an ambiguity with functions on $V$ versus vectors in $\mathscr{H}_{E}$.
Note, vectors in $\mathscr{H}_{E}$ are equivalence classes of functions
on $V$. In fact we will see that it is not feasible to aim to prove
properties about $\Delta$ in $\mathscr{H}_{E}$ without first introducing
dipoles; see Lemma \ref{lem:dipole} below. Also the delta-functions
$\left\{ \delta_{x}\right\} $ from the $l^{2}(V)$-ONB will typically
not be total in $\mathscr{H}_{E}$. In fact, the $\mathscr{H}_{E}$
ortho-complement of $\left\{ \delta_{x}\right\} $ in $\mathscr{H}_{E}$
consists of the harmonic functions in $\mathscr{H}_{E}$.

Let $V$ be a countable discrete set, and let $E\subset V\times V$
be a subset such that:
\begin{enumerate}
\item $\left(x,y\right)\in E\Longleftrightarrow\left(y,x\right)\in E$;
$x,y\in V$;
\item $\#\left\{ y\in V\:|\:\left(x,y\right)\in E\right\} $ is finite,
and $>0$ for all $x\in V$; 
\item $\left(x,x\right)\notin E$; and
\item $\exists\, o\in V$ s.t. for all $y\in V$ $\exists\, x_{0},x_{1},\ldots,x_{n}\in V$
with $x_{0}=o$, $x_{n}=y$, $\left(x_{i-1},x_{i}\right)\in E$, $\forall i=1,\ldots,n$.
(This property is called connectedness.)
\end{enumerate}
If a conductance function $c$ is given we require $c_{x_{i-1}x_{i}}>0$.
\begin{defn}
\label{def:cond}A function $c:E\rightarrow\mathbb{R}_{+}\cup\left\{ 0\right\} $
is called \uline{conductance function} if $c\left(e\right)\geq0$,
$\forall e\in E$, and if for all $x\in V$, and $\left(x,y\right)\in E$,
$c_{xy}>0$, and $c_{xy}=c_{yx}$. 
\end{defn}
Notation: If $x\in V$, we set 
\begin{equation}
c\left(x\right):=\sum_{y}c_{xy},\:\mbox{sum over \ensuremath{\left\{ y\in V\:\big|\:\left(x,y\right)\in E\right\} :=E\left(x\right)}.}\label{eq:cond}
\end{equation}
The summation in (\ref{eq:cond}) is denoted $x\sim y$. We say that
$x\sim y$ if $\left(x,y\right)\in E$. 
\begin{defn}
\label{def:lap}When $c$ is a conductance function (see Def. \ref{def:cond})
we set $\Delta=\Delta_{c}$ (the corresponding graph Laplacian)
\begin{align}
\left(\Delta u\right)\left(x\right) & =\sum_{y\sim x}c_{xy}\left(u\left(x\right)-u\left(y\right)\right)\label{eq:lap}\\
 & =c\left(x\right)u\left(x\right)-\sum_{y\sim x}c_{xy}u\left(y\right).\nonumber 
\end{align}
\end{defn}
\begin{rem}
\label{rem:lapl2}Given $\left(V,E,c\right)$ as above, and let $\Delta=\Delta_{c}$
be the corresponding graph Laplacian. With a suitable ordering on
$V$, we obtain the following banded $\infty\times\infty$ matrix-representation
for $\Delta$: 
\begin{equation}
\begin{bmatrix}c\left(x_{1}\right) & -c_{x_{1}x_{2}} & 0 & \cdots & \cdots & \cdots & \cdots & 0 & \cdots\\
-c_{x_{2}x_{1}} & c\left(x_{2}\right) & -c_{x_{2}x_{3}} & 0 & \cdots & \cdots & \cdots & \vdots & \cdots\\
0 & -c_{x_{3}x_{2}} & c\left(x_{3}\right) & -c_{x_{3}x_{4}} & 0 & \cdots & \cdots & \huge\mbox{0} & \cdots\\
\vdots & 0 & \ddots & \ddots & \ddots & \ddots & \vdots & \vdots & \cdots\\
\vdots & \vdots & \ddots & \ddots & \ddots & \ddots & 0 & \vdots & \cdots\\
\vdots & \huge\mbox{0} & \cdots & 0 & -c_{x_{n}x_{n-1}} & c\left(x_{n}\right) & -c_{x_{n}x_{n+1}} & 0 & \cdots\\
\vdots & \vdots & \cdots & \cdots & 0 & \ddots & \ddots & \ddots & \ddots
\end{bmatrix}\label{eq:lapm}
\end{equation}
(We refer to \cite{GLS12} for a number of applications of infinite
banded matrices.)

If $\#E\left(x\right)=2$ for all $x\in V$ where $E\left(x\right):=\left\{ y\in V\:|\:\left(x,y\right)\in E\right\} $
we say that the corresponding $\left(V,E,c\right)$ is nearest neighbor,
and in this case, the matrix representation takes the following form
relative the an ordering in $V$:
\begin{equation}
\xymatrix{o\ar@/_{1pc}/@{<->}[r]_{c_{ox_{1}}} & x_{1}\ar@/_{1pc}/@{<->}[r]_{c_{x_{1}x_{2}}} & x_{2} & \cdots & x_{n-1}\ar@/_{1pc}/@{<->}[r]_{c_{x_{n-1}x_{n}}} & x_{n}\ar@/_{1pc}/@{<->}[r]_{c_{x_{n}x_{n+1}}} & x_{n+1}\cdots}
\label{eq:nbh}
\end{equation}

\end{rem}
\begin{equation}
\begin{bmatrix}c\left(o\right) & -c_{ox_{1}} & 0 & \cdots & \cdots & \cdots & 0\\
-c_{ox_{1}} & c\left(x_{1}\right) & -c_{x_{1}x_{2}} & 0 & \cdots & \cdots & \vdots\\
0 & -c_{x_{1}x_{2}} & c\left(x_{2}\right) & c_{x_{2}x_{3}} & 0 &  & \huge\mbox{0}\\
\vdots & \ddots & \ddots & \ddots & \ddots & \ddots & \vdots\\
\vdots & \cdots & \ddots & -c_{x_{n}x_{n-1}} & c\left(x_{n}\right) & -c_{x_{n}x_{n+1}} & 0\\
\vdots & \cdots &  & \ddots & \ddots & \ddots & \ddots\\
\huge\mbox{0} & \cdots & \cdots & \cdots & 0
\end{bmatrix}\label{eq:nhd2}
\end{equation}

\begin{rem}[\textbf{Random Walk}]
\label{rem:rw} If $\left(V,E,c\right)$ is given as in Definition
\ref{def:lap}, then for $\left(x,y\right)\in E$, set 
\begin{equation}
p_{xy}:=\frac{c_{xy}}{c\left(x\right)}\label{eq:pxy}
\end{equation}
and note then $\left\{ p_{xy}\right\} $ in (\ref{eq:pxy}) is a system
of transition probabilities, i.e., $\sum_{y}p_{xy}=1$, $\forall x\in V$,
see Fig. \ref{fig:tp} below.

\begin{figure}[H]
\includegraphics[scale=0.5]{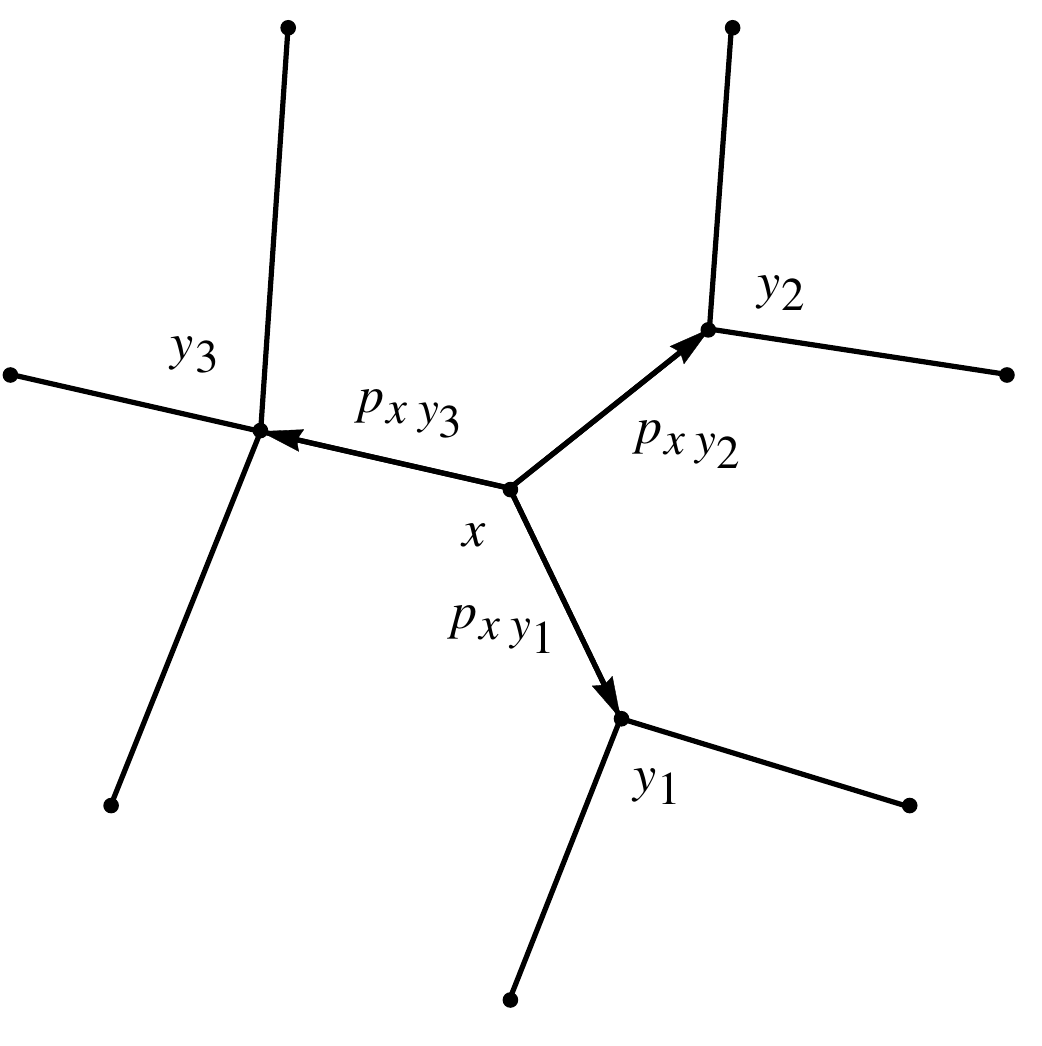}

\protect\caption{\label{fig:tp}Transition probabilities $p_{xy}$ at a vertex $x$
$\left(\mbox{in }V\right)$. }

\end{figure}

\end{rem}
A Markov-random walk on $V$ with transition probabilities $\left(p_{xy}\right)$
is said to be \emph{\uline{reversible}} iff $\exists$ a positive
function $\widetilde{c}$ on $V$ such that
\begin{equation}
\widetilde{c}\left(x\right)p_{xy}=\widetilde{c}\left(y\right)p_{yx},\;\forall\left(xy\right)\in E.\label{eq:mar1}
\end{equation}
 
\begin{lem}
There is a bijective correspondence between reversible Markov-walks
on the one hand, and conductance functions on the other.\end{lem}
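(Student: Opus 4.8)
The plan is to exhibit two explicit maps, one in each direction, and then verify that they are mutually inverse. In the direction conductance-to-walk, suppose $c$ is a conductance function. I would define transition probabilities by $p_{xy}:=c_{xy}/c\left(x\right)$ as in (\ref{eq:pxy}); these are genuine transition probabilities since $\sum_{y\sim x}p_{xy}=c\left(x\right)/c\left(x\right)=1$ by (\ref{eq:cond}), and assumption (2) guarantees each $c\left(x\right)$ is a finite positive sum. The resulting walk is reversible with reversing function $\widetilde{c}\left(x\right):=c\left(x\right)$: using the symmetry $c_{xy}=c_{yx}$ from Definition \ref{def:cond}, one checks $\widetilde{c}\left(x\right)p_{xy}=c_{xy}=c_{yx}=\widetilde{c}\left(y\right)p_{yx}$, which is exactly (\ref{eq:mar1}).

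In the reverse direction, suppose we are given a reversible walk, i.e.\ transition probabilities $\left(p_{xy}\right)$ together with a positive reversing function $\widetilde{c}$ satisfying (\ref{eq:mar1}). I would set $c_{xy}:=\widetilde{c}\left(x\right)p_{xy}$ for $\left(x,y\right)\in E$. The reversibility relation (\ref{eq:mar1}) says precisely that $c_{xy}=c_{yx}$, while positivity of $\widetilde{c}$ together with $p_{xy}>0$ on edges yields $c_{xy}>0$; hence $c$ is a conductance function in the sense of Definition \ref{def:cond}.

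It then remains to check that the two maps compose to the identity in both orders. Composing walk-to-conductance-to-walk, the key computation is the self-consistency of the reversing function: from $c_{xy}=\widetilde{c}\left(x\right)p_{xy}$ I recover $c\left(x\right)=\sum_{y\sim x}c_{xy}=\widetilde{c}\left(x\right)\sum_{y\sim x}p_{xy}=\widetilde{c}\left(x\right)$ by $\sum_{y}p_{xy}=1$, so the reconstructed probabilities $c_{xy}/c\left(x\right)=\widetilde{c}\left(x\right)p_{xy}/\widetilde{c}\left(x\right)=p_{xy}$ agree with the original, and the recovered reversing function is $\widetilde{c}$ itself. The reverse composition is immediate from the two formulas.

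The point requiring the most care is the status of the reversing function $\widetilde{c}$, which in (\ref{eq:mar1}) is only asserted to exist. On a connected graph (assumption (4)) the relation $\widetilde{c}\left(y\right)/\widetilde{c}\left(x\right)=p_{xy}/p_{yx}$ propagates $\widetilde{c}$ along paths, so $\widetilde{c}$ is determined up to a single positive multiplicative constant; equivalently $c$ and $\lambda c$ induce the same $\left(p_{xy}\right)$, so a conductance is pinned down by the walk only after fixing a scale. The correspondence is therefore cleanest when the reversing function $\widetilde{c}=c\left(\cdot\right)$ is carried along as part of the data of the walk, the identity $\sum_{y}p_{xy}=1$ forcing $\widetilde{c}\left(x\right)=c\left(x\right)$; this normalization is exactly what removes the scaling ambiguity and makes the correspondence bijective.
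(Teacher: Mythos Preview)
Your proof takes essentially the same approach as the paper: the same two maps $c\mapsto p_{xy}=c_{xy}/c(x)$ and $(p,\widetilde{c})\mapsto c_{xy}=\widetilde{c}(x)p_{xy}$, with symmetry of $c$ yielding reversibility and vice versa. You go further than the paper by explicitly verifying the two compositions and by flagging the scaling ambiguity in $\widetilde{c}$; the paper's own proof leaves both of these points implicit.
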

\begin{proof}
If $c$ is a conductance function on $E$, see Definition \ref{def:cond},
then $\left(p_{xy}\right)$, defined in (\ref{eq:pxy}), is a reversible
walk. This follows from $c_{xy}=c_{yx}$. 

Conversely if (\ref{eq:mar1}) holds for a system of transition probabilities
$\left(p_{xy}=\mbox{Prob}\left(x\mapsto y\right)\right)$, then $c_{xy}:=\widetilde{c}\left(x\right)p_{xy}$
is a conductance function, where 
\[
\widetilde{c}\left(x\right)=\sum_{y\sim x}c_{xy}.
\]

\end{proof}
For results on reversible Markov chains, see e.g., \cite{LPP12}.

\section{\label{sec:eframe}Electrical Current as Frame Coefficients}

The role of the graph-network setting $\left(V,E,c,\mathscr{H}_{E}\right)$
introduced above is, in part, to model a family of electrical networks;
one application among others. Here $G$ is a graph with vertices $V$,
and edges $E$. Since we study large networks, it is helpful to take
$V$ infinite, but countable. Think of a network of resistors placed
on the edges in $G$. In this setting, the functions $v_{\left(x,y\right)}$
in $\mathscr{H}_{E}$, indexed by pairs of vertices, represent dipoles.
They measure voltage drop in the network through all possible paths
between the two vertices $x$ and $y$. Now the conduction function
$c$ is given, and so (electrical) current equals the product of conductance
and voltage drop; in this case voltage drop is computed over the paths
made up of edges from $x$ to $y$. For infinite systems $\left(V,E,c\right)$
the corresponding dipoles $v_{xy}$ are \uline{not} in $l^{2}\left(V\right)$,
but they are always in $\mathscr{H}_{E}$; see Lemma \ref{lem:dipole}
below. 

For a fixed function $u$ in $\mathscr{H}_{E}$ (voltage), and $e$
in $E$ we calculate the current $I\left(u,e\right)$, and we show
that these numbers yield \emph{\uline{frame coefficients}} in a
natural Parseval frame (for $\mathscr{H}_{E}$) where the frame vectors
making up the Parseval frame are $v_{e}$, $e=\left(x,y\right)$ in
$E$.

This result will be proved below. For general background references
on frames in Hilbert space, we refer to \cite{HJL13,KLZ09,CM13,SD13,KOPT13,EO13},
and for electrical networks, see \cite{Ana11,Gri10,Zem96,Bar93,Tet91,DS84,NW59}.
The facts on electrical networks we need are the laws of Kirchhoff
and Ohm, and our computation of the frame coefficients as electrical
currents is based on this, in part.
\begin{defn}
Let $\mathscr{H}$ be a Hilbert space with inner product denoted $\left\langle \cdot,\cdot\right\rangle $,
or $\left\langle \cdot,\cdot\right\rangle _{\mathscr{H}}$ when there
is more than one possibility to consider. Let $J$ be a countable
index set, and let $\left\{ w_{j}\right\} _{j\in J}$ be an indexed
family of non-zero vectors in $\mathscr{H}$. We say that $\left\{ w_{j}\right\} _{j\in J}$
is a \emph{\uline{frame}}\emph{ }for $\mathscr{H}$ iff (Def.)
there are two finite positive constants $b_{1}$ and $b_{2}$ such
that
\begin{equation}
b_{1}\left\Vert u\right\Vert _{\mathscr{H}}^{2}\leq\sum_{j\in J}\left|\left\langle w_{j},u\right\rangle _{\mathscr{H}}\right|^{2}\leq b_{2}\left\Vert u\right\Vert _{\mathscr{H}}^{2}\label{eq:en1}
\end{equation}
holds for all $u\in\mathscr{H}$. We say that it is a \emph{\uline{Parseval}}
frame if $b_{1}=b_{2}=1$. \end{defn}
\begin{lem}
\label{lem:eframe}If $\left\{ w_{j}\right\} _{j\in J}$ is a Parseval
frame in $\mathscr{H}$, then the (analysis) operator $A=A_{\mathscr{H}}:\mathscr{H}\longrightarrow l^{2}\left(J\right)$,
\begin{equation}
Au=\left(\left\langle w_{j},u\right\rangle _{\mathscr{H}}\right)_{j\in J}\label{eq:en2}
\end{equation}
is well-defined and isometric. Its adjoint $A^{*}:l^{2}\left(J\right)\longrightarrow\mathscr{H}$
is given by 
\begin{equation}
A^{*}\left(\left(\gamma_{j}\right)_{j\in J}\right):=\sum_{j\in J}\gamma_{j}w_{j}\label{eq:en3}
\end{equation}
and the following hold:
\begin{enumerate}
\item The sum on the RHS in (\ref{eq:en3}) is norm-convergent;
\item $A^{*}:l^{2}\left(J\right)\longrightarrow\mathscr{H}$ is co-isometric;
and for all $u\in\mathscr{H}$, we have 
\begin{equation}
u=A^{*}Au=\sum_{j\in J}\left\langle w_{j},u\right\rangle w_{j}\label{eq:en4}
\end{equation}
where the RHS in (\ref{eq:en4}) is norm-convergent. 
\end{enumerate}
\end{lem}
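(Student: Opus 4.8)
The plan is to derive everything from the single numerical identity furnished by the Parseval hypothesis, namely $\sum_{j\in J}\left|\left\langle w_{j},u\right\rangle \right|^{2}=\left\Vert u\right\Vert _{\mathscr{H}}^{2}$ for all $u\in\mathscr{H}$, which is precisely (\ref{eq:en1}) specialized to $b_{1}=b_{2}=1$. First I would verify that $A$ is well-defined and isometric: the left-hand side of the Parseval identity being finite shows $Au\in l^{2}\left(J\right)$, and reading the identity as $\left\Vert Au\right\Vert _{l^{2}\left(J\right)}^{2}=\left\Vert u\right\Vert _{\mathscr{H}}^{2}$ exhibits $A$ as isometric. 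In particular $A$ is bounded with $\left\Vert A\right\Vert =1$, and it is linear because $u\mapsto\left\langle w_{j},u\right\rangle $ is linear for each fixed $j$ (under the convention on $\left\langle \cdot,\cdot\right\rangle $ that makes $A$ a genuine linear operator).

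Being bounded, $A$ has a bounded adjoint $A^{*}:l^{2}\left(J\right)\to\mathscr{H}$ with $\left\Vert A^{*}\right\Vert =1$. To identify it I would work first on the dense subspace of finitely supported sequences: for such a $\gamma=\left(\gamma_{j}\right)$ and any $u\in\mathscr{H}$, the defining relation for the adjoint gives $\left\langle A^{*}\gamma,u\right\rangle _{\mathscr{H}}=\left\langle \gamma,Au\right\rangle _{l^{2}\left(J\right)}=\sum_{j}\overline{\gamma_{j}}\left\langle w_{j},u\right\rangle =\left\langle \sum_{j}\gamma_{j}w_{j},u\right\rangle _{\mathscr{H}}$, so that $A^{*}\gamma=\sum_{j}\gamma_{j}w_{j}$ holds for finite sequences. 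For general $\gamma\in l^{2}\left(J\right)$ I would truncate to finite subsets $F\subset J$ and invoke continuity: since $\gamma\mathbf{1}_{F}\to\gamma$ in $l^{2}\left(J\right)$ as $F$ exhausts $J$, and $A^{*}$ is continuous, the partial sums $\sum_{j\in F}\gamma_{j}w_{j}=A^{*}\left(\gamma\mathbf{1}_{F}\right)$ converge in the norm of $\mathscr{H}$ to $A^{*}\gamma$. This single argument simultaneously establishes the adjoint formula (\ref{eq:en3}) and the norm-convergence asserted in item (1).

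Finally I would upgrade the isometry of $A$ to the operator identity $A^{*}A=I_{\mathscr{H}}$. The scalar relation $\left\langle A^{*}Au,u\right\rangle =\left\Vert u\right\Vert ^{2}$ is merely the Parseval identity rewritten, so $\left\langle \left(A^{*}A-I\right)u,u\right\rangle =0$ for all $u$; since $A^{*}A-I$ is self-adjoint, polarization forces $A^{*}A=I$. This identity is exactly the statement that $A^{*}$ is co-isometric, because for $T=A^{*}$ one has $TT^{*}=A^{*}A$. Substituting (\ref{eq:en2}) and (\ref{eq:en3}) then turns $u=A^{*}Au$ into the reconstruction formula (\ref{eq:en4}), whose norm-convergence is inherited from the truncation argument above. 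The only points that require genuine care are the conjugation bookkeeping in the adjoint computation and the passage from finite to infinite sums; neither is a serious obstacle, the former being settled by fixing the inner-product convention and the latter by the boundedness of $A^{*}$, so the entire proof reduces to organizing these standard continuity facts around the Parseval identity.
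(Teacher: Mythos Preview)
Your proof is correct and follows the same approach as the paper's own argument. The paper's proof is in fact just a two-line sketch that declares the details ``standard in the theory of frames,'' observes that the Parseval condition is precisely the statement that $A$ is isometric so that $A^{*}A=I_{\mathscr{H}}$, and defers the rest to references; you have simply written out those standard details (adjoint on finitely supported sequences, extension by continuity, polarization) explicitly.
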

\begin{proof}
The details are standard in the theory of frames; see the cited papers
above. Note that (\ref{eq:en1}) for $b_{1}=b_{2}=1$ simply states
that $A$ in (\ref{eq:en2}) is isometric, and so $A^{*}A=I_{\mathscr{H}}=$
the identity operator in $\mathscr{H}$, and $AA^{*}=$ the projection
onto the range of $A$.
\end{proof}
When a conductance function $c:E\rightarrow\mathbb{R}_{+}\cup\left\{ 0\right\} $
is given, we consider the energy Hilbert space $\mathscr{H}_{E}$
(depending on $c$), with norm and inner product:
\begin{align}
\left\langle u,v\right\rangle _{\mathscr{H}_{E}} & :=\frac{1}{2}\underset{\left(x,y\right)\in E}{\sum\sum}c_{xy}\left(\overline{u\left(x\right)}-\overline{u\left(y\right)}\right)\left(v\left(x\right)-v\left(y\right)\right),\mbox{ and}\label{eq:en5}\\
\left\Vert u\right\Vert _{\mathscr{H}_{E}}^{2} & =\left\langle u,u\right\rangle _{\mathscr{H}_{E}}=\frac{1}{2}\underset{\left(x,y\right)\in E}{\sum\sum}c_{xy}\left|u\left(x\right)-u\left(y\right)\right|^{2}<\infty.\label{eq:en6}
\end{align}

We shall assume that $\left(V,E,c\right)$ is \emph{\uline{connected}}
(See Definition \ref{def:cond}); and it is shown that then (\ref{eq:en4})-(\ref{eq:en5})
define $\mathscr{H}_{E}$ as a Hilbert space of functions on $V$;
functions defined modulo constants; see also \secref{lemmas} below.

Further, for any pair of vertices $x,y\in V$, there is a unique dipole
vector $v_{xy}\in\mathscr{H}_{E}$ such that 
\begin{equation}
\left\langle v_{xy},u\right\rangle _{\mathscr{H}_{E}}=u\left(x\right)-u\left(y\right)\label{eq:en7}
\end{equation}
holds for all $u\in\mathscr{H}_{E}$, see Lemma \ref{lem:dipole}
below.
\begin{rem}
We illustrate this Parseval frame in section \ref{ex:tri} with a
finite-dimensional example.
\end{rem}

\subsection{Dipoles}

Let $\left(V,E,c,\mathscr{H}_{E}\right)$ be as described above, and
assume that $\left(V,E,c\right)$ is connected. Note that ``vectors''
in $\mathscr{H}_{E}$ are equivalence classes of functions on $V$
(= the vertex set in the graph $G=\left(V,E\right)$). 
\begin{lem}[\cite{JoPe10}]
\label{lem:dipole}For every pair of vertices $x,y\in V$, there
is a \uline{unique} vector $v_{xy}\in\mathscr{H}_{E}$ satisfying
\begin{equation}
\left\langle u,v_{xy}\right\rangle _{\mathscr{H}_{E}}=u\left(x\right)-u\left(y\right)\label{eq:di1}
\end{equation}
for all $u\in\mathscr{H}_{E}$.\end{lem}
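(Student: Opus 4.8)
The plan is to obtain $v_{xy}$ as the Riesz representation vector of the ``evaluation‑difference'' functional. Define $L_{xy}\colon\mathscr{H}_{E}\rightarrow\mathbb{C}$ by $L_{xy}(u)=u(x)-u(y)$. The first thing I would check is that $L_{xy}$ is well defined on $\mathscr{H}_{E}$: since vectors in $\mathscr{H}_{E}$ are functions on $V$ modulo additive constants, and since $u\mapsto u(x)-u(y)$ annihilates constants, $L_{xy}$ descends to a (manifestly linear) functional on the quotient.

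The crux is boundedness of $L_{xy}$ relative to the energy norm (\ref{eq:en6}). Here I would exploit connectedness (the fourth axiom in the standing hypotheses): fix a finite edge‑path $x=x_{0},x_{1},\dots,x_{n}=y$ with $\left(x_{i-1},x_{i}\right)\in E$, and by pruning assume the edges are distinct. Telescoping, $u(x)-u(y)=\sum_{i=1}^{n}\left(u(x_{i-1})-u(x_{i})\right)$, and Cauchy--Schwarz with weights $c_{x_{i-1}x_{i}}$ gives
\[
\left|u(x)-u(y)\right|^{2}\leq\Bigl(\sum_{i=1}^{n}\tfrac{1}{c_{x_{i-1}x_{i}}}\Bigr)\Bigl(\sum_{i=1}^{n}c_{x_{i-1}x_{i}}\left|u(x_{i-1})-u(x_{i})\right|^{2}\Bigr).
\]
The first factor is a finite constant $K_{xy}=\sum_{i=1}^{n}1/c_{x_{i-1}x_{i}}$ depending only on the chosen path (finite since each $c_{x_{i-1}x_{i}}>0$). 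For the second factor, note that the $\tfrac{1}{2}$ in (\ref{eq:en5})--(\ref{eq:en6}) reflects summation over \emph{ordered} pairs, so that $\left\Vert u\right\Vert_{\mathscr{H}_{E}}^{2}$ equals the sum of $c_{ab}\left|u(a)-u(b)\right|^{2}$ over \emph{undirected} edges; since the path edges are distinct undirected edges, the second factor is dominated by the full energy $\left\Vert u\right\Vert_{\mathscr{H}_{E}}^{2}$. Hence $\left|L_{xy}(u)\right|\leq\sqrt{K_{xy}}\,\left\Vert u\right\Vert_{\mathscr{H}_{E}}$, so $L_{xy}$ is bounded.

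With boundedness in hand I would invoke the Riesz representation theorem on the Hilbert space $\mathscr{H}_{E}$ (completeness being part of the standing setup): there is a unique $v_{xy}\in\mathscr{H}_{E}$ representing $L_{xy}$ through the energy inner product, i.e. $\left\langle u,v_{xy}\right\rangle_{\mathscr{H}_{E}}=u(x)-u(y)$ for all $u$, which is exactly (\ref{eq:di1}). Uniqueness is immediate: two representing vectors would pair to zero against every $u\in\mathscr{H}_{E}$, hence have zero energy, hence coincide as elements of $\mathscr{H}_{E}$. Note also that $v_{xy}$ is independent of the auxiliary path, since $L_{xy}$ is.

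The main obstacle is the boundedness estimate, specifically the guarantee that a single finite path suffices and that its resistance sum $K_{xy}$ controls $L_{xy}$ uniformly across the (possibly infinite) network; this is precisely where connectedness enters. The remaining care is purely bookkeeping with the $\tfrac{1}{2}$ factor, after which the comparison of the second Cauchy--Schwarz factor with $\left\Vert u\right\Vert_{\mathscr{H}_{E}}^{2}$ is transparent.
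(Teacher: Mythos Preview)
Your proof is correct and follows essentially the same route as the paper: define the evaluation-difference functional $L_{xy}$, use connectedness to telescope along a finite edge-path, apply Cauchy--Schwarz with conductance weights to bound $|L_{xy}(u)|$ by a path-resistance constant times $\|u\|_{\mathscr{H}_{E}}$, and then invoke Riesz. Your version is in fact slightly more careful than the paper's in noting well-definedness on the quotient by constants, in pruning to distinct edges before comparing to $\|u\|_{\mathscr{H}_{E}}^{2}$, and in tracking the $\tfrac{1}{2}$ factor.
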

\begin{proof}
Fix a pair of vertices $x,y$ as above, and pick a finite path of
edges $\left(x_{i},x_{i+1}\right)\in E$ such that $c_{x_{i},x_{i+1}}>0$,
and $x_{0}=y$, $x_{n}=x$. Then
\begin{align}
u\left(x\right)-u\left(y\right) & =\sum_{i=0}^{n-1}u\left(x_{i+1}\right)-u\left(x_{i}\right)\nonumber \\
 & =\sum_{i=0}^{n-1}\frac{1}{\sqrt{c_{x_{i}x_{i+1}}}}\sqrt{c_{x_{i}x_{i+1}}}\left(u\left(x_{i+1}\right)-u\left(x_{i}\right)\right);\label{eq:di2}
\end{align}
and, by Schwarz, we have the following estimate:
\begin{align*}
\left|u\left(x\right)-u\left(y\right)\right|^{2} & \leq\left(\sum_{i=0}^{n-1}\frac{1}{c_{x_{i}x_{i+1}}}\right)\sum_{j=0}^{n-1}c_{x_{j}x_{j+1}}\left|u\left(x_{j+1}\right)-u\left(x_{j}\right)\right|^{2}\\
 & \leq\left(\mbox{Const}_{xy}\right)\left\Vert u\right\Vert _{\mathscr{H}_{E}}^{2},
\end{align*}
valid for all $u\in\mathscr{H}_{E}$, where we used (\ref{eq:en6})
in the last step of this \emph{a priori} estimate. But this states
that the linear functional:
\begin{equation}
L_{xy}:\mathscr{H}_{E}\ni u\longmapsto u\left(x\right)-u\left(y\right)\label{eq:di3}
\end{equation}
is continuous on $\mathscr{H}_{E}$ w.r.t. the norm $\left\Vert \cdot\right\Vert _{\mathscr{H}_{E}}$.
Hence existence and uniqueness for $v_{xy}\in\mathscr{H}_{E}$ follows
from Riesz' theorem. We get $\exists!v_{xy}\in\mathscr{H}_{E}$ s.t.
\[
L_{xy}\left(u\right)=\left\langle v_{xy},u\right\rangle _{\mathscr{H}_{E}},\;\forall u\in\mathscr{H}_{E}.
\]
\end{proof}
\begin{rem}
\label{rem:harm}Let $x,y\in V$ be as above, and let $v_{xy}\in\mathscr{H}_{E}$
be the dipole. One checks, using (\ref{eq:cond})-(\ref{eq:lap})
that
\begin{equation}
\Delta v_{xy}=\delta_{x}-\delta_{y}.\label{eq:di4}
\end{equation}
But this equation (\ref{eq:di4}) does \uline{not} determine $v_{xy}$
uniquely. Indeed if $w$ is a function on $V$ satisfying $\Delta w=0$,
i.e., $w$ is \uline{harmonic}, then $v_{xy}+w$ also satisfies
eq. (\ref{eq:di4}). (In \cite{JoPe11a,JoPe11b} we studied when $\left(V,E,c\right)$
has non-constant harmonic functions in $\mathscr{H}_{E}$.)

The system of vectors $v_{xy}$ in (\ref{eq:en7}) indexed by pairs
of vertices carry a host of information about the given system $\left(V,E,c,\mathscr{H}_{E}\right)$,
for example the computation of \emph{\uline{resistance metric}}.\end{rem}
\begin{lem}
\label{lem:metric}When $c$ (conductance) is given and assume $\left(V,c\right)$
is connected; set
\begin{equation}
d_{c}\left(x,y\right):=\sup\left\{ \frac{1}{\left\Vert u\right\Vert _{\mathscr{H}_{E}}^{2}}\:\Big|\: u\in\mathscr{H}_{E},u\left(x\right)=1,u\left(y\right)=0\right\} \label{eq:en1-1}
\end{equation}
then $d_{c}\left(x,y\right)$ is a metric on $V$, and 
\begin{equation}
d_{c}\left(x,y\right)=\left\Vert v_{xy}\right\Vert _{\mathscr{H}_{E}}^{2}\label{eq:en1-2}
\end{equation}
where the \uline{dipole vectors}\emph{ are specified as in }(\ref{eq:en7}). \end{lem}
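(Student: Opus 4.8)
The plan is to prove the identity (\ref{eq:en1-2}) first, and then to extract the three metric axioms from it together with two elementary algebraic identities for the dipoles. For (\ref{eq:en1-2}) I would read the right-hand side of (\ref{eq:en1-1}) as $1/\inf\{\Vert u\Vert_{\mathscr{H}_E}^2 : u(x)-u(y)=1\}$; this reformulation is legitimate because $\Vert\cdot\Vert_{\mathscr{H}_E}$ ignores additive constants, so any $u$ with $u(x)-u(y)=1$ can be shifted to meet the constraints $u(x)=1$, $u(y)=0$. The defining relation (\ref{eq:di1}) rewrites the constraint as $\langle u,v_{xy}\rangle_{\mathscr{H}_E}=1$, and Cauchy--Schwarz gives $1\le \Vert u\Vert_{\mathscr{H}_E}\Vert v_{xy}\Vert_{\mathscr{H}_E}$, i.e.\ $1/\Vert u\Vert_{\mathscr{H}_E}^2\le \Vert v_{xy}\Vert_{\mathscr{H}_E}^2$, which is the bound $d_c(x,y)\le \Vert v_{xy}\Vert_{\mathscr{H}_E}^2$. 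Equality is realized at the explicit optimizer $u_0:=v_{xy}/\Vert v_{xy}\Vert_{\mathscr{H}_E}^2$: by (\ref{eq:di1}) with $u=v_{xy}$ one has $\langle v_{xy},v_{xy}\rangle=v_{xy}(x)-v_{xy}(y)$, so after normalizing the representative by $v_{xy}(y)=0$ we get $u_0(x)=1$, $u_0(y)=0$ and $\Vert u_0\Vert_{\mathscr{H}_E}^2=1/\Vert v_{xy}\Vert_{\mathscr{H}_E}^2$, giving the reverse bound (the case $x=y$ being $d_c=0=\Vert v_{xx}\Vert_{\mathscr{H}_E}^2$).

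Next, using the uniqueness in Lemma \ref{lem:dipole} I would record $v_{yx}=-v_{xy}$ and the cocycle identity $v_{xz}=v_{xy}+v_{yz}$: both hold because the functionals $u\mapsto u(x)-u(y)$ add in the obvious way and uniqueness forces the representing vectors to match. Symmetry $d_c(x,y)=d_c(y,x)$ is then immediate from $\Vert v_{yx}\Vert_{\mathscr{H}_E}=\Vert v_{xy}\Vert_{\mathscr{H}_E}$; nonnegativity is clear; and $d_c(x,y)=0\Leftrightarrow v_{xy}=0\Leftrightarrow x=y$, where the nondegeneracy for $x\neq y$ uses connectedness (the functional $L_{xy}$ in (\ref{eq:di3}) is not identically zero).

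The heart of the matter is the triangle inequality. Via $v_{xz}=v_{xy}+v_{yz}$ and the expansion $\Vert v_{xz}\Vert_{\mathscr{H}_E}^2=\Vert v_{xy}\Vert_{\mathscr{H}_E}^2+2\langle v_{xy},v_{yz}\rangle_{\mathscr{H}_E}+\Vert v_{yz}\Vert_{\mathscr{H}_E}^2$, the inequality $d_c(x,z)\le d_c(x,y)+d_c(y,z)$ reduces to the single sign assertion $\langle v_{xy},v_{yz}\rangle_{\mathscr{H}_E}\le 0$. Applying (\ref{eq:di1}) once more, $\langle v_{xy},v_{yz}\rangle_{\mathscr{H}_E}=v_{xy}(y)-v_{xy}(z)$, so with the representative normalized by $v_{xy}(y)=0$ this is exactly the claim $v_{xy}(z)\ge 0$ for every $z\in V$. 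I would obtain this from the maximum principle: by (\ref{eq:di4}) the dipole satisfies $\Delta v_{xy}=\delta_x-\delta_y$, hence $v_{xy}$ is harmonic at every vertex other than $x,y$, equal there to the $c$-weighted average of its neighbors, while at $x$ it strictly exceeds and at $y$ it strictly falls below that average; consequently $v_{xy}$ attains its minimum $0$ at $y$ and its maximum $\Vert v_{xy}\Vert_{\mathscr{H}_E}^2$ at $x$, so $0\le v_{xy}(z)\le \Vert v_{xy}\Vert_{\mathscr{H}_E}^2$.

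I expect this maximum-principle step to be the main obstacle: on an infinite network the extrema may a priori escape to infinity, so the clean finite-graph argument must be supplemented. The most transparent remedies are the probabilistic representation $v_{xy}(z)=\Vert v_{xy}\Vert_{\mathscr{H}_E}^2\,\mathbb{P}_z(\tau_x<\tau_y)$ for the reversible walk of Remark \ref{rem:rw}, which manifestly lies in $[0,\Vert v_{xy}\Vert_{\mathscr{H}_E}^2]$, or a truncation/Dirichlet-principle argument exploiting that $u_0$ minimizes $\Vert\cdot\Vert_{\mathscr{H}_E}^2$ subject to its values at $x$ and $y$, so replacing $v_{xy}$ by its nonnegative part cannot increase the energy. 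Either route yields $v_{xy}(z)\ge 0$, hence the desired sign of the cross term and the triangle inequality.
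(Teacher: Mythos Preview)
Your argument for the identity \eqref{eq:en1-2} is exactly the paper's: reformulate the constraint as $\langle u,v_{xy}\rangle_{\mathscr{H}_E}=1$, apply Cauchy--Schwarz, and exhibit the optimizer. The paper then simply asserts that ``(\ref{eq:en1-1}) is known to yield a metric'' and stops, so on that part there is nothing to compare.

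Where you go further is in supplying a proof of the metric axioms, which the paper defers to the literature. Your route---use the cocycle relation $v_{xz}=v_{xy}+v_{yz}$ to reduce the triangle inequality to $\langle v_{xy},v_{yz}\rangle_{\mathscr{H}_E}\le 0$, and then to the pointwise sign $v_{xy}(z)\ge 0$---is the standard and correct one. Among the three justifications you list for the sign, the Dirichlet/contraction argument is the one that works cleanly in the present generality: since $u_0=v_{xy}/\Vert v_{xy}\Vert_{\mathscr{H}_E}^2$ minimizes $\Vert\cdot\Vert_{\mathscr{H}_E}^2$ over $\{u:u(x)=1,\ u(y)=0\}$, and the normal contraction $t\mapsto\max(0,\min(t,1))$ does not increase energy, the truncated function is another admissible competitor with no larger energy, forcing $0\le u_0\le 1$. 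The finite-graph maximum principle, as you already flag, does not transfer directly. The probabilistic formula $v_{xy}(z)=\Vert v_{xy}\Vert_{\mathscr{H}_E}^2\,\mathbb{P}_z(\tau_x<\tau_y)$ is more delicate here: when $\mathscr{H}_E$ contains nonconstant harmonic functions (cf.\ Remark~\ref{rem:harm}), the $\mathscr{H}_E$-dipole may carry a harmonic component that the hitting-probability expression does not see, so that identity need not hold as stated. Since you offer the contraction argument as an alternative, this is not a gap, but I would lead with it rather than the random-walk representation.
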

\begin{proof}
Consider $u\in\mathscr{H}_{E}$ as in (\ref{eq:en1-1}), i.e., $u\left(x\right)=1$,
$u\left(y\right)=0$. Using (\ref{eq:en7}) and the Schwarz inequality,
we then get
\begin{align*}
1 & =u\left(x\right)-u\left(y\right)=\left|\left\langle v_{xy},u\right\rangle _{\mathscr{H}_{E}}\right|^{2}\\
 & \leq\left\Vert v_{xy}\right\Vert _{\mathscr{H}_{E}}^{2}\left\Vert u\right\Vert _{\mathscr{H}_{E}}^{2}.\;\left(\mbox{by Schwarz}\right)
\end{align*}
Since we know the optimizing vectors in the Schwarz inequality, the
desired formula (\ref{eq:en1-2}) now follows from (\ref{eq:en1-1})
and (\ref{eq:en7}). 

But (\ref{eq:en1-1}) is known to yield a metric and the lemma follows.
\end{proof}
The next lemma offers a lower bound for the resistance metric between
any two vertices when $\left(V,E,c\right)$ is given: Given any two
vertices $x$ and $y$, we prove the following estimate: $\mbox{dist}_{c}\left(x,y\right)\geq$
sum of dissipation along any path of edges from $x$ to $y$.
\begin{lem}
Let $G=\left(V,E,c\right)$ be as before. For all finite paths $x_{0}:=x\rightarrow\left(e_{i}\right)\rightarrow x_{n}:=y$,
we have 
\begin{equation}
\mbox{dist}_{c}\left(x,y\right)\geq\underset{\mbox{dissipation}}{\underbrace{\sum_{i=0}^{n-1}\mbox{Res}_{x_{i}x_{i+1}}\left|I\left(v_{xy}\right)_{i,i+1}\right|^{2}}};\label{eq:en3-1}
\end{equation}
where $\mbox{Res}=\frac{1}{c}$ denotes the resistance. \end{lem}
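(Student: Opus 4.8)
The plan is to reduce the asserted inequality \eqref{eq:en3-1} to the elementary fact that the full energy sum defining $\left\Vert v_{xy}\right\Vert _{\mathscr{H}_{E}}^{2}$ dominates the partial sum taken over the edges of any single path, simply because every term in the energy sum is non-negative. First I would write the current through the edge $\left(x_{i},x_{i+1}\right)$ induced by the voltage $v_{xy}$ by Ohm's law (conductance times voltage drop), as already recorded in the opening of this section:
\begin{equation}
I\left(v_{xy}\right)_{i,i+1}=c_{x_{i}x_{i+1}}\left(v_{xy}\left(x_{i}\right)-v_{xy}\left(x_{i+1}\right)\right).
\end{equation}
Since $\mbox{Res}_{x_{i}x_{i+1}}=1/c_{x_{i}x_{i+1}}$, the dissipation contributed by this edge simplifies to
\begin{equation}
\mbox{Res}_{x_{i}x_{i+1}}\left|I\left(v_{xy}\right)_{i,i+1}\right|^{2}=c_{x_{i}x_{i+1}}\left|v_{xy}\left(x_{i}\right)-v_{xy}\left(x_{i+1}\right)\right|^{2},
\end{equation}
so the right-hand side of \eqref{eq:en3-1} is exactly $\sum_{i=0}^{n-1}c_{x_{i}x_{i+1}}\left|v_{xy}\left(x_{i}\right)-v_{xy}\left(x_{i+1}\right)\right|^{2}$.

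Next I would use Lemma~\ref{lem:metric}, specifically \eqref{eq:en1-2}, to identify $\mbox{dist}_{c}\left(x,y\right)=\left\Vert v_{xy}\right\Vert _{\mathscr{H}_{E}}^{2}$, and then expand this norm through the defining formula \eqref{eq:en6}:
\begin{equation}
\left\Vert v_{xy}\right\Vert _{\mathscr{H}_{E}}^{2}=\frac{1}{2}\underset{\left(a,b\right)\in E}{\sum\sum}c_{ab}\left|v_{xy}\left(a\right)-v_{xy}\left(b\right)\right|^{2}.
\end{equation}
Because $E$ is symmetric under $\left(a,b\right)\mapsto\left(b,a\right)$ and the summand is symmetric in $a,b$, the prefactor $\tfrac{1}{2}$ precisely undoes the double counting, so the expression equals the sum $\sum_{\{a,b\}}c_{ab}\left|v_{xy}\left(a\right)-v_{xy}\left(b\right)\right|^{2}$ over unordered edges. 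The decisive step is then the observation that, for a path traversing each edge at most once, the edges $\left\{ x_{i},x_{i+1}\right\} $ for $0\le i\le n-1$ form a subset of the unordered edge set; since every summand is non-negative, discarding the edges off the path can only lower the total, yielding
\begin{equation}
\left\Vert v_{xy}\right\Vert _{\mathscr{H}_{E}}^{2}\ge\sum_{i=0}^{n-1}c_{x_{i}x_{i+1}}\left|v_{xy}\left(x_{i}\right)-v_{xy}\left(x_{i+1}\right)\right|^{2},
\end{equation}
which is \eqref{eq:en3-1}.

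The point that I expect to need the most care is the bookkeeping of edge multiplicities. The subset argument uses that the chosen path is a \emph{trail}, with no repeated edges: a back-and-forth traversal of a single edge would contribute its dissipation twice on the right while the energy sum counts it once, so the naive inequality could fail. I would handle this either by restricting to paths with distinct edges --- the only case relevant to a lower bound for the resistance metric, and the usual convention in the electrical-network literature --- or, to keep full generality, by first deleting any back-tracking excursion from the path, which only decreases the right-hand side while leaving its endpoints $x,y$ unchanged. A minor point, settled at once, is the orientation convention for $I\left(v_{xy}\right)_{i,i+1}$: since only $\left|I\right|^{2}$ enters, the sign is immaterial.
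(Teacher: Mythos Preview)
Your proof is correct and follows essentially the same approach as the paper: identify $\mbox{dist}_{c}(x,y)=\Vert v_{xy}\Vert_{\mathscr{H}_{E}}^{2}$ via Lemma~\ref{lem:metric}, expand this as a non-negative sum over all edges, and drop the off-path terms. The paper phrases the expansion through the Parseval-frame identity $\Vert v_{xy}\Vert_{\mathscr{H}_{E}}^{2}=\sum_{e}\left|\langle w_{e},v_{xy}\rangle_{\mathscr{H}_{E}}\right|^{2}$ (with $w_{e}=\sqrt{c_{e}}\,v_{e}$) rather than directly via \eqref{eq:en6}, but this is the same computation; your added remark on repeated edges is a point the paper leaves implicit.
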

\begin{proof}
In general there are many paths from $x$ to $y$ when $x$ and $y$
are fixed vertices. See Fig \ref{fig:pth}. 
\begin{align*}
\left\Vert v_{xy}\right\Vert _{\mathscr{H}_{E}}^{2} & =\mbox{dist}_{c}\left(x,y\right)=\sum_{e\in E^{\left(dir\right)}}\left|\left\langle w_{e},v_{xy}\right\rangle _{\mathscr{H}_{E}}\right|^{2}\quad\left(w_{e}:=\sqrt{c_{e}}v_{e}\right)\\
 & =\sum_{e\in E^{\left(dir\right)}}c_{c}\left|\left\langle v_{e},v_{xy}\right\rangle _{\mathscr{H}_{E}}\right|^{2}\\
 & \geq\sum_{i=0}^{n-1}c_{i,i+1}\left|v_{xy}\left(x_{i}\right)-v_{xy}\left(x_{i+1}\right)\right|^{2}\\
 & =\sum_{i=0}^{n-1}\mbox{Res}_{x_{i}x_{i+1}}\left|I\left(v_{xy}\right)_{x_{i}x_{i+1}}\right|^{2}.
\end{align*}

\end{proof}
\begin{figure}[H]
\includegraphics[scale=0.5]{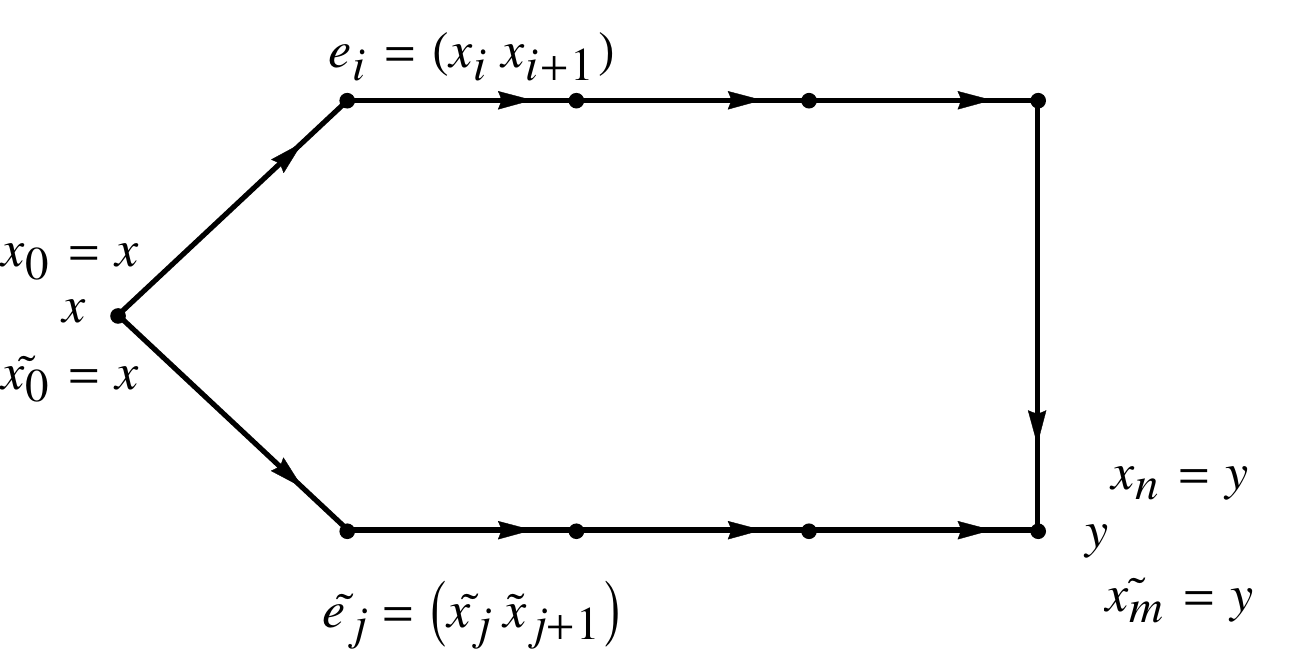}
\[
v_{xy}=\sum_{i=0}^{n-1}v_{i,i+1}=\sum_{j=0}^{m-1}v_{j,j+1}
\]

\protect\caption{\label{fig:pth}Two finite paths connecting $x$ and $y$, where $e_{i}=\left(x_{i}x_{i+1}\right),\widetilde{e_{j}}=\left(\widetilde{x}_{j}\widetilde{x}_{j+1}\right)\in E$.}

\end{figure}

Now pick an orientation for each edge, and denote by $E^{\left(ori\right)}$
the set of oriented edges. 
\begin{thm}
\label{thm:eframe}Let $\left(V,E,c,E^{\left(ori\right)}\right)$
and $\mathscr{H}_{E}$ be as above; then the system of vectors
\begin{equation}
w_{xy}:=\sqrt{c_{xy}}v_{xy},\;\mbox{indexed by \ensuremath{\left(xy\right)\in E^{\left(ori\right)}}}\label{eq:en8}
\end{equation}
is a Parseval frame for $\mathscr{H}_{E}$.\end{thm}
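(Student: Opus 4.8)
The plan is to verify the Parseval identity
\[
\sum_{(xy)\in E^{(ori)}}\left|\left\langle w_{xy},u\right\rangle_{\mathscr{H}_{E}}\right|^{2}=\left\Vert u\right\Vert_{\mathscr{H}_{E}}^{2}
\]
directly for every $u\in\mathscr{H}_{E}$. By the definition of a Parseval frame (the case $b_{1}=b_{2}=1$ of (\ref{eq:en1})), this single equality \emph{is} the assertion to be proved, and it then yields the reconstruction formula and the totality of $\{w_{xy}\}$ for free via Lemma \ref{lem:eframe}. So the whole theorem reduces to evaluating one sum.

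First I would compute the frame coefficients using the reproducing property of the dipoles. By Lemma \ref{lem:dipole}, equation (\ref{eq:di1}), we have $\left\langle u,v_{xy}\right\rangle_{\mathscr{H}_{E}}=u(x)-u(y)$ for all $u$, so from the definition (\ref{eq:en8}) of $w_{xy}$,
\[
\left\langle w_{xy},u\right\rangle_{\mathscr{H}_{E}}=\sqrt{c_{xy}}\,\left\langle v_{xy},u\right\rangle_{\mathscr{H}_{E}}=\sqrt{c_{xy}}\,\overline{\left(u(x)-u(y)\right)},
\]
whence $\left|\left\langle w_{xy},u\right\rangle_{\mathscr{H}_{E}}\right|^{2}=c_{xy}\,\left|u(x)-u(y)\right|^{2}$. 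This step also records that each $w_{xy}$ is a \emph{nonzero} vector, as required by the frame definition, since $c_{xy}>0$ on edges and $\left\Vert v_{xy}\right\Vert_{\mathscr{H}_{E}}^{2}=d_{c}(x,y)>0$ for $x\neq y$ by Lemma \ref{lem:metric}.

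The remaining step is the orientation bookkeeping. Summing the last display over $(xy)\in E^{(ori)}$ counts each geometric edge exactly once, whereas the energy norm (\ref{eq:en6}) sums $c_{xy}\left|u(x)-u(y)\right|^{2}$ over \emph{all} ordered pairs $(x,y)\in E$ and divides by $2$. Because the summand is symmetric under $x\leftrightarrow y$ (using $c_{xy}=c_{yx}$ from Definition \ref{def:cond} and $\left|u(x)-u(y)\right|=\left|u(y)-u(x)\right|$), the choice of one representative per edge precisely cancels the factor $\tfrac{1}{2}$, giving
\[
\sum_{(xy)\in E^{(ori)}}c_{xy}\left|u(x)-u(y)\right|^{2}=\frac{1}{2}\underset{(x,y)\in E}{\sum\sum}c_{xy}\left|u(x)-u(y)\right|^{2}=\left\Vert u\right\Vert_{\mathscr{H}_{E}}^{2},
\]
and combining this with the coefficient computation establishes the Parseval identity.

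The main obstacle is mild: it is exactly this orientation/factor-of-$\tfrac{1}{2}$ matching, where one must confirm that the frame coefficient $c_{xy}\left|u(x)-u(y)\right|^{2}$ is independent of which of the two orientations of an edge is selected for $E^{(ori)}$, so that summing over $E^{(ori)}$ is well defined and reproduces the symmetrized energy sum. No analytic difficulties intervene: the displayed sums are literally the defining energy sum of (\ref{eq:en6}), so convergence is already guaranteed by $u\in\mathscr{H}_{E}$, and no separate totality argument is needed since a Parseval identity forces it.
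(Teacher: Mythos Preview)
Your proof is correct and follows essentially the same route as the paper's own argument: both compute $\left|\left\langle w_{xy},u\right\rangle_{\mathscr{H}_{E}}\right|^{2}=c_{xy}\left|u(x)-u(y)\right|^{2}$ via the dipole property (\ref{eq:en7}) and then recognize the sum over $E^{(ori)}$ as the energy norm (\ref{eq:en6}). Your version is slightly more explicit about the orientation/factor-of-$\tfrac{1}{2}$ bookkeeping and the nonvanishing of the frame vectors, but the underlying computation is identical.
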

\begin{proof}
We will show that (\ref{eq:en1}) holds for constants $b_{1}=b_{2}=1$
for the vectors $\left(w_{\left(xy\right)}\right)_{\left(xy\right)\in E^{\left(ori\right)}}$,
see (\ref{eq:en7})-(\ref{eq:en8}).

Indeed we have for $u\in\mathscr{H}_{E}$:
\begin{eqnarray*}
\left\Vert u\right\Vert _{\mathscr{H}_{E}}^{2} & \underset{\left(\text{by \ensuremath{\left(\ref{eq:en6}\right)}}\right)}{=} & \sum_{\left(xy\right)\in E^{\left(ori\right)}}c_{xy}\left|u\left(x\right)-u\left(y\right)\right|^{2}\\
 & \underset{\left(\text{by \ensuremath{\left(\ref{eq:en7}\right)}}\right)}{=} & \sum_{\left(xy\right)\in E^{\left(ori\right)}}c_{xy}\left|\left\langle v_{xy},u\right\rangle _{\mathscr{H}_{E}}\right|^{2}\\
 & = & \sum_{\left(xy\right)\in E^{\left(ori\right)}}\left|\left\langle \sqrt{c_{xy}}v_{xy},u\right\rangle _{\mathscr{H}_{E}}\right|^{2}\\
 & \underset{\left(\text{by \ensuremath{\left(\ref{eq:en8}\right)}}\right)}{=} & \sum_{\left(xy\right)\in E^{\left(ori\right)}}\left|\left\langle w_{xy},u\right\rangle _{\mathscr{H}_{E}}\right|^{2}
\end{eqnarray*}
which is the desired conclusion.\end{proof}
\begin{rem}
While the vectors $w_{xy}:=\sqrt{c_{xy}}v_{xy}$, $\left(xy\right)\in E^{\left(ori\right)}$,
form a Parseval frame in $\mathscr{H}_{E}$ in the general case, typically
this frame is not an orthogonal basis (ONB) in $\mathscr{H}_{E}$;
although it is in Example \ref{ex:1d} below.

To see when our Parseval frames are in fact ONBs, we use the following:\end{rem}
\begin{lem}
Let $\left\{ w_{j}\right\} _{j\in J}$ be a Parseval frame in a Hilbert
space $\mathscr{H}$, then $\left\Vert w_{j}\right\Vert _{\mathscr{H}_{E}}\leq1$,
and it is an ONB in $\mathscr{H}$ if and only if $\left\Vert w_{j}\right\Vert _{\mathscr{H}}=1$
for all $j\in J$.\end{lem}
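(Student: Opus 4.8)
The plan is to run the Parseval identity $\left\Vert u\right\Vert _{\mathscr{H}}^{2}=\sum_{j\in J}\left|\left\langle w_{j},u\right\rangle _{\mathscr{H}}\right|^{2}$ (which is exactly (\ref{eq:en1}) with $b_{1}=b_{2}=1$, valid for all $u\in\mathscr{H}$) backwards, by feeding a single frame vector into it. First I would fix $k\in J$ and substitute $u=w_{k}$, then split off the diagonal term $j=k$ from the rest of the sum:
\[
\left\Vert w_{k}\right\Vert _{\mathscr{H}}^{2}=\left|\left\langle w_{k},w_{k}\right\rangle _{\mathscr{H}}\right|^{2}+\sum_{j\neq k}\left|\left\langle w_{j},w_{k}\right\rangle _{\mathscr{H}}\right|^{2}=\left\Vert w_{k}\right\Vert _{\mathscr{H}}^{4}+\sum_{j\neq k}\left|\left\langle w_{j},w_{k}\right\rangle _{\mathscr{H}}\right|^{2}.
\]
In particular $\left\Vert w_{k}\right\Vert _{\mathscr{H}}^{2}\geq\left\Vert w_{k}\right\Vert _{\mathscr{H}}^{4}$. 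Since frame vectors are non-zero by definition, $\left\Vert w_{k}\right\Vert _{\mathscr{H}}^{2}>0$, so I may divide through to obtain $\left\Vert w_{k}\right\Vert _{\mathscr{H}}\leq1$; this settles the first assertion.

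For the equivalence, the forward implication is trivial: an orthonormal basis consists of unit vectors, so $\left\Vert w_{j}\right\Vert _{\mathscr{H}}=1$ for all $j$. For the converse, I would assume $\left\Vert w_{j}\right\Vert _{\mathscr{H}}=1$ for every $j$ and return to the displayed identity. With $\left\Vert w_{k}\right\Vert _{\mathscr{H}}=1$ it reads $1=1+\sum_{j\neq k}\left|\left\langle w_{j},w_{k}\right\rangle _{\mathscr{H}}\right|^{2}$, forcing $\sum_{j\neq k}\left|\left\langle w_{j},w_{k}\right\rangle _{\mathscr{H}}\right|^{2}=0$, hence $\left\langle w_{j},w_{k}\right\rangle _{\mathscr{H}}=0$ for all $j\neq k$. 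Thus $\left\{ w_{j}\right\} $ is an orthonormal system. To upgrade orthonormality to a basis I would invoke the reconstruction formula (\ref{eq:en4}) from Lemma \ref{lem:eframe}, namely $u=\sum_{j}\left\langle w_{j},u\right\rangle _{\mathscr{H}}w_{j}$ for every $u\in\mathscr{H}$; this shows the orthonormal family is total, and an orthonormal total family is precisely an ONB.

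The only step that carries any content is the first one: applying the Parseval identity to a frame element itself and recognizing the diagonal contribution as $\left\Vert w_{k}\right\Vert _{\mathscr{H}}^{4}$. Everything downstream is bookkeeping. I do not expect a genuine obstacle here; the one point requiring care is to record that the frame vectors are non-zero (built into the definition of frame in the excerpt), since that is exactly what licenses the division yielding $\left\Vert w_{k}\right\Vert _{\mathscr{H}}\leq1$ and what makes the equality case $\left\Vert w_{k}\right\Vert _{\mathscr{H}}=1$ meaningful.
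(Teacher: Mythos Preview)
Your proof is correct and follows exactly the approach the paper indicates: the paper's proof consists of the single hint ``Plug in $w_{j_{0}}$ for $u$ in (\ref{eq:en2-1}),'' and you have carried out precisely that substitution and the bookkeeping that follows.
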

\begin{proof}
Follows from an easy application of 
\begin{equation}
\left\Vert u\right\Vert _{\mathscr{H}}^{2}=\sum_{j\in J}\left|\left\langle w_{j},u\right\rangle _{\mathscr{H}}\right|^{2},\; u\in\mathscr{H}.\label{eq:en2-1}
\end{equation}
Plug in $w_{j_{0}}$ for $u$ in (\ref{eq:en2-1}).\end{proof}
\begin{rem}
Frames in $\mathscr{H}_{E}$ consisting of our system (\ref{eq:en8})
are not ONBs when resisters are configured in non-linear systems of
vertices, for example, resisters in parallel. See Fig \ref{fig:frame},
and Example \ref{ex:tri}.

\begin{figure}[H]
\begin{tabular}[t]{>{\centering}p{0.45\textwidth}>{\centering}p{0.45\textwidth}}
\includegraphics[scale=0.5]{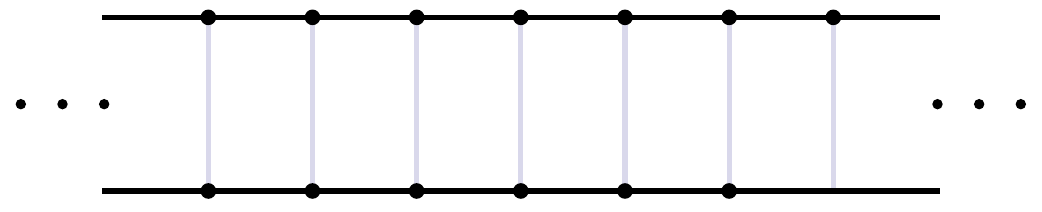}\\
 & \includegraphics[scale=0.45]{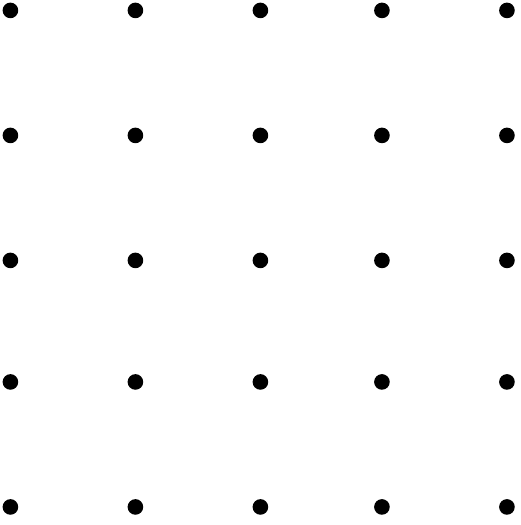}\\
\tabularnewline
$V=\mbox{Band}$ & $V=\mathbb{Z}^{2}$\tabularnewline
\end{tabular}

\protect\caption{\label{fig:frame}non-linear system of vertices}

\end{figure}

In these examples one checks that 
\begin{equation}
1>\left\Vert w_{xy}\right\Vert _{\mathscr{H}_{E}}^{2}=c_{xy}\left\Vert v_{xy}\right\Vert _{\mathscr{H}_{E}}^{2}=c_{xy}\left(v_{xy}\left(x\right)-v_{xy}\left(y\right)\right).\label{eq:en2-2}
\end{equation}
That is the current flowing through each edge $e=\left(x,y\right)\in E$
is $<1$; or equivalently the voltage-drop across $e$ is $<$ resistance
\[
v_{xy}\left(x\right)-v_{xy}\left(y\right)<\frac{1}{c_{cy}}=\mbox{resistance.}
\]
\end{rem}
\begin{defn}
\label{def:ori}Let $V,E,c,\mathscr{H}_{E}$ be as above and for $u\in\mathscr{H}_{E}$,
set 
\begin{equation}
I\left(u\right)_{\left(xy\right)}:=c_{xy}\left(u\left(x\right)-u\left(y\right)\right)\;\mbox{for \ensuremath{\left(x,y\right)\in E}}.\label{eq:en9}
\end{equation}
By Ohm's law, the function $I\left(u\right)_{\left(xy\right)}$ in
(\ref{eq:en9}) represents the current in a network. 

A choice of \emph{\uline{orientation}} may be assigned as follows
(three different ways):
\begin{enumerate}
\item The orientation of every $\left(xy\right)\in E$ may be chosen arbitrarily.
\item The orientation may be suggested by geometry; for example in a binary
tree, as shown in Fig \ref{fig:btree} below. 
\item \label{enu:ori3}Or the orientation may be assigned by the experiment
of inserting one Amp at a vertex, say $o\in V$, and extracting one
Amp at a distinct vertex, say $x_{dist}\in V$. We then say that an
edge $\left(xy\right)\in E$ is positively oriented if $I\left(u\right)_{xy}>0$
where $I\left(u\right)_{xy}$ is the induced current; see (\ref{eq:en9}).
See Fig \ref{fig:etree} below.
\end{enumerate}
\end{defn}
\begin{figure}[H]
\includegraphics[scale=0.6]{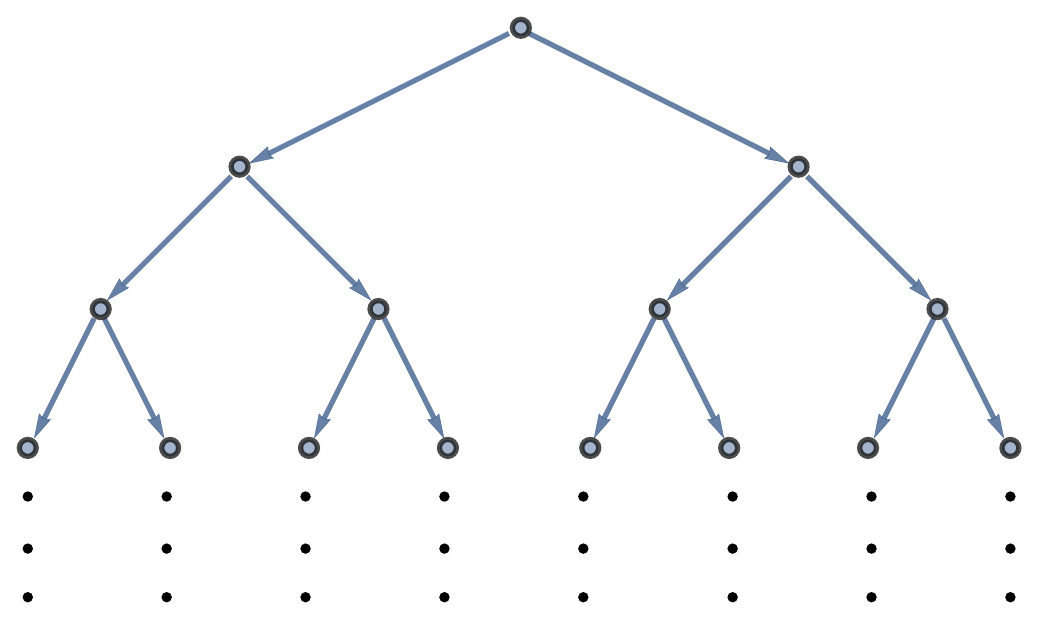}

\protect\caption{\label{fig:btree}Geometric orientation. (See also Fig \ref{fig:bt}
below. )}

\end{figure}

\begin{figure}[H]
\includegraphics[scale=0.6]{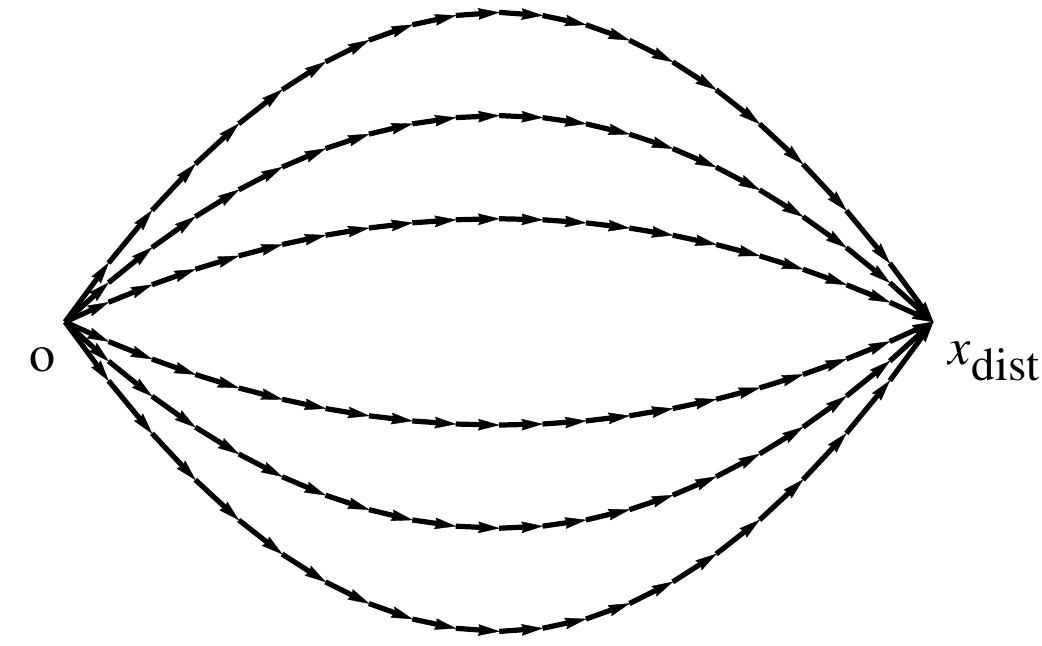}

\protect\caption{\label{fig:etree}Electrically induced orientation.}
\end{figure}

\begin{cor}
Let $\left(V,E,c,\mathscr{H}_{E}\right)$ be as above, and let $E^{\left(ori\right)}$
be assigned as in (\ref{enu:ori3}) of Definition \ref{def:ori}.
Then every $u\in\mathscr{H}_{E}$ has a norm-convergent representation
\begin{equation}
u=\sum_{\left(xy\right)\in E^{\left(ori\right)}}I\left(u\right)_{xy}v_{xy}.\label{eq:en10}
\end{equation}
\end{cor}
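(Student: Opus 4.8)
The plan is to read (\ref{eq:en10}) off directly from the Parseval frame reconstruction formula, applied to the frame produced in Theorem \ref{thm:eframe}. First I would invoke Theorem \ref{thm:eframe} to record that the system $\left\{ w_{xy}\right\} _{\left(xy\right)\in E^{\left(ori\right)}}$, with $w_{xy}=\sqrt{c_{xy}}\,v_{xy}$, is a Parseval frame for $\mathscr{H}_{E}$. By part (2) of Lemma \ref{lem:eframe} (equation (\ref{eq:en4})), every $u\in\mathscr{H}_{E}$ then admits the norm-convergent expansion $u=\sum_{\left(xy\right)\in E^{\left(ori\right)}}\left\langle w_{xy},u\right\rangle _{\mathscr{H}_{E}}w_{xy}$. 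This already supplies both the representation and the asserted convergence, so the only remaining task is to identify the frame coefficients with the currents $I\left(u\right)_{xy}$.

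Next I would compute those coefficients. Using (\ref{eq:en8}) and then the dipole identity (\ref{eq:en7}), $\left\langle w_{xy},u\right\rangle _{\mathscr{H}_{E}}=\sqrt{c_{xy}}\left\langle v_{xy},u\right\rangle _{\mathscr{H}_{E}}=\sqrt{c_{xy}}\left(u\left(x\right)-u\left(y\right)\right)$, where the scalar $\sqrt{c_{xy}}$ passes out of the first slot unchanged because it is real and positive. The single point of substance is that when this coefficient is multiplied back against $w_{xy}=\sqrt{c_{xy}}\,v_{xy}$, the two factors of $\sqrt{c_{xy}}$ coalesce into one factor $c_{xy}$, so the $\left(xy\right)$-summand becomes $\sqrt{c_{xy}}\left(u\left(x\right)-u\left(y\right)\right)\cdot\sqrt{c_{xy}}\,v_{xy}=c_{xy}\left(u\left(x\right)-u\left(y\right)\right)v_{xy}$. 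By the definition of the current in (\ref{eq:en9}) this is exactly $I\left(u\right)_{xy}v_{xy}$, and substituting into the frame expansion yields (\ref{eq:en10}) with the norm-convergence inherited from Lemma \ref{lem:eframe}.

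There is no genuine obstacle here: the corollary is an immediate specialization of the frame reconstruction once the coefficients are unwound, and the ``hard part'' amounts to the bookkeeping combination of the two $\sqrt{c_{xy}}$ factors just described. The only other items I would be careful about are conventions. Since $\mathscr{H}_{E}$ may carry complex-valued functions, I would use (\ref{eq:en7}) in the slot consistent with (\ref{eq:en4}) so that no stray conjugate appears (for real voltages this is automatic). I would also note, as a consistency check, that each summand $I\left(u\right)_{xy}v_{xy}$ is independent of the chosen orientation of the edge: reversing orientation sends $v_{xy}\mapsto v_{yx}=-v_{xy}$ while, by (\ref{eq:en9}), $I\left(u\right)_{xy}\mapsto I\left(u\right)_{yx}=-I\left(u\right)_{xy}$, so the product is unchanged. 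Hence the sum over $E^{\left(ori\right)}$ is well-defined, and the electrically induced orientation of Definition \ref{def:ori}(\ref{enu:ori3}) is a legitimate but inessential choice.
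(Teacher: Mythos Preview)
Your proposal is correct and follows essentially the same approach as the paper: invoke the Parseval frame from Theorem \ref{thm:eframe}, apply the reconstruction formula (\ref{eq:en4}) from Lemma \ref{lem:eframe}, and then identify each summand $\langle w_{xy},u\rangle_{\mathscr{H}_{E}}w_{xy}$ with $I(u)_{xy}v_{xy}$ via (\ref{eq:en8}) and (\ref{eq:en9}). Your additional remarks on conjugation conventions and orientation-independence are correct but not needed for the argument.
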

\begin{proof}
By Theorem \ref{thm:eframe} and Lemma \ref{lem:eframe}, we have
the following norm-convergent representation
\begin{equation}
u=\sum_{\left(xy\right)\in E^{\left(ori\right)}}\left\langle w_{xy},u\right\rangle _{\mathscr{H}_{E}}w_{xy},\label{eq:en11}
\end{equation}
see (\ref{eq:en4}). Now by statements (\ref{eq:en8}) and (\ref{eq:en9})
we get 
\begin{equation}
\left\langle w_{xy},u\right\rangle _{\mathscr{H}_{E}}w_{xy}=I\left(u\right)_{xy}v_{xy}.\label{eq:en12}
\end{equation}
Considering (\ref{eq:en11}) and (\ref{eq:en12}), the desired conclusion
(\ref{eq:en10}) then follows.
\end{proof}

\section{\label{sec:lemmas}Lemmas}

Starting with a given network $(V,E,c)$, we introduce functions on
the vertices $V$, voltage, dipoles, and point-masses; and on the
edges $E$, conductance, and current. We introduce a system of operators
which will be needed throughout, the graph-Laplacian $\Delta$, and
the transition operator $P$. We show that there are two Hilbert spaces
serving different purposes, $l^{2}(V)$, and the energy Hilbert space
$\mathscr{H}_{E}$; the latter depending on choice of conductance
function $c$.

Lemma \ref{lem:Delta} below summarizes the key properties of $\Delta$
as an operator, both in $l^{2}(V)$ and in $\mathscr{H}_{E}$. The
metric properties of networks $(V,E,c)$ depend on $\mathscr{H}_{E}$
(Lemma \ref{lem:Nc}), and not on $l^{2}(V)$.

Recall that the graph-Laplacian $\Delta$ is automatically essentially
selfadjoint as a densely defined operator in $l^{2}(V)$, but not
as a $\mathscr{H}_{E}$ operator \cite{Jor08,JoPe11b}. In section
\ref{sec:eg}  we compute examples where $(\Delta,\mathscr{H}_{E})$
has deficiency indices $(m,m)$, $m>0$. These results make use of
an associated reversible random walk, as well as the transition operator
$P$.

Let $\left(V,E,c\right)$ be as above; note we are assuming that $G=\left(V,E\right)$
is connected; so there is a point $o$ in $V$ such that every $x\in V$
is connected to $o$ via a finite path of edges. We will set $V':=V\backslash\left\{ o\right\} $,
and consider $l^{2}\left(V\right)$ and $l^{2}\left(V'\right)$. If
$x\in V$, we set 
\begin{equation}
\delta_{x}\left(y\right)=\begin{cases}
1 & \mbox{if }y=x\\
0 & \mbox{if }y\neq x
\end{cases}\label{eq:delx}
\end{equation}

Set $\mathscr{H}_{E}:=$ the set of all functions $u:V\rightarrow\mathbb{C}$
such that 
\begin{equation}
\left\Vert u\right\Vert _{\mathscr{H}_{E}}^{2}:=\frac{1}{2}\underset{\left(x,y\right)\in E}{\sum\sum}c_{xy}\left|u\left(x\right)-u\left(y\right)\right|^{2}<\infty,\label{eq:Enorm}
\end{equation}
and we note \cite{JoPe10} that $\mathscr{H}_{E}$ is a Hilbert space.
Moreover for all $x,y\in V$, there is a real-valued solution $v_{xy}\in\mathscr{H}_{E}$
to the equation
\begin{equation}
\Delta v_{x,y}=\delta_{x}-\delta_{y}.\label{eq:fsoln}
\end{equation}
If $y=o$, we set $v_{x}:=v_{x,o}$, and note 
\begin{equation}
\Delta v_{x}=\delta_{x}-\delta_{o}.\label{eq:fsoln1}
\end{equation}
In this case, we assume that $v_{x}$ is defined only for $x\in V'$.
\begin{defn}
\label{def:D}Let $\left(V,E,c,o,\Delta,\left\{ v_{x}\right\} _{x\in V'}\right)$
be as above, and set 
\begin{align}
\mathscr{D}_{2} & :=span\left\{ \delta_{x}\:\big|\: x\in V\right\} ,\;\mbox{and}\label{eq:D1}\\
\mathscr{D}_{E} & :=span\left\{ v_{x}\:\big|\: x\in V'\right\} ,\label{eq:D2}
\end{align}
where by ``span'' we mean of all finite linear combinations.\end{defn}
\begin{lem}
\label{lem:Delta}The following hold:
\begin{enumerate}
\item $\left\langle \Delta u,v\right\rangle _{l^{2}}=\left\langle u,\Delta v\right\rangle _{l^{2}}$,
$\forall u,v\in\mathscr{D}_{2}$;
\item \label{enu:D2}$\left\langle \Delta u,v\right\rangle _{\mathscr{H}_{E}}=\left\langle u,\Delta v\right\rangle _{\mathscr{H}_{E}},$
$\forall u,v\in\mathscr{D}_{E}$;
\item \label{enu:D3}$\left\langle u,\Delta u\right\rangle _{l^{2}}\geq0$,
$\forall u\in\mathscr{D}_{2}$, and
\item $\left\langle u,\Delta u\right\rangle _{\mathscr{H}_{E}}\geq0$, $\forall u\in\mathscr{D}_{E}$;
where for $u,v\in\mathscr{H}_{E}$ we set

\begin{equation}
\left\langle u,v\right\rangle _{\mathscr{H}_{E}}=\frac{1}{2}\underset{\left(x,y\right)\in E}{\sum\sum}c_{xy}\overline{\left(u\left(x\right)-u\left(y\right)\right)}\left(v\left(x\right)-v\left(y\right)\right).\label{eq:HEinner}
\end{equation}

\end{enumerate}

Moreover, we have
\begin{enumerate}[resume]
\item $\left\langle v_{x,y},u\right\rangle _{\mathscr{H}_{E}}=u\left(x\right)-u\left(y\right)$,
$\forall x,y\in V$. 
\end{enumerate}

Finally,
\begin{enumerate}[resume]
\item 
\[
\delta_{x}\left(\cdot\right)=c\left(x\right)v_{x}\left(\cdot\right)-\sum_{y\sim x}c_{xy}v_{y}\left(\cdot\right),\;\forall x\in V'.
\]

\end{enumerate}
\end{lem}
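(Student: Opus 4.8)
The plan is to prove the identity as an equality of vectors in $\mathscr{H}_{E}$, by showing that both sides have the same $\mathscr{H}_{E}$-inner product against every $u\in\mathscr{H}_{E}$ and then invoking non-degeneracy of $\left\langle\cdot,\cdot\right\rangle_{\mathscr{H}_{E}}$. The first thing I would check is that the left-hand side even makes sense as an $\mathscr{H}_{E}$-vector: since every vertex has finite degree (condition (2) on $E$), a direct count of the surviving terms in (\ref{eq:Enorm}) gives $\left\Vert\delta_{x}\right\Vert_{\mathscr{H}_{E}}^{2}=c\left(x\right)<\infty$, so indeed $\delta_{x}\in\mathscr{H}_{E}$.

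Next I would compute the two inner products against an arbitrary $u\in\mathscr{H}_{E}$. For the left side, plugging $\delta_{x}$ into (\ref{eq:HEinner}), the only nonzero contributions to the double sum come from oriented edges with exactly one endpoint equal to $x$; the pairs $\left(x,y\right)$ and $\left(y,x\right)$ each contribute $c_{xy}\left(u\left(x\right)-u\left(y\right)\right)$ (using $c_{xy}=c_{yx}$ and that $\delta_{x}$ is real), so the two half-sums combine to cancel the factor $\frac{1}{2}$ and yield $\left\langle\delta_{x},u\right\rangle_{\mathscr{H}_{E}}=\sum_{y\sim x}c_{xy}\left(u\left(x\right)-u\left(y\right)\right)=\left(\Delta u\right)\left(x\right)$ by (\ref{eq:lap}). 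For the right side, I would use the reproducing identity (\ref{eq:en7}), i.e. $\left\langle v_{z},u\right\rangle_{\mathscr{H}_{E}}=u\left(z\right)-u\left(o\right)$ for $v_{z}=v_{z,o}$; because the sum over $y\sim x$ is finite, linearity gives $\left\langle c\left(x\right)v_{x}-\sum_{y\sim x}c_{xy}v_{y},\,u\right\rangle_{\mathscr{H}_{E}}=c\left(x\right)\left(u\left(x\right)-u\left(o\right)\right)-\sum_{y\sim x}c_{xy}\left(u\left(y\right)-u\left(o\right)\right)$. The crucial cancellation is that the total coefficient of $u\left(o\right)$ is $-c\left(x\right)+\sum_{y\sim x}c_{xy}=0$ by (\ref{eq:cond}), leaving exactly $c\left(x\right)u\left(x\right)-\sum_{y\sim x}c_{xy}u\left(y\right)=\left(\Delta u\right)\left(x\right)$.

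Having shown that both inner products equal $\left(\Delta u\right)\left(x\right)$ for every $u\in\mathscr{H}_{E}$, I would conclude $\delta_{x}=c\left(x\right)v_{x}-\sum_{y\sim x}c_{xy}v_{y}$ in $\mathscr{H}_{E}$. The main point to watch is that this is an equality of $\mathscr{H}_{E}$-classes, i.e. modulo an additive constant, since vectors in $\mathscr{H}_{E}$ are functions determined only up to constants; if the pointwise statement is wanted, one fixes representatives by normalizing $v_{z}\left(o\right)=0$, and then evaluating at $o$ (legitimate because $x\in V'$, so $x\neq o$) pins the constant to zero. I expect the only genuinely delicate step to be the bookkeeping in the direct computation of $\left\langle\delta_{x},u\right\rangle_{\mathscr{H}_{E}}$ — tracking the $\pm1$ signs of $\delta_{x}\left(a\right)-\delta_{x}\left(b\right)$ across the two orientations of each incident edge and confirming that the two half-sums exactly absorb the factor $\frac{1}{2}$; beyond that, the argument is pure linearity together with the already-established reproducing property (\ref{eq:en7}) and the summation identity (\ref{eq:cond}).
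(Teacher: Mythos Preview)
Your argument for part~(6) is correct and is essentially the same route the paper takes: the identity $\left\langle\delta_{x},u\right\rangle_{\mathscr{H}_{E}}=(\Delta u)(x)$ that you derive is exactly Lemma~\ref{lem:deltaD}, and the ``pair both sides against a spanning family and invoke nondegeneracy'' step is precisely how the paper reproves this formula later as Corollary~6.9 (there pairing against the $v_{x}$'s rather than arbitrary $u$, which is equivalent since $\{v_{x}\}$ is total). Note, though, that the paper's proof of Lemma~\ref{lem:Delta} itself only writes out part~(\ref{enu:D2}) and defers the remaining items to \cite{JoPe10,JoPe11a}; your write-up addresses only part~(6), so if you intend a self-contained proof of the full lemma you still owe (1), (3), (4) (all short direct computations from (\ref{eq:lap}) and (\ref{eq:HEinner})) and (5) (which is just the defining property (\ref{eq:en7}) of $v_{x,y}$).
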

\begin{proof}
~

\uline{Proof of \mbox{(\ref{enu:D2})}} We have $\left\langle \Delta u,v\right\rangle _{\mathscr{H}_{E}}=\left\langle u,\Delta v\right\rangle _{\mathscr{H}_{E}}$,
for all $u,v\in\mathscr{D}_{E}$. Set $v_{x}:=v_{xo}$ where $o$
is a fixed base-point in $V$, $V':=V\backslash\left\{ o\right\} $,
so $\Delta v_{x}=\delta_{x}-\delta_{o}$, $x\in V'$. Set $u=\sum_{x\in V'}\xi_{x}v_{x}$,
$v=\sum_{x\in V'}\eta_{x}v_{x}$; where the summations are finite
by convention; then 
\begin{eqnarray*}
\left\langle \Delta u,v\right\rangle _{\mathscr{H}_{E}} & = & \sum_{V'}\sum_{V'}\overline{\xi_{x}}\eta_{y}\left\langle \delta_{x}-\delta_{o},v_{y}\right\rangle _{\mathscr{H}_{E}}\\
 & = & \sum_{V'}\sum_{V'}\overline{\xi_{x}}\eta_{y}\big((\delta_{x}\left(y\right)-\underset{{\scriptscriptstyle =0}}{\underbrace{\delta_{x}\left(o\right)}})-(\underset{{\scriptscriptstyle =0}}{\underbrace{\delta_{o}\left(y\right)}}-\underset{{\scriptscriptstyle =1}}{\underbrace{\delta_{o}\left(o\right)}})\big)\\
 & = & \sum_{V'}\sum_{V'}\overline{\xi_{x}}\eta_{y}\left(\delta_{xy}+1\right)\\
 & = & \sum_{V'}\overline{\xi_{x}}\eta_{y}+\left(\overline{\sum_{V'}\xi_{x}}\right)\left(\sum_{V'}\eta_{y}\right)\\
 & = & \left\langle u,\Delta v\right\rangle _{\mathscr{H}_{E}}.\;(\mbox{by symmetry})
\end{eqnarray*}

For the remaining, see \cite{JoPe10,JoPe11a}.\end{proof}
\begin{lem}
\label{lem:Nc}Let $\left(V,E,c,o\right)$ be as above; then the function
\begin{equation}
N_{c}\left(x,y\right):=\left\Vert v_{x}-v_{y}\right\Vert _{\mathscr{H}_{E}}^{2}\label{eq:Nc}
\end{equation}
is conditionally negative definite, i.e., for all finite system $\left\{ \xi_{x}\right\} \subseteq\mathbb{C}$
such that $\sum_{x\in V'}\xi_{x}=0$, we have 
\begin{equation}
\sum_{x}\sum_{y}\overline{\xi_{x}}\xi_{y}N_{c}\left(x,y\right)\leq0.\label{eq:Nc1}
\end{equation}
\end{lem}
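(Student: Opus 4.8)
The plan is to recognize the inequality (\ref{eq:Nc1}) as an instance of the classical Schoenberg-type fact that the squared-distance function $\left\Vert h_{x}-h_{y}\right\Vert ^{2}$ attached to any family of vectors $\left\{ h_{x}\right\} $ in a Hilbert space is conditionally negative definite; here the relevant vectors are simply the dipoles $h_{x}:=v_{x}\in\mathscr{H}_{E}$. The whole argument rests on two inputs already in place: first, that $N_{c}\left(x,y\right)=\left\Vert v_{x}-v_{y}\right\Vert _{\mathscr{H}_{E}}^{2}$ by the definition (\ref{eq:Nc}); and second, that the dipoles $v_{x}$ may be taken real-valued (as recorded just before Definition \ref{def:D}), so that the energy inner products $\left\langle v_{x},v_{y}\right\rangle _{\mathscr{H}_{E}}$ given by (\ref{eq:HEinner}) are real and symmetric. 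This real-valuedness is what makes the polarization below clean, though the conclusion survives without it via the Hermitian Gram matrix $\left\langle v_{x},v_{y}\right\rangle _{\mathscr{H}_{E}}$.

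First I would record the polarization identity in $\mathscr{H}_{E}$: since the $v_{x}$ are real-valued, $\left\Vert v_{x}-v_{y}\right\Vert _{\mathscr{H}_{E}}^{2}=\left\Vert v_{x}\right\Vert _{\mathscr{H}_{E}}^{2}+\left\Vert v_{y}\right\Vert _{\mathscr{H}_{E}}^{2}-2\left\langle v_{x},v_{y}\right\rangle _{\mathscr{H}_{E}}$. Substituting this into the left-hand side of (\ref{eq:Nc1}) splits the double sum $\sum_{x}\sum_{y}\overline{\xi_{x}}\xi_{y}N_{c}\left(x,y\right)$ into three pieces. Next I would invoke the hypothesis $\sum_{x\in V'}\xi_{x}=0$ (which also yields $\sum_{x\in V'}\overline{\xi_{x}}=0$) to annihilate the first two pieces: the term $\sum_{x,y}\overline{\xi_{x}}\xi_{y}\left\Vert v_{x}\right\Vert _{\mathscr{H}_{E}}^{2}$ factors as $\left(\sum_{x}\overline{\xi_{x}}\left\Vert v_{x}\right\Vert _{\mathscr{H}_{E}}^{2}\right)\left(\sum_{y}\xi_{y}\right)=0$, and symmetrically for the $\left\Vert v_{y}\right\Vert _{\mathscr{H}_{E}}^{2}$ term. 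What remains is exactly $-2\sum_{x,y}\overline{\xi_{x}}\xi_{y}\left\langle v_{x},v_{y}\right\rangle _{\mathscr{H}_{E}}$.

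Finally I would identify the surviving sum as a manifestly nonnegative quantity: by sesquilinearity of $\left\langle \cdot,\cdot\right\rangle _{\mathscr{H}_{E}}$ one has $\sum_{x,y}\overline{\xi_{x}}\xi_{y}\left\langle v_{x},v_{y}\right\rangle _{\mathscr{H}_{E}}=\left\Vert \sum_{x}\xi_{x}v_{x}\right\Vert _{\mathscr{H}_{E}}^{2}\geq0$, so the left-hand side of (\ref{eq:Nc1}) equals $-2\left\Vert \sum_{x}\xi_{x}v_{x}\right\Vert _{\mathscr{H}_{E}}^{2}\leq0$, which is the desired conclusion. The computation is essentially mechanical, and I do not expect a genuine obstacle; the only point demanding care is keeping the conjugations consistent for complex coefficients $\xi_{x}$ and confirming that the two diagonal norm-square terms genuinely cancel. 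This cancellation is exactly where the mean-zero constraint $\sum_{x}\xi_{x}=0$ built into the definition of conditional negative definiteness enters, and it is the crux of the whole argument.
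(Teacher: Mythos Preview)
Your proof is correct and follows essentially the same route as the paper: expand $\left\Vert v_{x}-v_{y}\right\Vert _{\mathscr{H}_{E}}^{2}$, use $\sum_{x}\xi_{x}=0$ to kill the diagonal norm-square terms, and recognize the remaining cross terms as $-2\left\Vert \sum_{x}\xi_{x}v_{x}\right\Vert _{\mathscr{H}_{E}}^{2}$. The only cosmetic difference is that the paper keeps the two cross terms $\left\langle v_{x},v_{y}\right\rangle _{\mathscr{H}_{E}}$ and $\left\langle v_{y},v_{x}\right\rangle _{\mathscr{H}_{E}}$ separate rather than invoking real-valuedness to combine them at the outset.
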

\begin{proof}
Compute the LHS in (\ref{eq:Nc1}) as follows. If $\sum\xi_{x}=0$,
we have 
\begin{align*}
 & \sum\overline{\xi_{x}}\xi_{y}N_{c}\left(x,y\right)\\
= & -\sum\sum\overline{\xi_{x}}\xi_{y}\left\langle v_{x},v_{y}\right\rangle _{\mathscr{H}_{E}}-\sum\sum\overline{\xi_{x}}\xi_{y}\left\langle v_{y},v_{x}\right\rangle _{\mathscr{H}_{E}}\\
= & -2\left\Vert \sum_{x}\xi_{x}v_{x}\right\Vert _{\mathscr{H}_{E}}^{2}.
\end{align*}

\end{proof}
We show also the following
\begin{lem}
\cite{JoPe10}
\begin{equation}
\left\{ u\in\mathscr{H}_{E}\:\big|\:\left\langle u,\delta_{x}\right\rangle _{\mathscr{H}_{E}}=0,\;\forall x\in V\right\} =\left\{ u\in\mathscr{H}_{E}\:\big|\:\Delta u=0\right\} .\label{eq:harm}
\end{equation}

\end{lem}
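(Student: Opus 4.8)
The plan is to reduce the entire statement to a single pointwise identity expressing the $\mathscr{H}_{E}$-inner product of an arbitrary $u$ with the point-mass $\delta_{x}$ directly in terms of the graph Laplacian. Concretely, I would show that for every $u\in\mathscr{H}_{E}$ and every $x\in V$,
\[
\left\langle u,\delta_{x}\right\rangle _{\mathscr{H}_{E}}=\overline{\left(\Delta u\right)\left(x\right)}.
\]
Once this is in hand the lemma is immediate: the left-hand set in (\ref{eq:harm}) consists of those $u$ for which the left side above vanishes for every $x$, and by the identity this says exactly that $\left(\Delta u\right)\left(x\right)=0$ for all $x$, i.e.\ $\Delta u=0$, which is precisely the right-hand set.

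Before computing I would record two preliminaries. First, the pairing $\left\langle u,\delta_{x}\right\rangle _{\mathscr{H}_{E}}$ is legitimate because $\delta_{x}\in\mathscr{H}_{E}$: from (\ref{eq:HEinner}) the only ordered edges contributing to $\left\Vert \delta_{x}\right\Vert _{\mathscr{H}_{E}}^{2}$ are those incident to $x$, which gives $\left\Vert \delta_{x}\right\Vert _{\mathscr{H}_{E}}^{2}=c\left(x\right)<\infty$ by the local finiteness assumption on $\left(V,E\right)$. Second, since both $u$ and $\delta_{x}$ enter the inner product only through differences $u\left(a\right)-u\left(b\right)$, everything is well-defined on the equivalence classes modulo constants; and because $\Delta$ annihilates constants, the quantity $\left(\Delta u\right)\left(x\right)$ is likewise well-defined on the class of $u$, so both sides of the identity descend to the quotient.

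The computation itself is the core step. Starting from (\ref{eq:HEinner}) with $v=\delta_{x}$, the factor $\delta_{x}\left(a\right)-\delta_{x}\left(b\right)$ vanishes unless exactly one of $a,b$ equals $x$ (there are no self-loops by assumption (3)), so the a priori infinite double sum collapses to a finite sum over the edges incident to $x$. Splitting into the two cases $a=x$ and $b=x$, using the symmetry $c_{ab}=c_{ba}$ to merge them into a single sum $\sum_{y\sim x}c_{xy}\overline{\left(u\left(x\right)-u\left(y\right)\right)}$, and cancelling the prefactor $\frac{1}{2}$ against the doubling, I obtain precisely $\overline{\left(\Delta u\right)\left(x\right)}$ by Definition \ref{def:lap} (the conjugate appears because $c_{xy}$ is real and $u$ sits in the conjugated slot).

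I do not expect a genuine obstacle; the only things to watch are bookkeeping. The reduction to edges incident to $x$ is what turns the defining sum for the inner product into a harmless \emph{finite} sum, so no convergence or interchange-of-summation argument is needed. The one conceptual point to flag carefully is the modulo-constants interpretation of $\mathscr{H}_{E}$, namely that both sides of the identity are genuinely functions of the equivalence class of $u$; this is handled by the second preliminary above. With the identity established, the set equality (\ref{eq:harm}) follows by the elementary logical equivalence described in the first paragraph.
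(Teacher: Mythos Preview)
Your proposal is correct and is essentially the paper's own approach: the paper proves the pointwise identity $\left\langle \delta_{x},f\right\rangle _{\mathscr{H}_{E}}=\left(\Delta f\right)\left(x\right)$ in Lemma~\ref{lem:deltaD} by exactly the collapse-to-incident-edges computation you outline, and then records the set equality (\ref{eq:harm}) as an immediate corollary. The only cosmetic difference is the slot in which $u$ appears (hence your conjugate), and your explicit remarks about well-definedness modulo constants and finiteness of $\left\Vert \delta_{x}\right\Vert _{\mathscr{H}_{E}}$ are points the paper leaves implicit.
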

When $N$ is a fixed negative definite function, we get an associated
Hilbert space $\mathscr{H}_{N}$ by completing finitely supported
functions $\xi$ on $V$ subject to the condition $\sum_{x\in V}\xi_{x}=0$,
under the inner product 
\[
\left\Vert \xi\right\Vert _{\mathscr{H}_{N}}^{2}:=-\sum_{x}\sum_{y}\overline{\xi_{x}}\xi_{y}N\left(x,y\right)
\]
and quotienting out with 
\[
\sum_{x}\sum_{y}\overline{\xi_{x}}\xi_{y}N\left(x,y\right)=0.
\]

\begin{lem}
Assume $\left(V,E,c\right)$ is connected. If a negative definite
function $N$ on $V\times V$ satisfies $N=N_{c}$, then 
\begin{equation}
\mathscr{H}_{N_{c}}=\mathscr{H}_{E}\label{eq:N2-1}
\end{equation}
where $\mathscr{H}_{E}$ is the energy Hilbert space from \secref{eframe}
defined on the prescribed functions.\end{lem}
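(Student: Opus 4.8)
The plan is to realize $\mathscr{H}_{N_{c}}$ concretely inside $\mathscr{H}_{E}$ by the Schoenberg-type embedding $x\mapsto v_{x}$, and then check that the resulting map is an isometry onto. Recall that $\mathscr{H}_{N_{c}}$ is the completion of the finitely supported $\xi$ on $V$ with $\sum_{x}\xi_{x}=0$, modulo the null space of the seminorm $\|\xi\|_{\mathscr{H}_{N_{c}}}^{2}=-\sum_{x}\sum_{y}\overline{\xi_{x}}\xi_{y}N_{c}(x,y)$. On such configurations I would define
\[
\Phi(\xi):=\sum_{x}\xi_{x}v_{x}\in\mathscr{H}_{E},
\]
noting that $v_{o}=v_{oo}=0$, so only the terms with $x\in V'$ actually contribute.

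The first step is the isometry, which is essentially a restatement of the computation behind Lemma \ref{lem:Nc}. For $\sum_{x}\xi_{x}=0$ the same cancellation that produced (\ref{eq:Nc1}) gives $\|\xi\|_{\mathscr{H}_{N_{c}}}^{2}=2\bigl\|\sum_{x}\xi_{x}v_{x}\bigr\|_{\mathscr{H}_{E}}^{2}=2\,\|\Phi(\xi)\|_{\mathscr{H}_{E}}^{2}$. Thus $\Phi$ sends the $\mathscr{H}_{N_{c}}$-seminorm to a fixed multiple of the $\mathscr{H}_{E}$-norm; in particular it annihilates exactly the null vectors, descends to the quotient, and extends by continuity to a (constant-multiple-of-an-)isometry $\overline{\Phi}\colon\mathscr{H}_{N_{c}}\to\mathscr{H}_{E}$ onto the closure of its range. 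After the harmless rescaling by $\sqrt{2}$ this is a genuine unitary, and I will read the asserted equality (\ref{eq:N2-1}) through this canonical identification.

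The second and decisive step is surjectivity, i.e.\ that the range of $\Phi$ is dense. Using the reproducing property (\ref{eq:en7}) one checks $\Phi(\delta_{x}-\delta_{y})=v_{x}-v_{y}=v_{xy}$, so the range of $\Phi$ is precisely $\mathscr{D}_{E}=\operatorname{span}\{v_{x}:x\in V'\}$ from (\ref{eq:D2}), which also coincides with the span of all dipoles $v_{xy}$. It therefore suffices to show $\mathscr{D}_{E}$ is dense in $\mathscr{H}_{E}$, equivalently $\mathscr{D}_{E}^{\perp}=\{0\}$. This is the heart of the argument: if $u\in\mathscr{H}_{E}$ satisfies $\langle v_{x},u\rangle_{\mathscr{H}_{E}}=0$ for all $x$, then (\ref{eq:en7}) forces $u(x)-u(o)=0$ for every $x$, so $u$ is constant; and since vectors of $\mathscr{H}_{E}$ are functions modulo constants (a constant has energy $0$), this means $u=0$ in $\mathscr{H}_{E}$. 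Connectedness of $(V,E,c)$ is what guarantees that every $v_{x}$ exists and that the comparison with the base point $o$ reaches all vertices.

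I expect the density step to be the only genuine obstacle, and even that is short once the dipole identity (\ref{eq:en7}) is available; the remaining care is purely bookkeeping — tracking the scalar factor $2$ and invoking the modulo-constants convention so that ``constant $\Rightarrow$ zero'' is legitimate. Assembling the pieces, $\overline{\Phi}$ is an isometric bijection (up to the explicit constant), which is exactly the identification $\mathscr{H}_{N_{c}}=\mathscr{H}_{E}$ claimed in (\ref{eq:N2-1}).
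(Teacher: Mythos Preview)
Your argument is correct and follows the same two-step strategy as the paper: use the computation from Lemma~\ref{lem:Nc} to see that $\xi\mapsto\sum_{x}\xi_{x}v_{x}$ is (a scalar multiple of) an isometry, then prove density of the image in $\mathscr{H}_{E}$ via the dipole identity \eqref{eq:en7} and the modulo-constants convention. Your version is in fact slightly cleaner than the paper's: by observing $v_{o}=0$ you correctly identify the range of $\Phi$ as all of $\mathscr{D}_{E}$ (the paper imposes the extra constraint $\sum_{x\in V'}\xi_{x}=0$, which is unnecessary and makes the density step a bit more delicate), and you keep track of the factor $2$ that the paper's equation \eqref{eq:N2-2} silently drops.
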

\begin{proof}
By Lemma \ref{lem:Nc} we have 
\begin{equation}
\left\Vert \xi\right\Vert _{\mathscr{H}_{N_{c}}}^{2}=\left\Vert \sum_{x\in V'}\xi_{x}v_{x}\right\Vert _{\mathscr{H}_{E}}^{2}\label{eq:N2-2}
\end{equation}
where $v_{x}=v_{ox}$ is a system of dipoles corresponding to a fixed
base point $o\in V$, and $V'=V\backslash\left\{ o\right\} $, and
\begin{equation}
\left\langle v_{x},u\right\rangle _{\mathscr{H}_{E}}=u\left(x\right)-u\left(o\right),\;\forall u\in\mathscr{H}_{E}.\label{eq:N2-3}
\end{equation}
Hence, we need only prove that all the finite summations $\sum_{x\in V'}\xi_{x}v_{x}$
subjecting to $\sum_{x\in V'}\xi_{x}=0$ are dense in $\mathscr{H}_{E}$.
But if $u\in\mathscr{H}_{E}$, $u\in\left\{ \sum_{x\in V'}\xi_{x}v_{x}\:\big|\:\sum_{x\in V'}\xi_{x}=0\right\} ^{\perp}$
(the orthogonal-complement) then 
\[
\left\langle v_{x},u\right\rangle _{\mathscr{H}_{E}}-\left\langle v_{y},u\right\rangle _{\mathscr{H}_{E}}=0
\]
for all $x,y\in V'$. Hence by (\ref{eq:N2-3}) we get $u\left(x\right)=u\left(y\right)$
for all pairs $x,y\in V'$; and $u\left(x\right)=u\left(o\right)$,
$x\in V'$. Since $\left(V,E,c\right)$ is connected, it follows that
$u$ is constant. But with the normalization $v_{x}\left(o\right)=0$
$\left(x\in V'\right)$, we conclude that $u$ must be zero.\end{proof}
\begin{lem}
\label{lem:deltaD}Let $\left(V,E,c\right)$ be a connected network,
and let $\mathscr{H}_{E}$ be the energy Hilbert space; then, for
all $f\in\mathscr{H}_{E}$ and $x\in V$, we have 
\begin{equation}
\left\langle \delta_{x},f\right\rangle _{\mathscr{H}_{E}}=\left(\Delta f\right)\left(x\right).\label{eq:N3-1}
\end{equation}
\end{lem}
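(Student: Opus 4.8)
The plan is to prove \eqref{eq:N3-1} by a direct expansion of the energy inner product \eqref{eq:HEinner} with the first slot set equal to $u=\delta_{x}$. Before doing so I would check that $\delta_{x}$ genuinely lies in $\mathscr{H}_{E}$, so that the left-hand side is meaningful: in \eqref{eq:Enorm} the only ordered pairs $(a,b)\in E$ for which $\delta_{x}(a)-\delta_{x}(b)\neq0$ are those with exactly one endpoint equal to $x$, and each such term contributes $c_{ab}\cdot 1$, so that $\left\Vert \delta_{x}\right\Vert _{\mathscr{H}_{E}}^{2}=\sum_{y\sim x}c_{xy}=c(x)$, which is finite by the local-finiteness assumption that $\#E(x)$ is finite. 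In particular the pairing $\left\langle \delta_{x},f\right\rangle _{\mathscr{H}_{E}}$ reduces to a finite sum over the edges incident to $x$, so no convergence question arises even though $f$ ranges over all of $\mathscr{H}_{E}$.

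Next I would carry out the computation. Substituting $u=\delta_{x}$ into \eqref{eq:HEinner}, the surviving terms split into two families: those indexed by $(x,b)\in E$, where $\delta_{x}(x)-\delta_{x}(b)=1$, and those indexed by $(a,x)\in E$, where $\delta_{x}(a)-\delta_{x}(x)=-1$. Using the symmetry $c_{ab}=c_{ba}$ from Definition \ref{def:cond}, the two families are equal, and together they absorb the prefactor $\tfrac12$, yielding
\[
\left\langle \delta_{x},f\right\rangle _{\mathscr{H}_{E}}=\sum_{y\sim x}c_{xy}\bigl(f(x)-f(y)\bigr).
\]
By Definition \ref{def:lap} the right-hand side is exactly $(\Delta f)(x)$, which is the assertion.

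Since a vector $f\in\mathscr{H}_{E}$ is only an equivalence class of functions modulo constants, I would note that both sides of \eqref{eq:N3-1} are manifestly invariant under $f\mapsto f+\text{const}$, as they involve only the differences $f(x)-f(y)$; hence the identity is well-posed on $\mathscr{H}_{E}$. I do not expect a genuine obstacle here. The only point that deserves care is the bookkeeping in the double sum: each geometric edge is counted twice, as the ordered pairs $(a,b)$ and $(b,a)$, and this double-counting is precisely what the factor $\tfrac12$ and the symmetry step together resolve.

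As an independent cross-check, and an alternative route that avoids the direct manipulation of the edge sum, one may instead invoke part (6) of Lemma \ref{lem:Delta}, namely $\delta_{x}=c(x)v_{x}-\sum_{y\sim x}c_{xy}v_{y}$ for $x\in V'$, together with the dipole reproducing identity $\left\langle v_{z},f\right\rangle _{\mathscr{H}_{E}}=f(z)-f(o)$ coming from part (5). Expanding $\left\langle \delta_{x},f\right\rangle _{\mathscr{H}_{E}}$ by linearity and using $c(x)=\sum_{y\sim x}c_{xy}$ makes the $f(o)$ contributions cancel, again producing $(\Delta f)(x)$. This confirms the formula for $x\in V'$, with the base point $x=o$ covered by the direct computation above.
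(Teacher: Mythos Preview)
Your proposal is correct and follows essentially the same route as the paper: a direct substitution of $u=\delta_{x}$ into the energy inner product \eqref{eq:HEinner}, followed by the observation that only edges incident to $x$ survive and that the symmetry $c_{st}=c_{ts}$ absorbs the factor $\tfrac12$, yielding $\sum_{y\sim x}c_{xy}(f(x)-f(y))=(\Delta f)(x)$. Your additional remarks (finiteness of $\Vert\delta_{x}\Vert_{\mathscr{H}_{E}}$, invariance under $f\mapsto f+\text{const}$, and the alternative derivation via Lemma~\ref{lem:Delta}(5)--(6)) are correct embellishments but not needed for the argument.
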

\begin{proof}
We compute $\mbox{LHS}_{\left(\ref{eq:N3-1}\right)}$ with the use
of eq. (\ref{eq:HEinner}) in Lemma \ref{lem:Delta}. Indeed 
\begin{eqnarray*}
\left\langle \delta_{x},f\right\rangle _{\mathscr{H}_{E}} & \underset{\left(\ref{eq:HEinner}\right)}{=} & \frac{1}{2}\underset{\left(st\right)\in E}{\sum\sum}c_{st}\left(\delta_{x}\left(s\right)-\delta_{x}\left(t\right)\right)\left(f\left(s\right)-f\left(t\right)\right)\\
 & = & \sum_{t\sim x}c_{xt}\left(f\left(x\right)-f\left(t\right)\right)=\left(\Delta f\right)\left(x\right)
\end{eqnarray*}
where we used (\ref{eq:lap}) in Definition \ref{def:lap} in the
last step.\end{proof}
\begin{cor}
\label{cor:deltain}Let $\left(V,E,c\right)$ be as above, then
\begin{equation}
\left\langle \delta_{x},\delta_{y}\right\rangle _{\mathscr{H}_{E}}=\begin{cases}
\widetilde{c}\left(x\right)=\sum_{t\sim x}c_{xt} & \mbox{if \ensuremath{y=x}}\\
-c_{xy} & \mbox{if \ensuremath{\left(xy\right)\in E}}\\
0 & \mbox{if \ensuremath{\left(xy\right)\in E}and \ensuremath{x\neq y}}
\end{cases}\label{eq:N3-2}
\end{equation}
\end{cor}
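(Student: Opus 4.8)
The plan is to read this off from Lemma~\ref{lem:deltaD}, which already identifies the $\mathscr{H}_{E}$-pairing of $\delta_{x}$ against any $f\in\mathscr{H}_{E}$ with $\left(\Delta f\right)(x)$. Before invoking it I would note that each point mass $\delta_{y}$ does lie in $\mathscr{H}_{E}$: a direct evaluation of (\ref{eq:Enorm}) gives $\left\Vert \delta_{y}\right\Vert _{\mathscr{H}_{E}}^{2}=\sum_{t\sim y}c_{yt}=c(y)<\infty$ by local finiteness. With $\delta_{y}$ admissible, I specialize Lemma~\ref{lem:deltaD} to $f=\delta_{y}$:
\begin{equation}
\left\langle \delta_{x},\delta_{y}\right\rangle _{\mathscr{H}_{E}}=\left(\Delta\delta_{y}\right)(x).
\end{equation}
The whole computation thus collapses to evaluating the graph Laplacian on a single point mass.

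Next I would expand the right-hand side using the second form of $\Delta$ from Definition~\ref{def:lap}, i.e.\ $\left(\Delta u\right)(x)=c(x)u(x)-\sum_{z\sim x}c_{xz}u(z)$, applied to $u=\delta_{y}$:
\begin{equation}
\left(\Delta\delta_{y}\right)(x)=c(x)\,\delta_{y}(x)-\sum_{z\sim x}c_{xz}\,\delta_{y}(z).
\end{equation}
The three cases then drop out by inspection of $\delta_{y}$. If $y=x$, then $\delta_{y}(x)=1$ while no neighbor $z\sim x$ can equal $x$ (as $(x,x)\notin E$), so the sum vanishes and the value is $c(x)=\sum_{z\sim x}c_{xz}$, which is the quantity written $\widetilde{c}(x)$ via (\ref{eq:cond}). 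If $(x,y)\in E$, then $\delta_{y}(x)=0$ and exactly the term $z=y$ survives, leaving $-c_{xy}$. In the remaining case, $x\neq y$ with $y$ not a neighbor of $x$, both contributions vanish and the pairing is $0$.

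I do not expect any genuine obstacle: the argument is purely the bookkeeping of which indices survive, together with the identification $\widetilde{c}(x)=c(x)$ and the symmetry $c_{xy}=c_{yx}$ (which guarantees the formula is unchanged under swapping $x$ and $y$, as it must be for a Hermitian inner product). The one thing I would flag is that the third branch of the displayed formula as stated reads ``$(xy)\in E$ and $x\neq y$''; this is evidently a typo, and should be ``$(xy)\notin E$ and $x\neq y$,'' since that is precisely the configuration in which both terms of $\left(\Delta\delta_{y}\right)(x)$ drop out.
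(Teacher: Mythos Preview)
Your proof is correct and follows exactly the paper's approach: the paper's own proof is simply ``Immediate from the lemma,'' referring to Lemma~\ref{lem:deltaD}, and you have merely filled in the case analysis for $(\Delta\delta_{y})(x)$ that the paper leaves implicit. Your observation about the typo in the third case (which should read $(xy)\notin E$) is also correct.
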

\begin{proof}
Immediate from the lemma.\end{proof}
\begin{cor}
Let $\left(V,E,c,\Delta\right)$ be as above; then
\begin{equation}
\mathscr{H}_{E}\ominus\left\{ \delta_{x}\:|\: x\in V\right\} =\left\{ u\in\mathscr{H}_{E}\:|\:\Delta u=0\right\} .\label{eq:N3-3}
\end{equation}
\end{cor}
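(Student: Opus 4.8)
The plan is to read off this identity directly from \emph{Lemma \ref{lem:deltaD}}, which is the real engine: it converts the $\mathscr{H}_{E}$-pairing against a point-mass $\delta_{x}$ into pointwise evaluation of the Laplacian, $\left\langle \delta_{x},f\right\rangle _{\mathscr{H}_{E}}=\left(\Delta f\right)\left(x\right)$ for every $f\in\mathscr{H}_{E}$ and every $x\in V$. Once that translation is in hand, the corollary is essentially a tautology, so the proof should be very short.

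First I would unwind the meaning of the left-hand side. By definition $\mathscr{H}_{E}\ominus\left\{ \delta_{x}\mid x\in V\right\} $ is the orthogonal complement of the set $\left\{ \delta_{x}\right\} _{x\in V}$; and since the orthogonal complement of a set coincides with the orthogonal complement of its closed linear span, a vector $u\in\mathscr{H}_{E}$ lies in this space precisely when $\left\langle \delta_{x},u\right\rangle _{\mathscr{H}_{E}}=0$ for \emph{every} $x\in V$. This is the only point where one must be slightly careful, but it is the standard Hilbert-space fact that $S^{\perp}=\left(\overline{\operatorname{span}}\,S\right)^{\perp}$, so no genuine difficulty arises.

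Next I would substitute $f=u$ into Lemma \ref{lem:deltaD}, giving $\left\langle \delta_{x},u\right\rangle _{\mathscr{H}_{E}}=\left(\Delta u\right)\left(x\right)$ for all $x\in V$. Consequently the condition ``$\left\langle \delta_{x},u\right\rangle _{\mathscr{H}_{E}}=0$ for all $x\in V$'' is equivalent to ``$\left(\Delta u\right)\left(x\right)=0$ for all $x\in V$,'' which is exactly the statement that $\Delta u=0$, i.e.\ that $u$ is harmonic. Chaining the two equivalences yields
\[
u\in\mathscr{H}_{E}\ominus\left\{ \delta_{x}\mid x\in V\right\} \iff\Delta u=0,
\]
which is the asserted set equality \eqref{eq:N3-3}.

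I do not expect any serious obstacle here, since all the analytic work (continuity of the dipole functionals, the explicit inner-product formula \eqref{eq:HEinner}, and the computation behind Lemma \ref{lem:deltaD}) has already been carried out. The result is also consistent with—indeed, a cleaner re-derivation of—the earlier identity \eqref{eq:harm}, so as a final sanity check I would note that the two descriptions of the harmonic subspace agree.
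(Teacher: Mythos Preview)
Your argument is correct and follows exactly the same route as the paper: the paper's proof is a one-liner invoking Lemma \ref{lem:deltaD} (equation \eqref{eq:N3-1}) together with the remark that ``$\ominus$'' denotes the orthogonal complement, which is precisely what you have spelled out in detail.
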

\begin{proof}
This is immediate from (\ref{eq:N3-1}) in Lemma \ref{lem:deltaD}.
(Note that ``$\ominus$'' in (\ref{eq:N3-3}) means ortho-complement.)\end{proof}
\begin{cor}
Let $\left(V,E,c,\Delta\right)$ be as above; then for every $x\in V$,
we have:
\begin{equation}
\sum_{y\sim x}c_{xy}v_{xy}=\delta_{x}.\label{eq:N3-4}
\end{equation}
\end{cor}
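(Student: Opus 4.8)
The plan is to prove the stated $\mathscr{H}_{E}$-identity by testing both sides against an arbitrary $u\in\mathscr{H}_{E}$, using the reproducing property of the dipoles together with Lemma \ref{lem:deltaD}. First I would note that, by the local-finiteness assumption (condition (2) in \secref{setting}), the index set $\left\{ y:y\sim x\right\}$ is finite, so $w_{x}:=\sum_{y\sim x}c_{xy}v_{xy}$ is a finite linear combination of dipoles and hence a bona fide vector in $\mathscr{H}_{E}$. Since two vectors of a Hilbert space coincide as soon as they have the same inner product against every element, it suffices to show $\left\langle w_{x},u\right\rangle _{\mathscr{H}_{E}}=\left\langle \delta_{x},u\right\rangle _{\mathscr{H}_{E}}$ for all $u\in\mathscr{H}_{E}$.

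For the left-hand side, I would apply the dipole identity $\left\langle v_{xy},u\right\rangle _{\mathscr{H}_{E}}=u\left(x\right)-u\left(y\right)$ (item (5) of Lemma \ref{lem:Delta}, equivalently (\ref{eq:en7})) term by term:
\begin{equation*}
\left\langle w_{x},u\right\rangle _{\mathscr{H}_{E}}=\sum_{y\sim x}c_{xy}\left\langle v_{xy},u\right\rangle _{\mathscr{H}_{E}}=\sum_{y\sim x}c_{xy}\left(u\left(x\right)-u\left(y\right)\right).
\end{equation*}
The right member of this is exactly $\left(\Delta u\right)\left(x\right)$, by the definition of the graph Laplacian in (\ref{eq:lap}).

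For the right-hand side, Lemma \ref{lem:deltaD} gives directly $\left\langle \delta_{x},u\right\rangle _{\mathscr{H}_{E}}=\left(\Delta u\right)\left(x\right)$. Combining the two computations yields $\left\langle w_{x},u\right\rangle _{\mathscr{H}_{E}}=\left(\Delta u\right)\left(x\right)=\left\langle \delta_{x},u\right\rangle _{\mathscr{H}_{E}}$ for every $u\in\mathscr{H}_{E}$, whence $w_{x}=\delta_{x}$ in $\mathscr{H}_{E}$, which is (\ref{eq:N3-4}).

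There is no serious obstacle here; the one point worth care is that the asserted equality is an identity of vectors in $\mathscr{H}_{E}$ (equivalence classes of functions modulo constants), not a pointwise statement about functions on $V$, so the weak testing argument is the correct mode of proof rather than a direct pointwise comparison. All the content is the bookkeeping that both $\delta_{x}$ and the dipole sum $w_{x}$ reproduce the same linear functional $u\mapsto\left(\Delta u\right)\left(x\right)$ on $\mathscr{H}_{E}$; in effect this corollary is just the ``transpose'' of Lemma \ref{lem:deltaD} re-expressed through the dipole frame.
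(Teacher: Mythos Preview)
Your proof is correct and is precisely the argument the paper has in mind: the paper simply says ``Immediate from (\ref{eq:N3-1}) in Lemma \ref{lem:deltaD},'' and your weak-testing computation is exactly the unpacking of that remark. Your added observation that the sum is finite (by local finiteness) and that equality is in $\mathscr{H}_{E}$ rather than pointwise is appropriate and makes the argument more careful than the paper's one-line version.
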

\begin{proof}
Immediate from (\ref{eq:N3-1}) in Lemma \ref{lem:deltaD}.
\end{proof}

\section{\label{sec:hps}The Hilbert Spaces $\mathscr{H}_{E}$ and $l^{2}\left(V'\right)$,
and Operators Between Them}

The purpose of this section is to prepare for the two results (sections
\ref{sec:hps} and \ref{sec:Fext}) on factorization to follow.
\begin{defn}
Let $\left(V,E,c,o,\left\{ v_{x}\right\} _{x\in V'}\right)$ be specified
as in sections \ref{sec:setting}-\ref{sec:lemmas}, where $c$ is
a fixed conductance function. Set 
\begin{align}
\mathscr{D}_{l^{2}}' & :=\mbox{all finitely supported functions \ensuremath{\xi}on \ensuremath{V'}such that }\ensuremath{\sum_{x}\xi_{x}=0}.\label{eq:Dl}
\end{align}

\end{defn}
Then assuming that $V$ is infinite, we conclude that $\mathscr{D}'_{l^{2}}$
is dense in $l^{2}\left(V'\right)$. 
\begin{lem}
For $\left(\xi_{x}\right)\in\mathscr{D}_{l^{2}}'$, set 
\begin{equation}
K\left(\xi\right):=\sum_{x\in V'}\xi_{x}v_{x}\in\mathscr{H}_{E}.\label{eq:K}
\end{equation}
Then $K\left(=K_{c}\right)$ is a densely defined, and closable operator
\[
K:l^{2}\left(V'\right)\longrightarrow\mathscr{H}_{E}
\]
with domain $\mathscr{D}'_{l^{2}}$. \end{lem}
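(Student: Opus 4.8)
The goal is to show that the map $K(\xi)=\sum_{x\in V'}\xi_x v_x$ is densely defined and closable as an operator $l^2(V')\to\mathscr{H}_E$. The plan is to handle the two claims separately, since ``densely defined'' has already been essentially settled by the remark immediately preceding the statement. First I would record that $\mathscr{D}'_{l^2}$, the space of finitely supported $\xi$ on $V'$ with $\sum_x\xi_x=0$, is dense in $l^2(V')$ whenever $V$ is infinite: indeed its orthogonal complement in $l^2(V')$ consists of constant functions, and the only constant in $l^2(V')$ is $0$ when $V'$ is infinite. This gives dense definition of $K$ on $\mathscr{D}'_{l^2}$ for free, so the real content is closability.

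For closability I would invoke the standard criterion: a densely defined operator $K$ is closable if and only if its adjoint $K^*$ is densely defined; equivalently, it suffices to exhibit a densely defined operator $T:\mathscr{H}_E\to l^2(V')$ with $\langle K\xi, u\rangle_{\mathscr{H}_E}=\langle \xi, Tu\rangle_{l^2(V')}$ for all $\xi\in\mathscr{D}'_{l^2}$ and all $u$ in a dense subspace of $\mathscr{H}_E$. The natural candidate for $T$ comes from the dipole identity: by the reproducing property $\langle v_x, u\rangle_{\mathscr{H}_E}=u(x)-u(o)$ (equation (\ref{eq:N2-3})), we compute for $\xi\in\mathscr{D}'_{l^2}$,
\[
\langle K\xi, u\rangle_{\mathscr{H}_E}=\sum_{x\in V'}\overline{\xi_x}\,\bigl(u(x)-u(o)\bigr)=\sum_{x\in V'}\overline{\xi_x}\,u(x),
\]
where the term $u(o)\sum_x\overline{\xi_x}$ vanishes precisely because of the constraint $\sum_x\xi_x=0$ built into $\mathscr{D}'_{l^2}$. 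Thus formally $T u=(u(x))_{x\in V'}$, i.e.\ $K^*$ is the restriction-of-$u$-to-$V'$ map. I would then check that $K^*$ is densely defined by testing it against the dipoles $v_y$, $y\in V'$, themselves: since $v_y(x)-v_y(o)=\langle v_x,v_y\rangle_{\mathscr{H}_E}$ is well defined and $\mathscr{D}_E=\mathrm{span}\{v_y\}$ is dense in $\mathscr{H}_E$, the vectors $v_y$ lie in $\mathrm{dom}(K^*)$, giving density of $\mathrm{dom}(K^*)$ and hence closability of $K$.

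The step I expect to be the main obstacle is making the adjoint computation fully rigorous rather than merely formal. The identity $\langle K\xi,u\rangle=\sum_x\overline{\xi_x}u(x)$ holds with no convergence issue on $\mathscr{D}'_{l^2}$ because the sum is finite, but to conclude $u\in\mathrm{dom}(K^*)$ I need the functional $\xi\mapsto\sum_x\overline{\xi_x}u(x)$ to be $l^2(V')$-bounded, i.e.\ I need $(u(x))_{x\in V'}\in l^2(V')$, which is genuinely a restriction on $u$ and is not automatic for every $u\in\mathscr{H}_E$. The clean way around this is not to identify $\mathrm{dom}(K^*)$ explicitly but only to verify it is dense; for that it is enough that $\mathrm{dom}(K^*)$ contains a dense set, and the finite linear combinations of dipoles $v_y$ (for which the evaluation sequence is finitely supported, hence trivially in $l^2$) do the job. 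I would therefore phrase the argument as: $\mathscr{D}_E\subseteq\mathrm{dom}(K^*)$, $\mathscr{D}_E$ is dense in $\mathscr{H}_E$, hence $K^*$ is densely defined, hence $K$ is closable. This sidesteps the harmonic-function ambiguity and the possible failure of $(u(x))_x\in l^2$ for general $u$.
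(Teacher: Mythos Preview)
Your overall strategy---show that $K^{*}$ is densely defined by checking that finite linear combinations of dipoles lie in $\mathrm{dom}(K^{*})$---is exactly the route the paper takes. The paper writes the candidate adjoint on finite sums $\sum_{y}\xi_{y}v_{y}$ (with the extra constraint $\sum_{y}\xi_{y}=0$), obtaining $(K^{*}u)_{x}=\sum_{y}\langle v_{x},v_{y}\rangle_{\mathscr{H}_{E}}\xi_{y}$, which under the normalization $v_{y}(o)=0$ coincides with your evaluation map $(u(x))_{x\in V'}$, and then verifies the adjoint identity by the same bilinear computation you sketch.

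There is, however, a genuine error in your final paragraph: you assert that for $u$ a finite combination of dipoles the evaluation sequence $(u(x))_{x\in V'}$ is finitely supported. This is false. Already in the simplest infinite example (section~\ref{ex:1d}) the dipole $v_{y}=v_{0,y}$ satisfies $v_{y}(z)=-\sum_{k=1}^{\min(y,z)}1/a_{k}$, which is nonzero for every $z\ge 1$; it is eventually \emph{constant}, not eventually zero, and a finite combination of such functions need not even be eventually constant on a general graph. Because your test vectors $\xi$ satisfy $\sum_{x}\xi_{x}=0$, the bound you actually need is only that $(u(x)-c)_{x\in V'}\in l^{2}(V')$ for some constant $c$, not finite support---but even this weaker claim is not automatic for arbitrary $(V,E,c)$. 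You have correctly identified this as the main obstacle, but your proposed resolution does not work; in fairness, the paper also asserts the corresponding ``well-defined'' step without supplying the missing $l^{2}$ estimate.
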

\begin{proof}
We must prove that the norm-closure of the graph of $K$ in $l^{2}\times\mathscr{H}_{E}$
is again the graph of a linear operator; equivalently, if $\lim_{n\rightarrow\infty}\left\Vert \xi^{\left(n\right)}\right\Vert _{l^{2}}=0$,
$\xi^{\left(n\right)}\in\mathscr{D}_{l^{2}}$; and if $\exists u\in\mathscr{H}_{E}$
such that
\begin{equation}
\lim_{n\rightarrow\infty}\left\Vert K\left(\xi^{\left(n\right)}\right)-u\right\Vert _{\mathscr{H}_{E}}=0,\label{eq:H1}
\end{equation}
then $u=0$ in $\mathscr{H}_{E}$. 

We prove this by establishing a formula for an adjoint operator, 
\[
K^{*}:\mathscr{H}_{E}\longrightarrow l^{2}\left(V'\right)
\]
having as its domain
\begin{equation}
\left\{ \sum_{x}\xi_{x}v_{x}\:\big|\:\mbox{finite sums, \ensuremath{\xi_{x}\in\mathbb{C}}, s.t. \ensuremath{\sum_{x}\xi_{x}=0}}\right\} .\label{eq:domKadj}
\end{equation}
Setting 
\begin{gather}
K^{*}\left(\sum_{x}\xi_{x}v_{x}\right)=\left(\zeta_{x}\right),\mbox{ where }\nonumber \\
\zeta_{x}=\sum_{y}\left\langle v_{x},v_{y}\right\rangle _{\mathscr{H}_{E}}\xi_{y}\label{eq:Kadj}
\end{gather}
on the space in (\ref{eq:domKadj}), we show that this is a well-defined,
densely defined, linear operator, and that
\begin{equation}
\left\langle K^{*}\left(\sum_{x}\xi_{x}v_{x}\right),\eta\right\rangle _{l^{2}}=\left\langle \sum_{x}\xi_{x}v_{x},K\eta\right\rangle _{\mathscr{H}_{E}}\label{eq:H2}
\end{equation}
holds for all $\eta\in\mathscr{D}'_{l^{2}}$. This shows that a well-defined
adjoint $K^{*}$ operator exists (by (\ref{eq:Kadj})), and that therefore
the implication in (\ref{eq:H1}) is valid. 

Now let $\xi,\eta\in\mathscr{D}_{l^{2}}'$ as in (\ref{eq:Dl}); then
\begin{eqnarray*}
\left(\mbox{LHS}\right)_{\left(\ref{eq:H2}\right)} & = & \left\langle \sum_{y}\left\langle v_{x},v_{y}\right\rangle _{\mathscr{H}_{E}}\xi_{y},\eta\right\rangle _{l^{2}}\\
 & = & \sum_{x}\sum_{y}\overline{\xi_{y}}\left\langle v_{y},v_{x}\right\rangle _{\mathscr{H}_{E}}\eta_{x}\\
 & = & \left\langle \sum_{y}\xi_{y}v_{y},\sum_{x}\eta_{x}v_{x}\right\rangle _{\mathscr{H}_{E}}\\
 & \underset{\left(\text{by \ensuremath{\left(\ref{eq:K}\right)}}\right)}{=} & \left\langle \sum_{y}\xi_{y}v_{y},K\eta\right\rangle _{\mathscr{H}_{E}}=\left(\mbox{RHS}\right)_{\left(\ref{eq:H2}\right)}
\end{eqnarray*}

\end{proof}

\section{\label{sec:Fext}The Friedrichs Extension}

Below we fix a conductance function $c$ which turns the system $\left(V,E,c\right)$
into a connected network (see sect \ref{sec:setting}), and we will
study the $c$-graph Laplacian $\Delta$ in $\mathscr{H}_{E}$, the
energy Hilbert space.

Notice that $\Delta$ will then be densely defined in $\mathscr{H}_{E}$,
see Definition \ref{def:D} and Lemma \ref{lem:Delta}. Below we study
the Friedrichs extension of $\Delta$ when it is defined on its natural
dense domain $\mathscr{D}_{E}$ in $\mathscr{H}_{E}$.

Let $\left(V,E,c,o,\left\{ v_{x}\right\} ,\Delta\right)$ be as above,
i.e.,
\begin{itemize}[itemsep=1em]
\item $G\;\begin{cases}
V=\mbox{set of vertices, assumed countable infinite }\aleph_{0}\\
E=\mbox{edges, \ensuremath{V}assumed \ensuremath{E-}connected}
\end{cases}$
\item $c:E\longrightarrow\mathbb{R}_{+}\cup\left\{ 0\right\} $ a fixed
conductance function
\item $\Delta\left(:=\Delta_{c}\right)$ the graph Laplacian
\item $o\in V$ a fixed base-point, such that $\Delta v_{x}=\delta_{x}-\delta_{o}$ 
\item $V':=V\backslash\{o\}$
\item $\mathscr{H}_{E}:=\mbox{span}\left\{ v_{x}\:|\: x\in V'\right\} $ 
\end{itemize}
Recall we proved in \secref{setting} that $\Delta$ is a semibounded
Hermitian operator with dense domain $\mathscr{D}_{E}$ in $\mathscr{H}_{E}$. 

In this section, we shall be concerned with its Friedrichs extension,
now denoted $\Delta_{Fri}$; for details on the Friedrichs extension,
see e.g., \cite{DS88b,AG93}; and, in the special case of $\left(\Delta,\mathscr{H}_{E}\right)$,
see \cite{JoPe10,JoPe11b}.

In all cases, we have that $\Delta$, $\Delta_{Fri}$, and $\Delta^{*}$
as operators in $\mathscr{H}_{E}$ act on subspaces of functions on
$V$ via the following formula:
\begin{equation}
\left(\Delta u\right)\left(x\right)=\sum_{y\sim x}c_{xy}\left(u\left(x\right)-u\left(y\right)\right)\label{eq:F1}
\end{equation}
where $u$ is a function on the vertex set $V$.
\begin{lem}
\label{lem:lapd}As an operator in $\mathscr{H}_{E}$, the graph Laplacian
$\Delta$ (with domain $\mathscr{D}_{E}$) may, or may not, be essentially
selfadjoint. Its deficiency indices are $\left(m,m\right)$ where
\begin{equation}
m=\dim\left\{ u\in\mathscr{H}_{E}\:\big|\: s.t.\:\Delta u=-u\right\} .\label{eq:F2-1}
\end{equation}
\end{lem}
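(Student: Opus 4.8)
The plan is to show that $\Delta$ on $\mathscr{D}_E$ is a densely defined, symmetric, semibounded (in fact nonnegative) operator in $\mathscr{H}_E$, and then to invoke the standard von Neumann deficiency-index theory for such operators. The bulk of the work is already done in the excerpt: Lemma \ref{lem:Delta}\,(\ref{enu:D2}) gives symmetry $\langle\Delta u,v\rangle_{\mathscr{H}_E}=\langle u,\Delta v\rangle_{\mathscr{H}_E}$ on $\mathscr{D}_E$, part (4) gives $\langle u,\Delta u\rangle_{\mathscr{H}_E}\ge 0$ on $\mathscr{D}_E$, and the section preamble records that $\mathscr{D}_E=\mathrm{span}\{v_x\mid x\in V'\}$ is dense. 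So the first step is simply to assemble these facts into the statement that $(\Delta,\mathscr{D}_E)$ is a nonnegative symmetric operator.

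For the deficiency indices, I would recall the general fact that for a \emph{semibounded} symmetric operator the two deficiency spaces $\mathrm{def}_{\pm}=\ker(\Delta^*\mp iI)$ have equal dimension; equivalently, one may compute the deficiency index at any single non-real (indeed at any point below the lower bound) spectral parameter, because for a semibounded operator the dimension $\dim\ker(\Delta^*-\lambda I)$ is constant on each connected component of the resolvent set of the closure, and semiboundedness forces $n_+=n_-=:m$. This is the structural reason we are permitted to read off $m$ from the single real equation $\Delta u=-u$ rather than from $\Delta u=\pm i u$. Concretely I would argue: since $\Delta\ge 0$ on $\mathscr{D}_E$, the point $\lambda=-1$ lies strictly below the numerical range, so $-1$ is in the resolvent set of the Friedrichs (and of any selfadjoint) extension; hence $\dim\ker(\Delta^*+I)$ equals the common deficiency index $m$. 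The equation $\Delta^* u=-u$ for $u\in\mathscr{H}_E$ is, by the action formula \eqref{eq:F1}, exactly the pointwise equation $\Delta u=-u$, so the space in \eqref{eq:F2-1} is precisely $\ker(\Delta^*+I)$, of dimension $m$.

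The one genuine subtlety — and the step I expect to be the main obstacle — is identifying the adjoint $\Delta^*$ with the pointwise Laplacian $\Delta$ acting on the relevant function space. The statement \eqref{eq:F1} asserts that $\Delta,\Delta_{Fri},\Delta^*$ all act by the same formula on functions on $V$, but to use \eqref{eq:F2-1} one must know that every $u\in\mathscr{H}_E$ solving the weak equation $\langle u,\Delta\varphi\rangle_{\mathscr{H}_E}=-\langle u,\varphi\rangle_{\mathscr{H}_E}$ for all $\varphi\in\mathscr{D}_E$ is in fact a genuine (pointwise) solution of $\Delta u=-u$, and conversely. For the forward direction I would test against $\varphi=v_x$ and use Lemma \ref{lem:deltaD}, i.e. $\langle\delta_x,u\rangle_{\mathscr{H}_E}=(\Delta u)(x)$ together with $\Delta v_x=\delta_x-\delta_o$, to convert the weak eigenvalue equation into the pointwise one vertex-by-vertex; the converse is the integration-by-parts already encoded in symmetry. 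Once $\Delta^*=\Delta$ (pointwise) on its natural domain is pinned down, the identification $\mathrm{def}=\{u\in\mathscr{H}_E\mid\Delta u=-u\}$ is immediate and the count $m=\dim$ of that space follows.

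Finally I would remark that semiboundedness also guarantees existence of the Friedrichs extension $\Delta_{Fri}$ referenced in the section, so that the phrase ``may or may not be essentially selfadjoint'' is precisely the dichotomy $m=0$ versus $m>0$, with $m$ given by \eqref{eq:F2-1}.
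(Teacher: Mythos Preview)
Your proposal is correct and follows essentially the same route as the paper: invoke the general fact that a nonnegative symmetric operator has equal deficiency indices computable at $\lambda=-1$, then identify $\ker(\Delta^*+I)$ with the pointwise solution space $\{u\in\mathscr{H}_E\mid\Delta u=-u\}$ via the action formula. You are in fact more explicit than the paper about the adjoint identification step (testing against $v_x$ and using $\langle\delta_x,u\rangle_{\mathscr{H}_E}=(\Delta u)(x)$), which the paper compresses into a one-line ``one checks that.''
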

\begin{proof}
Recall that if $S$ is a densely defined operator in a Hilbert space
$\mathscr{H}$ s.t. 
\begin{equation}
\left\langle u,Su\right\rangle \geq0,\;\forall u\in dom\left(S\right)\label{eq:F2-2}
\end{equation}
then $S$ will automatically have indices $\left(m,m\right)$ where
$m=\dim\left(\mathscr{N}\left(S^{*}+I\right)\right)$, and where $S^{*}$
denotes the adjoint operator, i.e., 
\begin{equation}
dom\left(S^{*}\right)=\Big\{ u\in\mathscr{H}\:\big|\:\exists C<\infty\: s.t.\:\left|\left\langle u,S\varphi\right\rangle \right|\leq C\left\Vert \varphi\right\Vert ,\;\forall\varphi\in dom\left(S\right)\Big\}.\label{eq:F2-3}
\end{equation}

We may apply this to $\mathscr{H}=\mathscr{H}_{E}$, and $S:=\Delta$
on the domain $\mathscr{D}_{E}$. One checks that, if $u\in dom\left(S^{*}\right)$,
i.e., $u\in dom((\Delta|_{\mathscr{D}_{E}})^{*})$, then 
\[
\left(S^{*}u\right)\left(x\right)=\sum_{y\in E\left(x\right)}c_{xy}\left(u\left(x\right)-u\left(y\right)\right),
\]
(i.e., the pointwise action of $\Delta$ on functions) and the conclusion
in (\ref{eq:F2-1}) follows from the assertion about $\mathscr{N}\left(S^{*}+I\right)$.\end{proof}
\begin{cor}
\label{cor:P}Let $p_{xy}:=\frac{c_{xy}}{c\left(x\right)}$ be the
transition-probabilities in Remark \ref{rem:rw} (see also Fig \ref{fig:tp}),
and let 
\begin{equation}
\left(Pu\right)\left(x\right):=\sum_{y\sim x}p_{xy}u\left(y\right)\label{eq:F3-1}
\end{equation}
be the corresponding transition operator, accounting for the $p$-random
walk on $\left(V,E\right)$. Let $\left(\Delta,\mathscr{H}_{E}\right)$
be the $\mathscr{H}_{E}$-symmetric operator with domain $\mathscr{D}_{E}$,
see Definition \ref{def:D}. Then $\left(\Delta,\mathscr{H}_{E}\right)$
has deficiency indices $\left(m,m\right)$, $m>0$, if and only if
there is a function $u$ on $V$, $u\neq0$, $u\in\mathscr{H}_{E}$
satisfying 
\begin{equation}
\left(1+\frac{1}{c\left(x\right)}\right)u\left(x\right)=\left(Pu\right)\left(x\right),\;\mbox{for all \ensuremath{x\in V}.}\label{eq:F3-2}
\end{equation}
\end{cor}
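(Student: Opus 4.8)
The plan is to combine Lemma \ref{lem:lapd}, which says that $(\Delta,\mathscr{H}_E)$ has deficiency indices $(m,m)$ with
\[
m=\dim\left\{u\in\mathscr{H}_E\:\big|\:\Delta u=-u\right\},
\]
with a direct algebraic rewriting of the defect equation $\Delta u=-u$ in terms of the transition operator $P$. Since the pointwise action of $\Delta^{*}$ on $\mathscr{H}_E$ agrees with the formula (\ref{eq:F1}), establishing that $m>0$ is equivalent to producing a nonzero $u\in\mathscr{H}_E$ solving $\Delta u=-u$; so the entire content of the corollary is showing that $\Delta u=-u$ and (\ref{eq:F3-2}) are the same equation.

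**Main computation.** First I would expand $\Delta u=-u$ using the second form of the Laplacian in Definition \ref{def:lap}, namely $(\Delta u)(x)=c(x)u(x)-\sum_{y\sim x}c_{xy}u(y)$. The equation $\Delta u=-u$ then reads
\[
c(x)u(x)-\sum_{y\sim x}c_{xy}u(y)=-u(x),\quad\forall x\in V.
\]
Next I would divide through by $c(x)$ (which is legitimate since $c(x)>0$ for every vertex by connectedness and Definition \ref{def:cond}) and recall that $p_{xy}=c_{xy}/c(x)$, so that $\frac{1}{c(x)}\sum_{y\sim x}c_{xy}u(y)=(Pu)(x)$ by (\ref{eq:F3-1}). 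Rearranging yields
\[
\left(1+\frac{1}{c(x)}\right)u(x)=(Pu)(x),\quad\forall x\in V,
\]
which is exactly (\ref{eq:F3-2}). This is the heart of the proof, and it is a routine manipulation once the division by $c(x)$ is justified.

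**Assembling the equivalence.** Finally I would package the two directions. If $m>0$, Lemma \ref{lem:lapd} supplies a nonzero $u\in\mathscr{H}_E$ with $\Delta u=-u$, and the computation above shows this same $u$ satisfies (\ref{eq:F3-2}); conversely, any nonzero $u\in\mathscr{H}_E$ satisfying (\ref{eq:F3-2}) reverses the algebra to give $\Delta u=-u$, hence lies in the defect space of (\ref{eq:F2-1}), forcing $m\geq1$. I do not anticipate a genuine obstacle here: the only point requiring care is that $u$ is asserted to be an honest $\mathscr{H}_E$-function (a representative of its equivalence class) on which the pointwise operator $\Delta$ acts, which is precisely the description of $dom(S^{*})$ and its action furnished in the proof of Lemma \ref{lem:lapd}. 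So the corollary is essentially a translation of the defect condition into random-walk language, and the proof is short.
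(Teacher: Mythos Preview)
Your proposal is correct and follows essentially the same route as the paper: invoke Lemma~\ref{lem:lapd} to reduce the question to the existence of a nonzero $u\in\mathscr{H}_E$ with $\Delta u=-u$, then perform the pointwise algebraic manipulation (dividing through by $c(x)>0$) to rewrite this defect equation as $(1+1/c(x))u(x)=(Pu)(x)$. The paper's proof is just slightly terser but contains exactly these two ingredients.
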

\begin{proof}
Since $(\Delta|_{\mathscr{D}_{E}})^{*}$ acts pointwise on functions
on $V$; see Lemma \ref{lem:lapd}; we only need to verify that the
equation $-u=\Delta u$ translates into (\ref{eq:F3-2}), but we have:
\[
-u\left(x\right)=c\left(x\right)u\left(x\right)-\sum_{y\sim x}c_{xy}u\left(y\right)\Longleftrightarrow\left(1+\frac{1}{c\left(x\right)}\right)u\left(x\right)=\sum_{y\sim x}\frac{c_{xy}}{c\left(x\right)}u\left(y\right)
\]
which is the desired eq. (\ref{eq:F3-2}).

For $u$ from (\ref{eq:F3-2}) to be in $dom((\Delta|_{\mathscr{D}_{E}})^{*})$
we must have 
\[
\sum_{\left(xy\right)\in E}c_{xy}\left|u\left(x\right)-u\left(y\right)\right|^{2}<\infty
\]
as asserted.
\end{proof}
In the discussion below, we use that both operators $\Delta$ and
$P$ take real valued functions on $V$ to real valued functions,
and that $P$ is positive, satisfying $P\mathbf{1}=\mathbf{1}$ where
$\mathbf{1}$ is the constant function ``one'' on $V$.
\begin{lem}
If $u$ is a non-zero real valued function on $V$ satisfying (\ref{eq:F2-1}),
or equivalently (\ref{eq:F3-2}), and if $p\in V$ satisfies $u\left(p\right)\neq0$,
then there is an infinite path of edges $\left(x_{i}x_{i+1}\right)\in E$
such that $x_{0}=p$, and 
\begin{equation}
u\left(x_{k+1}\right)\geq\prod_{i=0}^{k}\left(1+\frac{1}{c\left(x_{i}\right)}\right)u\left(p\right).\label{eq:F4-1}
\end{equation}
\end{lem}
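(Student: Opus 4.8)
The plan is to build the path greedily, one vertex at a time, using the eigenvalue equation (\ref{eq:F3-2}) together with the fact (Remark \ref{rem:rw}) that $\left(p_{xy}\right)_{y\sim x}$ is a probability distribution. First I would reduce to the case $u(p)>0$: equation (\ref{eq:F3-2}) is linear and homogeneous, so $-u$ satisfies it whenever $u$ does, and by replacing $u$ with $-u$ if necessary we may assume $u(p)>0$.

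The heart of the argument is a one-step averaging (pigeonhole) observation. Suppose a vertex $x_k$ has been selected with $u(x_k)>0$. Reading (\ref{eq:F3-2}) at $x=x_k$ as
\[
\sum_{y\sim x_k} p_{x_k y}\, u(y) = \left(1+\frac{1}{c(x_k)}\right)u(x_k),
\]
the left-hand side is a convex combination of the neighbor values $u(y)$: the weights $p_{x_k y}$ are strictly positive (since $c_{x_k y}>0$ on edges) and sum to $1$. Because the neighbor set $E(x_k)$ is finite (assumption (2) of the setup), the maximum over $y\sim x_k$ is attained and is at least the average, so there is a neighbor, call it $x_{k+1}$, with
\[
u(x_{k+1}) \;\geq\; \left(1+\frac{1}{c(x_k)}\right)u(x_k).
\]

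I would then run this as an induction starting from $x_0=p$. The mechanism that keeps the construction alive is the elementary inequality $1+1/c(x_k)>1$: it forces $u(x_{k+1})\geq\left(1+1/c(x_k)\right)u(x_k)>u(x_k)>0$, so positivity is preserved at every stage and, in fact, $u$ is \emph{strictly} increasing along the path. Consequently the greedy selection never stalls and yields a genuinely infinite path (its vertices are automatically distinct, since the values strictly increase, so it is a path rather than merely a walk). Multiplying the one-step estimates telescopically gives
\[
u(x_{k+1})\;\geq\;\prod_{i=0}^{k}\left(1+\frac{1}{c(x_i)}\right)u(p),
\]
which is exactly (\ref{eq:F4-1}).

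The only real subtlety, and hence the main obstacle such as it is, is confirming that the construction cannot terminate: one must verify that each newly chosen vertex again carries a strictly positive value so the averaging step can be reapplied. This is precisely what the factor $1+1/c(x_k)>1$ guarantees, so the induction closes and the infinite path is produced without extra hypotheses.
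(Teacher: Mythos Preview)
Your proof is correct and follows essentially the same route as the paper: reduce to $u(p)>0$, then greedily select at each step a neighbor realizing (at least) the $P$-average, invoke (\ref{eq:F3-2}) to get the one-step multiplicative gain $1+1/c(x_k)$, and telescope. Your write-up is in fact a bit more careful than the paper's, since you explicitly justify why the induction never stalls (strict increase keeps $u(x_k)>0$) and note that the resulting vertices are pairwise distinct.
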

\begin{proof}
We may assume without loss of generality that $u\left(p\right)>0$.
Set $x_{0}=p$, and $x_{1}:=\arg\max\left\{ u\left(y\right)\:|\: y\sim x_{0}\right\} $,
so $u\left(x_{1}\right)=\max u\big|_{E\left(x_{0}\right)}$; then
\[
u\left(x_{1}\right)\geq\sum_{y\sim x_{0}}p_{x_{0}y}u\left(y\right)=\left(Pu\right)\left(x_{0}\right)=\left(1+\frac{1}{c\left(x_{0}\right)}\right)u\left(x_{0}\right)
\]
where we used (\ref{eq:F3-2}) in the last step.

Now for the induction: Suppose $x_{1},\ldots,x_{k}$ have been found
as specified; then set $x_{k+1}:=\arg\max\left\{ u\left(y\right)\:|\: y\sim x_{k}\right\} $,
so 
\[
u\left(x_{k+1}\right)\geq\left(Pu\right)\left(x_{k}\right)=\left(1+\frac{1}{c\left(x_{k}\right)}\right)u\left(x_{k}\right).
\]
A final iteration then yields the desired conclusion (\ref{eq:F4-1}). \end{proof}
\begin{rem}
In section \ref{ex:1d} below, we illustrate a family of systems $\left(V,E,c\right)$
where $\Delta$ in $\mathscr{H}_{E}$ has indices $\left(1,1\right)$.
In these examples, $V=\mathbb{Z}_{+}\cup\left\{ 0\right\} $, and
the edges $E$ consists of nearest neighbor links, i.e., if $x\in\mathbb{Z}_{+}$,
\[
E\left(x\right)=\left\{ x-1,x+1\right\} ,\mbox{ while }E\left(0\right)=\left\{ 1\right\} .
\]
\end{rem}
\begin{lem}
\label{lem:F1}A function $u$ on $V$ is in the domain of $\Delta_{Fri}$
(the Friedrichs extension) if and only if $u$ is in the completion
of $\mathscr{D}_{E}$ with respect to the quadratic form
\begin{gather}
\mathscr{D}_{E}\ni\varphi\longmapsto\left\langle \varphi,\Delta\varphi\right\rangle _{\mathscr{H}_{E}}\in\mathbb{R}_{+}\cup\left\{ 0\right\} ,\mbox{ and}\label{eq:F2}\\
\Delta u\in\mathscr{H}_{E}.\label{eq:F3}
\end{gather}
\end{lem}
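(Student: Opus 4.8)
The plan is to derive the stated domain description directly from the standard construction of the Friedrichs extension, feeding it two structural facts already available: that $\Delta$ is non-negative and symmetric on $\mathscr{D}_{E}$ (Lemma \ref{lem:Delta}), and that its adjoint acts by the pointwise Laplacian formula (Lemma \ref{lem:lapd}). First I would introduce the form inner product
$$\langle\varphi,\psi\rangle_{Fri}:=\langle\varphi,\psi\rangle_{\mathscr{H}_{E}}+\langle\varphi,\Delta\psi\rangle_{\mathscr{H}_{E}},\qquad\varphi,\psi\in\mathscr{D}_{E},$$
which is positive definite because $\langle\varphi,\Delta\varphi\rangle_{\mathscr{H}_{E}}\geq0$. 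Let $\mathscr{B}$ be the completion of $\mathscr{D}_{E}$ in the associated norm $\|\cdot\|_{Fri}$. Since $\|\varphi\|_{\mathscr{H}_{E}}\leq\|\varphi\|_{Fri}$, the inclusion $\mathscr{D}_{E}\hookrightarrow\mathscr{H}_{E}$ extends to a continuous map $\mathscr{B}\to\mathscr{H}_{E}$; because $\Delta$ is symmetric and semibounded this map is injective, so $\mathscr{B}$ is realized as a subspace of $\mathscr{H}_{E}$ and ``$u$ lies in the completion (\ref{eq:F2})'' means precisely $u\in\mathscr{B}$. I would then quote the standard theorem (see \cite{DS88b,AG93}) that the Friedrichs extension is the self-adjoint operator attached to this closed form, and in particular
$$\mathrm{dom}(\Delta_{Fri})=\mathscr{B}\cap\mathrm{dom}(\Delta^{*}),\qquad\Delta_{Fri}=\Delta^{*}\big|_{\mathscr{B}\cap\mathrm{dom}(\Delta^{*})}.$$

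It therefore suffices to match the two conditions of the lemma with the two conditions on the right: (\ref{eq:F2}) is by construction the statement $u\in\mathscr{B}$, so I only need to show that, for $u\in\mathscr{H}_{E}$, membership $u\in\mathrm{dom}(\Delta^{*})$ is equivalent to (\ref{eq:F3}), i.e.\ to $\Delta u\in\mathscr{H}_{E}$ with $\Delta$ computed pointwise via (\ref{eq:F1}). The forward implication is Lemma \ref{lem:lapd}: if $u\in\mathrm{dom}(\Delta^{*})$ then $\Delta^{*}u$ is given by the pointwise formula, so $\Delta u\in\mathscr{H}_{E}$. For the converse I would establish the boundary-term-free Green identity
$$\langle\Delta\varphi,u\rangle_{\mathscr{H}_{E}}=\langle\varphi,\Delta u\rangle_{\mathscr{H}_{E}},\qquad\varphi\in\mathscr{D}_{E},$$
valid whenever the pointwise $\Delta u$ lies in $\mathscr{H}_{E}$.

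This identity is where the content sits, and I expect it to be the main (though short) obstacle, since it is what forces the two descriptions to coincide. To prove it I would write $\varphi=\sum_{x}\xi_{x}v_{x}$ (a finite sum over $V'$), so that $\Delta\varphi=\sum_{x}\xi_{x}(\delta_{x}-\delta_{o})$; evaluating the left side with Lemma \ref{lem:deltaD} gives $\sum_{x}\overline{\xi_{x}}\big((\Delta u)(x)-(\Delta u)(o)\big)$, while evaluating the right side with the dipole reproducing property (\ref{eq:en7}) gives the same expression. Granting the identity, $|\langle u,\Delta\varphi\rangle_{\mathscr{H}_{E}}|=|\langle\Delta u,\varphi\rangle_{\mathscr{H}_{E}}|\leq\|\Delta u\|_{\mathscr{H}_{E}}\|\varphi\|_{\mathscr{H}_{E}}$ for all $\varphi\in\mathscr{D}_{E}$, which is exactly the boundedness criterion (\ref{eq:F2-3}) placing $u$ in $\mathrm{dom}(\Delta^{*})$. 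Combining, $u\in\mathrm{dom}(\Delta_{Fri})$ iff $u\in\mathscr{B}$ and $\Delta u\in\mathscr{H}_{E}$, i.e.\ iff (\ref{eq:F2}) and (\ref{eq:F3}) both hold. Beyond this identity, the only points requiring care are the injectivity of $\mathscr{B}\hookrightarrow\mathscr{H}_{E}$ (closability of the form, guaranteed by semiboundedness via Lemma \ref{lem:Delta}) and the precise abstract characterization of $\mathrm{dom}(\Delta_{Fri})$, both of which are standard.
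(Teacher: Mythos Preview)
Your proposal is correct and follows essentially the same route as the paper: both invoke the abstract Friedrichs-extension characterization $\mathrm{dom}(\Delta_{Fri})=\mathscr{B}\cap\mathrm{dom}(\Delta^{*})$ from \cite{DS88b,AG93} and then identify the two abstract conditions with (\ref{eq:F2}) and (\ref{eq:F3}). The only difference is in emphasis---the paper supplements its proof with the explicit identity $\langle\varphi,\Delta\varphi\rangle_{\mathscr{H}_{E}}=\sum_{x}|\xi_{x}|^{2}$ (for $\sum_{x}\xi_{x}=0$), used later in Lemma~\ref{lem:LL} and Theorem~\ref{thm:Deltaf}, whereas you fill in the Green identity showing $(\ref{eq:F3})\Leftrightarrow u\in\mathrm{dom}(\Delta^{*})$, which the paper leaves implicit.
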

\begin{proof}
The assertion follows from an application of the characterization
of $\Delta_{Fri}$ in \cite{DS88b,AG93} combined with the following
fact:

If $\varphi=\sum_{x\in V}\xi_{x}v_{x}$ is a finite sum with coefficient
$\left(\xi_{x}\right)$ satisfying $\sum_{x}\xi_{x}=0$, then 
\begin{equation}
\left\langle \varphi,\Delta\varphi\right\rangle _{\mathscr{H}_{E}}=\sum_{x\in V'}\left|\xi_{x}\right|^{2}.\label{eq:F5}
\end{equation}
Moreover, eq. (\ref{eq:F3}) holds if and only if 
\[
\left\Vert \Delta u\right\Vert _{\mathscr{H}_{E}}^{2}:=\frac{1}{2}\sum_{\left(x,y\right)\in E}c_{xy}\left|\left(\Delta u\right)\left(x\right)-\left(\Delta u\right)\left(y\right)\right|^{2}<\infty.
\]

We now prove formula (\ref{eq:F5}): 

Assume $\varphi=\sum_{x\in V'}\xi_{x}v_{x}$ is as stated; then 
\begin{align*}
\left\langle \varphi,\Delta\varphi\right\rangle _{\mathscr{H}_{E}} & =\left\langle \text{\ensuremath{\sum}}_{x}\xi_{x}v_{x},\Delta\big(\text{\ensuremath{\sum}}_{x}\xi_{y}v_{y}\big)\right\rangle _{\mathscr{H}_{E}}\\
 & =\left\langle \text{\ensuremath{\sum}}_{x}\xi_{x}v_{x},\text{\ensuremath{\sum}}_{y}\xi_{y}\left(\delta_{y}-\delta_{o}\right)\right\rangle _{\mathscr{H}_{E}}\\
 & =\text{\ensuremath{\sum}}_{x}\text{\ensuremath{\sum}}_{y}\overline{\xi_{x}}\xi_{y}\left\langle v_{x},\delta_{y}\right\rangle _{\mathscr{H}_{E}}\;(\mbox{since }\tiny\text{\ensuremath{\sum}}_{y}\xi_{y}=0)\\
 & =\text{\ensuremath{\sum}}_{x}\left|\xi_{x}\right|^{2}.
\end{align*}
\end{proof}
\begin{lem}
\label{lem:LL}Let $\mathscr{H}_{E}$ denote the completion of $\mathscr{D}_{E}$;
and let $\mathscr{D}_{l^{2}}'$ be the dense subspace in $l^{2}\left(V'\right)$,
given by $\sum_{x\in V'}\xi_{x}=0$, $\sum_{x}\left|\xi_{x}\right|^{2}<\infty$.
Set 
\begin{equation}
L\left(\xi_{x}\right):=\sum_{x}\xi_{x}\delta_{x};\label{eq:F6}
\end{equation}
then $L:l^{2}\left(V'\right)\longrightarrow\mathscr{H}_{E}$ is a
closable operator with dense domain $\mathscr{D}_{l^{2}}'$; and the
corresponding adjoint operator
\[
L^{*}:\mathscr{H}_{E}\longrightarrow l^{2}\left(V'\right)
\]
satisfies
\begin{equation}
L^{*}\left(\text{\ensuremath{\sum}}_{x\in V'}\xi_{x}v_{x}\right)=\xi.\label{eq:F7}
\end{equation}
\end{lem}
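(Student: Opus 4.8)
The plan is to establish that $L$ is densely defined and closable by exhibiting a densely defined adjoint, and simultaneously to verify the formula $(\ref{eq:F7})$ for $L^*$. Since $\mathscr{D}'_{l^2}$ is dense in $l^2(V')$ (as noted in the excerpt, using that $V$ is infinite), the operator $L$ given by $(\ref{eq:F6})$ is densely defined. To prove closability, I would show that its adjoint $L^*$ has dense domain; by the standard criterion, a densely defined operator is closable precisely when its adjoint is densely defined. The candidate formula $(\ref{eq:F7})$ asserts that $L^*$ sends a finite dipole-combination $\sum_{x\in V'}\xi_x v_x$ back to the coefficient sequence $\xi=(\xi_x)$, so the proposed domain of $L^*$ contains the space $\mathscr{D}_E = \mathrm{span}\{v_x : x\in V'\}$, which is dense in $\mathscr{H}_E$ by construction.

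The key computational step is to verify the adjoint relation
\[
\left\langle L^*\Big(\textstyle\sum_{x\in V'}\xi_x v_x\Big),\eta\right\rangle_{l^2} = \left\langle \textstyle\sum_{x\in V'}\xi_x v_x, L\eta\right\rangle_{\mathscr{H}_E}
\]
for all $\eta\in\mathscr{D}'_{l^2}$. Expanding the right-hand side using $(\ref{eq:F6})$ gives $\sum_x\sum_y\overline{\xi_x}\eta_y\langle v_x,\delta_y\rangle_{\mathscr{H}_E}$. The crucial ingredient is the pairing $\langle v_x,\delta_y\rangle_{\mathscr{H}_E}$: applying property $(6)$ of Lemma \ref{lem:Delta}, namely $\langle v_{x,o},u\rangle_{\mathscr{H}_E}=u(x)-u(o)$ with $u=\delta_y$, one gets $\langle v_x,\delta_y\rangle_{\mathscr{H}_E}=\delta_y(x)-\delta_y(o)=\delta_{xy}$ for $x,y\in V'$ (since $o\notin V'$ forces $\delta_y(o)=0$). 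Substituting this collapses the double sum to $\sum_x\overline{\xi_x}\eta_x=\langle\xi,\eta\rangle_{l^2}$, which matches the left-hand side exactly when $L^*(\sum_x\xi_x v_x)=\xi$. This confirms $(\ref{eq:F7})$ and shows $L^*$ is well-defined and linear on a dense domain.

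I expect the main subtlety to lie not in the algebra but in confirming that the pairing formula $\langle v_x,\delta_y\rangle_{\mathscr{H}_E}=\delta_{xy}$ is applied consistently with the base-point normalization. One must keep careful track of the fact that the index set is $V'=V\setminus\{o\}$, so the $\delta_y(o)$ terms vanish identically and no correction from the base point survives; this is precisely what makes $L^*$ the clean coefficient map rather than something involving $\delta_o$. Once the adjoint relation is verified on the dense domain $\mathscr{D}'_{l^2}$, the existence of a densely defined $L^*$ (its domain containing $\mathscr{D}_E$) immediately yields closability of $L$, completing the proof. The argument parallels the treatment of $K$ in the preceding section, with $\delta_x$ in place of $v_x$ as the image vectors, and the orthogonality relation $\langle v_x,\delta_y\rangle_{\mathscr{H}_E}=\delta_{xy}$ playing the role that $\langle v_x,v_y\rangle_{\mathscr{H}_E}$ played there.
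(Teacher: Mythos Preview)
Your proposal is correct and follows essentially the same strategy as the paper: verify the adjoint relation $\langle L\xi,u\rangle_{\mathscr{H}_E}=\langle\xi,\eta\rangle_{l^2}$ for $u=\sum_y\eta_y v_y$, thereby exhibiting a densely defined $L^*$ and concluding closability. The only difference is cosmetic: the paper rewrites $L\xi=\sum_x\xi_x\delta_x=\Delta\bigl(\sum_x\xi_x v_x\bigr)$ (using $\sum_x\xi_x=0$), invokes the $\mathscr{H}_E$-symmetry of $\Delta$ on $\mathscr{D}_E$, and then cites formula~(\ref{eq:F5}); you instead appeal directly to the pairing $\langle v_x,\delta_y\rangle_{\mathscr{H}_E}=\delta_y(x)-\delta_y(o)=\delta_{xy}$ for $x,y\in V'$, which is exactly the identity underlying the proof of~(\ref{eq:F5}) and is recorded separately in Remark~\ref{rem:hsp}. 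One minor correction: the dipole property you invoke is item~(5) of Lemma~\ref{lem:Delta}, not item~(6).
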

\begin{proof}
To prove the assertion, we must show that, if $\xi$ is a finitely
supported function on $V'$ such that $\sum_{x\in V'}\xi_{x}=0$,
then 
\begin{alignat}{1}
\left\langle L\left(\xi\right),u\right\rangle _{\mathscr{H}_{E}} & =\left\langle \xi,\eta\right\rangle _{l^{2}}\mbox{ where}\label{eq:F8}\\
u & =\sum_{y\in V'}\eta_{y}v_{y}.\label{eq:F9}
\end{alignat}
We prove (\ref{eq:F8}) as follows:
\begin{eqnarray*}
\left(\mbox{LHS}\right)_{\left(\ref{eq:F8}\right)} & = & \left\langle \Delta(\text{\ensuremath{\sum}}_{x}\xi_{x}v_{x}),\text{\ensuremath{\sum}}_{y}\eta_{y}v_{y}\right\rangle _{\mathscr{H}_{E}}\\
 & = & \left\langle \text{\ensuremath{\sum}}_{x}\xi_{x}v_{x},\Delta(\text{\ensuremath{\sum}}_{y}\eta_{y}v_{y})\right\rangle _{\mathscr{H}_{E}}\\
 & = & \text{\ensuremath{\sum}}_{x}\overline{\xi_{x}}\eta_{x}=\left(\mbox{RHS}\right)_{\left(\ref{eq:F8}\right)}
\end{eqnarray*}
where we used formula (\ref{eq:F5}) in the last step of the computation.\end{proof}
\begin{rem}
\label{rem:hsp}To understand $\Delta$ as an operator in the respective
subspaces of $\mathscr{H}_{E}$ recall that $\mathscr{H}_{E}$ contains
two systems of vectors $\left\{ \delta_{x}\right\} $ and $\left\{ v_{x}\right\} $
both indexed by $V'$. 

Neither of the two systems is orthogonal in the inner product of $\mathscr{H}_{E}$. 

We have $\left\langle v_{x},v_{y}\right\rangle _{\mathscr{H}_{E}}=v_{x}\left(y\right)=v_{y}\left(x\right)$;
recall our normalization $v_{x}\left(o\right)=0$ where $o$ is the
fixed base-point. Moreover (see Coroll \ref{cor:deltain}): 
\[
\left\langle \delta_{x},\delta_{y}\right\rangle _{\mathscr{H}_{E}}=\begin{cases}
-c_{xy} & \mbox{if \ensuremath{\left(x,y\right)\in E}}\\
c\left(x\right) & \mbox{if \ensuremath{x=y}}\\
0 & \mbox{otherwise}
\end{cases};\mbox{ and}
\]
\[
\left\langle \delta_{x},v_{y}\right\rangle _{\mathscr{H}_{E}}=\begin{cases}
\delta_{xy} & \mbox{if \ensuremath{x,y\in V'}}\\
-1 & \mbox{if \ensuremath{x=o,y\in V'}}.
\end{cases}
\]
\end{rem}
\begin{cor}
If $x\in V'$ then $\delta_{x}\in\mathscr{H}_{E}$, and 
\begin{equation}
\delta_{x}\left(\cdot\right)=c\left(x\right)v_{x}\left(\cdot\right)-\sum_{y\sim x}c_{xy}v_{y}\left(\cdot\right).\label{eq:F1-1}
\end{equation}
\end{cor}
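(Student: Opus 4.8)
The plan is to prove the stated identity by the standard device of testing both sides against an arbitrary $u \in \mathscr{H}_E$ and invoking the reproducing properties already established. First I would observe that the right-hand side $c(x)v_x - \sum_{y\sim x}c_{xy}v_y$ is a \emph{finite} linear combination of dipole vectors --- finite because $E(x)$ is finite by the standing assumption (2) on $(V,E)$ --- and each $v_z \in \mathscr{H}_E$ by Lemma \ref{lem:dipole}; hence the right-hand side lies in $\mathscr{H}_E$. Once the asserted equality is verified, this simultaneously yields the membership claim $\delta_x \in \mathscr{H}_E$ for $x \in V'$.

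Next, for the equality itself, I would pair each side with an arbitrary $u \in \mathscr{H}_E$. On the left, Lemma \ref{lem:deltaD} (eq. (\ref{eq:N3-1})) gives $\langle \delta_x, u\rangle_{\mathscr{H}_E} = (\Delta u)(x)$. On the right, using the reproducing identity $\langle v_z, u\rangle_{\mathscr{H}_E} = u(z) - u(o)$ (the case $y=o$ of (\ref{eq:en7}), with $v_z := v_{zo}$) together with the fact that the coefficients $c(x)$ and $c_{xy}$ are real, I get
\begin{align*}
\Big\langle c(x)v_x - \sum_{y\sim x}c_{xy}v_y,\ u\Big\rangle_{\mathscr{H}_E}
&= c(x)\big(u(x)-u(o)\big) - \sum_{y\sim x}c_{xy}\big(u(y)-u(o)\big).
\end{align*}
The decisive point is that the two $u(o)$ contributions cancel, precisely because $\sum_{y\sim x}c_{xy} = c(x)$ by the definition (\ref{eq:cond}) of $c(x)$; what survives is $c(x)u(x) - \sum_{y\sim x}c_{xy}u(y)$, which equals $(\Delta u)(x)$ by (\ref{eq:lap}). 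Thus both pairings agree for every $u \in \mathscr{H}_E$, and the identity follows.

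A more economical route, which I would at least mention, is to note that $v_{xy} = v_x - v_y$ --- immediate from uniqueness of dipoles (Lemma \ref{lem:dipole}), since both sides have the same inner product $u(x)-u(y)$ against every $u$ --- and then substitute into the previously established relation $\sum_{y\sim x}c_{xy}v_{xy} = \delta_x$ of (\ref{eq:N3-4}), collecting the coefficient of $v_x$ into $\sum_{y\sim x}c_{xy} = c(x)$. Here one must keep the base-point bookkeeping straight: if $o \sim x$, the term $y=o$ in the sum contributes $c_{xo}v_o$ with $v_o = v_{oo} = 0$, so it is harmless. This base-point accounting (the cancellation of the $u(o)$ terms, or equivalently the vanishing of $v_o$) is the only genuinely delicate point; everything else is a direct substitution, so I expect no real obstacle.
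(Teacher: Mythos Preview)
Your proof is correct and follows essentially the same approach as the paper: both verify the identity by pairing with test vectors in $\mathscr{H}_{E}$ and invoking the dipole reproducing property. The only cosmetic difference is that the paper tests against the spanning system $\{v_{z}\}_{z\in V'}$ using the precomputed inner products of Remark~\ref{rem:hsp}, whereas you test against an arbitrary $u\in\mathscr{H}_{E}$ via Lemma~\ref{lem:deltaD} and the definition of $\Delta$; your alternative route through (\ref{eq:N3-4}) and $v_{xy}=v_{x}-v_{y}$ is also valid and equally close in spirit.
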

\begin{proof}
To show this, it is enough to check equality of 
\[
\left\langle \mbox{LHS}_{\left(\ref{eq:F1-1}\right)},v_{x}\right\rangle _{\mathscr{H}_{E}}=\left\langle \mbox{RHS}_{\left(\ref{eq:F1-1}\right)},v_{x}\right\rangle _{\mathscr{H}_{E}}\;\mbox{for all \ensuremath{x\in V}};
\]
and this follows from an application of the formulas in Remark \ref{rem:hsp}
above.\end{proof}
\begin{thm}
\label{thm:Deltaf}Let $\left(V,E,c,\Delta\left(=\Delta_{c}\right),\mathscr{H}_{E},\Delta_{Fri}\right)$
be as above. Let $L$ and $L^{*}$ be the closed operators from Lemma
\ref{lem:LL}, then
\begin{enumerate}[label=(\roman{enumi}),ref=\roman{enumi}]
\item \label{enu:LL1}$LL^{*}$ is selfadjoint, and
\item \label{enu:.LL2}$LL^{*}=\Delta_{Fri}$.
\end{enumerate}
\end{thm}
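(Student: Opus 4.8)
The plan is to derive both claims from the general von Neumann theory of the operators $T^{*}T$ and $TT^{*}$ attached to a closed densely defined $T$, and then to match the resulting closed quadratic form against the energy form that defines $\Delta_{Fri}$.

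For (\ref{enu:LL1}) I would first record that, by Lemma \ref{lem:LL}, $L$ is densely defined (its domain $\mathscr{D}'_{l^{2}}$ is dense in $l^{2}(V')$ precisely because $V$ is infinite) and closable, so $T:=\overline{L}$ is a closed densely defined operator $l^{2}(V')\to\mathscr{H}_{E}$ with $T^{*}=L^{*}$. Von Neumann's theorem then gives at once that $TT^{*}=\overline{L}\,L^{*}$ is selfadjoint and non-negative; this is the operator meant by $LL^{*}$, so (\ref{enu:LL1}) follows with no network-specific input. For (\ref{enu:.LL2}) the heart is to show this operator is the Friedrichs extension. First I would check that $LL^{*}$ genuinely extends the symmetric operator $\Delta|_{\mathscr{D}_{E}}$: for $u=\sum_{x\in V'}\xi_{x}v_{x}$ with $\sum_{x}\xi_{x}=0$ we have $L^{*}u=\xi$ by (\ref{eq:F7}), and since $\sum_{x}\xi_{x}=0$ the base-point terms in $\Delta v_{x}=\delta_{x}-\delta_{o}$ (see (\ref{eq:fsoln1})) cancel, giving $L\xi=\sum_{x}\xi_{x}\delta_{x}=\sum_{x}\xi_{x}(\delta_{x}-\delta_{o})=\Delta u$; hence $LL^{*}u=\Delta u$ on this dense domain, so $LL^{*}$ is a non-negative selfadjoint extension of $\Delta$.

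Next I would identify the closed form. For the selfadjoint operator $A=\overline{L}\,L^{*}$ the associated closed quadratic form is $u\mapsto\|L^{*}u\|_{l^{2}}^{2}$ with form domain $\operatorname{dom}(L^{*})$. On the dipole core this form equals the energy form: combining (\ref{eq:F5}) and (\ref{eq:F7}) gives $\langle u,\Delta u\rangle_{\mathscr{H}_{E}}=\sum_{x\in V'}|\xi_{x}|^{2}=\|L^{*}u\|_{l^{2}}^{2}$. Since the closed form of a non-negative selfadjoint operator is unique and determines the operator, it suffices to see that $u\mapsto\|L^{*}u\|^{2}$ is exactly the closure of $\varphi\mapsto\langle\varphi,\Delta\varphi\rangle_{\mathscr{H}_{E}}$ on $\mathscr{D}_{E}$; equivalently, that $\operatorname{dom}(L^{*})$ is the form-completion of $\mathscr{D}_{E}$. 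Granting this, the operator of that closed form is on one side $A=LL^{*}$ and on the other side, by definition, $\Delta_{Fri}$, so (\ref{enu:.LL2}) follows. As a cross-check one can instead verify the explicit domain description of Lemma \ref{lem:F1} directly: $u\in\operatorname{dom}(LL^{*})$ means $u\in\operatorname{dom}(L^{*})$ (the form condition (\ref{eq:F2})) together with $L^{*}u\in\operatorname{dom}(\overline{L})$, and this last membership is what encodes $\Delta u\in\mathscr{H}_{E}$, i.e. (\ref{eq:F3}).

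The step I expect to be the main obstacle is precisely this domain/core matching: showing that $\operatorname{dom}(L^{*})$, with the graph norm $\|u\|_{\mathscr{H}_{E}}^{2}+\|L^{*}u\|_{l^{2}}^{2}$, coincides with the completion of $\mathscr{D}_{E}$ under the energy-form norm, i.e. that $\mathscr{D}_{E}$ is a form core for $LL^{*}$. Equivalently one must show $\mathscr{D}'_{l^{2}}$ is a core for $\overline{L}$ and translate $L^{*}u\in\operatorname{dom}(\overline{L})$ into the pointwise condition $\Delta u\in\mathscr{H}_{E}$ of (\ref{eq:F3}). I would handle this through the identity $L=\Delta K$ on $\mathscr{D}'_{l^{2}}$ (with $K\xi=\sum_{x}\xi_{x}v_{x}$ as in (\ref{eq:K})), the density of $\mathscr{D}'_{l^{2}}$ in $l^{2}(V')$, and the characterization of $\Delta_{Fri}$ from \cite{DS88b,AG93} recorded in Lemma \ref{lem:F1}. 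The delicate point is the interplay between the constraint $\sum_{x}\xi_{x}=0$ on the core and its disappearance after closing in $l^{2}(V')$, which is what reconciles $L^{*}$ with the pointwise Laplacian and pins the selfadjoint extension down to the Friedrichs one rather than to another member of the $(m,m)$ family of extensions from Lemma \ref{lem:lapd}.
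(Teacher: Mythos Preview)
Your argument for (\ref{enu:LL1}) and your verification that $LL^{*}$ restricts to $\Delta$ on the dipole core are exactly what the paper does: the paper invokes the von Neumann fact that $LL^{*}$ is automatically selfadjoint for any closed densely defined $L$, and then runs precisely your chain
\[
LL^{*}u=L\xi=\sum_{x}\xi_{x}\delta_{x}=\sum_{x}\xi_{x}(\delta_{x}-\delta_{o})=\sum_{x}\xi_{x}\Delta v_{x}=\Delta u
\]
for $u=\sum_{x}\xi_{x}v_{x}$ with $\sum_{x}\xi_{x}=0$, using (\ref{eq:F7}), (\ref{eq:F6}), and (\ref{eq:fsoln1}). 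The paper's proof in fact \emph{stops} at this point and declares (\ref{enu:.LL2}) proved.

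Where you diverge from the paper is in going on to argue, via the closed-form identification $u\mapsto\|L^{*}u\|_{l^{2}}^{2}$ and formula (\ref{eq:F5}), that the selfadjoint extension so obtained is specifically the Friedrichs one rather than some other member of the $(m,m)$ family. You are right that this is a genuine issue: the bare equality $LL^{*}u=\Delta u$ on the core only shows $LL^{*}$ is \emph{a} non-negative selfadjoint extension, and when the deficiency indices are positive there are many of those. The paper leaves this step implicit, presumably because Lemma~\ref{lem:F1} together with (\ref{eq:F5}) and (\ref{eq:F7}) makes the form-domain match $\langle u,\Delta u\rangle_{\mathscr{H}_{E}}=\|L^{*}u\|_{l^{2}}^{2}$ transparent. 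Your instinct to spell this out, and your identification of the core question (that $\mathscr{D}_{E}$ is a form core for $LL^{*}$, equivalently that $\operatorname{dom}(L^{*})$ is the form-completion), is more careful than the paper's own write-up; the route you sketch through Lemma~\ref{lem:F1} and the references \cite{DS88b,AG93} is the natural way to close it.
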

\begin{proof}
Conclusion (\ref{enu:LL1}) follows for every closed operator $L$
with dense domain, we have that $LL^{*}$ is selfadjoint. To prove
(\ref{enu:.LL2}), we must verify that
\begin{equation}
LL^{*}u=\Delta u\;\mbox{for all \ensuremath{u\in\mathscr{D}_{E}'}.}\label{eq:F10}
\end{equation}
\uline{Proof of \mbox{(\ref{eq:F10})}:} Let $u=\sum_{x\in V'}\xi_{x}v_{x}$
be a finite sum such that $\sum_{x}\xi_{x}=0$; then
\begin{eqnarray*}
LL^{*}u & \underset{\left(\text{by \ensuremath{\left(\ref{eq:F7}\right)}}\right)}{=} & L\xi\\
 & \underset{\left(\text{by \ensuremath{\left(\ref{eq:F6}\right)}}\right)}{=} & \sum_{x\in V'}\xi_{x}\delta_{x}\\
 & \underset{\left(\text{since }\sum\xi_{x}=0\right)}{=} & \sum_{x\in V'}\xi_{x}\left(\delta_{x}-\delta_{o}\right)\\
 & \underset{\left(\text{by \ensuremath{\left(\ref{eq:F6}\right)}}\right)}{=} & \sum_{x\in V'}\xi_{x}\Delta v_{x}\\
 & \underset{\text{since finite sum}}{=} & \Delta\left(\sum_{x}\xi_{x}v_{x}\right)\\
 & = & \Delta u=\left(\mbox{RHS}\right)_{\left(\ref{eq:F10}\right)}.
\end{eqnarray*}
\end{proof}
\begin{cor}
We have the Greens-Gauss identity:
\begin{equation}
\left(\Delta_{y}\left(\left\langle v_{x},v_{y}\right\rangle \right)\right)\left(z\right)=\delta_{xz},\;\forall x,y,z\in V'.\label{eq:F11}
\end{equation}

\end{cor}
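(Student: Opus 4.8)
The plan is to unwind the notation and reduce everything to the defining equation for the dipoles together with the inner-product formula recorded in Remark \ref{rem:hsp}. First I would read $\left\langle v_{x},v_{y}\right\rangle _{\mathscr{H}_{E}}$ as a function of the variable $y$, with $x$ held fixed. By the identity $\left\langle v_{x},v_{y}\right\rangle _{\mathscr{H}_{E}}=v_{x}(y)=v_{y}(x)$ from Remark \ref{rem:hsp} (valid under the normalization $v_{x}(o)=0$), this function is nothing other than the dipole $v_{x}$ itself, regarded as a function on $V$. Thus the symbol $\Delta_{y}\big(\left\langle v_{x},v_{y}\right\rangle\big)$ denotes the pointwise action of the graph Laplacian $\Delta$, in the $y$-variable, on the function $y\mapsto v_{x}(y)$.

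Next I would invoke the defining equation (\ref{eq:fsoln1}), namely $\Delta v_{x}=\delta_{x}-\delta_{o}$ for $x\in V'$. Applying $\Delta$ to the function $v_{x}(\cdot)$ and then evaluating at the point $z$ gives
\[
\big(\Delta_{y}\left\langle v_{x},v_{y}\right\rangle\big)(z)=(\Delta v_{x})(z)=\delta_{x}(z)-\delta_{o}(z)=\delta_{xz}-\delta_{oz}.
\]
Since the asserted identity is stated for $z\in V'=V\setminus\{o\}$, we have $z\neq o$, so $\delta_{oz}=0$ and the right-hand side collapses to $\delta_{xz}$, which is exactly (\ref{eq:F11}).

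The computation is short, so the only real care needed is interpretive rather than analytic. The one point I would make explicit is that $\Delta_{y}$ acts \emph{pointwise} on functions on $V$; this is legitimate because $(\Delta|_{\mathscr{D}_{E}})^{*}$, and hence $\Delta$ on the span of dipoles, acts by the pointwise formula (\ref{eq:F1}), as established in Lemma \ref{lem:lapd}. I would also remark that the symmetry $\left\langle v_{x},v_{y}\right\rangle =\left\langle v_{y},v_{x}\right\rangle$ means one could equally take $x$ as the active variable, obtaining $\delta_{yz}$; the stated form simply singles out $y$. I do not anticipate a genuine obstacle here: the corollary is an immediate transcription of (\ref{eq:fsoln1}) once the inner-product kernel is recognized as the dipole, so I would keep the argument to these few lines.
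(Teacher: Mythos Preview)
Your proof is correct and follows essentially the same approach as the paper: both identify $\langle v_{x},v_{y}\rangle_{\mathscr{H}_{E}}=v_{x}(y)$, apply $\Delta v_{x}=\delta_{x}-\delta_{o}$, and use $z\in V'$ to drop the $\delta_{o}$ term. The paper merely orders the steps slightly differently, expanding the Laplacian sum explicitly before substituting the inner-product identity, but the logical content is identical.
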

\uline{Notation:} The inner products $\left\langle v_{x},v_{y}\right\rangle :=\left\langle v_{x},v_{y}\right\rangle _{\mathscr{H}_{E}}$
constitute the Gramian of the frame Theorem \ref{thm:eframe}.
\begin{proof}
~
\begin{eqnarray*}
\left(\mbox{LHS}\right)_{\left(\ref{eq:F11}\right)} & = & \sum_{w\sim z}c_{zw}\left(\left\langle v_{x},v_{z}\right\rangle _{\mathscr{H}_{E}}-\left\langle v_{x},v_{w}\right\rangle _{\mathscr{H}_{E}}\right)\\
 & = & \sum_{w\sim z}c_{zw}\left(v_{x}\left(z\right)-v_{x}\left(w\right)\right)\\
 & = & \left(\Delta v_{x}\right)\left(z\right)\\
 & = & \left(\delta_{x}-\delta_{o}\right)\left(z\right)=\delta_{xz}=\left(\mbox{RHS}\right)_{\left(\ref{eq:F11}\right)}
\end{eqnarray*}
where we used that $z\in V'=V\backslash\left\{ o\right\} $ in the
last step of the verification.
\end{proof}

\subsection{The operator $P$ versus $\Delta$}

Let $\left(V,E,c\right)$ be a network with vertices $V$, edges $E$,
and conductance function $c:E\rightarrow\mathbb{R}_{+}\cup\left\{ 0\right\} $,
setting 
\begin{equation}
\begin{split}\widetilde{c}\left(x\right):= & \sum_{y\sim x}c_{xy},\; p_{xy}:=\frac{c_{xy}}{\widetilde{c}\left(x\right)};\mbox{ and}\\
\left(\Delta u\right)\left(x\right) & =\sum_{y\sim x}c_{xy}\left(u\left(x\right)-u\left(y\right)\right),\\
\left(Pu\right)\left(x\right) & =\sum_{y\sim x}p_{xy}u\left(y\right),
\end{split}
\label{eq:P1}
\end{equation}
we have the connection
\begin{align}
\Delta & =\widetilde{c}\left(I-P\right),\mbox{ and}\label{eq:P2}\\
P & =I-\frac{1}{\widetilde{c}}\Delta\label{eq:P3}
\end{align}
from (\ref{eq:F3-2}) in Corollary \ref{cor:P}. 
\begin{thm}
\label{thm:P}Let $\mathscr{H}_{E}$ (depending on $c$) be the energy
Hilbert space, and set $l^{2}\left(\widetilde{c}\right)=l^{2}\left(V,\widetilde{c}\right)=$
all functions on $V$ with inner product
\begin{equation}
\left\langle u_{1},u_{2}\right\rangle _{l^{2}\left(\widetilde{c}\right)}=\sum_{x\in V}\widetilde{c}\left(x\right)\overline{u_{1}\left(x\right)}u_{2}\left(x\right).\label{eq:P4}
\end{equation}
Then
\begin{enumerate}
\item \label{enu:p1}$\Delta$ is Hermitian in $\mathscr{H}_{E}$, but \uline{not}
in $l^{2}\left(\widetilde{c}\right)$.
\item \label{enu:p2}$P$ is Hermitian in $l^{2}\left(\widetilde{c}\right)$
and in $\mathscr{H}_{E}$.
\end{enumerate}
\end{thm}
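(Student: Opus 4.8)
The plan is to read off the first assertion of (\ref{enu:p1}) from earlier work and to reduce everything else to the reversibility (detailed-balance) identity $\widetilde{c}(x)p_{xy}=c_{xy}=c_{yx}=\widetilde{c}(y)p_{yx}$ together with the intertwining of the two inner products supplied by Lemma \ref{lem:deltaD}. That $\Delta$ is Hermitian in $\mathscr{H}_{E}$ is already contained in Lemma \ref{lem:Delta}\,(\ref{enu:D2}), so I would only cite it; the remaining content is the behaviour of $P$ in both spaces and the failure of $\Delta$ to be symmetric in $l^{2}(\widetilde{c})$.

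For claim (\ref{enu:p2}) in $l^{2}(\widetilde{c})$ I would compute on finitely supported functions. Inserting $p_{xy}=c_{xy}/\widetilde{c}(x)$ into $\langle Pu,v\rangle_{l^{2}(\widetilde{c})}=\sum_{x}\widetilde{c}(x)\overline{(Pu)(x)}\,v(x)$ cancels the weight $\widetilde{c}(x)$ and leaves the double sum $\sum_{x\sim y}c_{xy}\overline{u(y)}\,v(x)$; relabelling $x\leftrightarrow y$ and using $c_{xy}=c_{yx}$ converts this into $\langle u,Pv\rangle_{l^{2}(\widetilde{c})}$, which is exactly reversibility of the walk of \secref{setting}. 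For the non-Hermiticity of $\Delta$ in $l^{2}(\widetilde{c})$ I would give a counterexample rather than argue abstractly: since $(\Delta\delta_{a})(a)=\widetilde{c}(a)$ and $(\Delta\delta_{a})(b)=-c_{ab}$ for a neighbour $b$ of $a$, one finds for an edge $(a,b)\in E$ that $\langle\Delta\delta_{a},\delta_{b}\rangle_{l^{2}(\widetilde{c})}=-\widetilde{c}(b)\,c_{ab}$ whereas $\langle\delta_{a},\Delta\delta_{b}\rangle_{l^{2}(\widetilde{c})}=-\widetilde{c}(a)\,c_{ab}$. These disagree whenever two adjacent vertices have distinct weighted degrees $\widetilde{c}(a)\neq\widetilde{c}(b)$, the generic situation. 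Conceptually, writing $\Delta=M_{\widetilde{c}}(I-P)$ with $M_{\widetilde{c}}$ the (self-adjoint) multiplication by $\widetilde{c}$ and taking the $l^{2}(\widetilde{c})$-adjoint gives $\Delta^{*}=(I-P)M_{\widetilde{c}}$, so $\Delta$ is symmetric there precisely when $M_{\widetilde{c}}$ commutes with $P$, i.e.\ (by connectedness) only when $\widetilde{c}$ is constant.

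The conceptual heart, and the step I expect to be the main obstacle, is that $P$ is Hermitian in $\mathscr{H}_{E}$. Here I would avoid the energy form and instead use the intertwining identity $\langle f,g\rangle_{\mathscr{H}_{E}}=\langle f,(I-P)g\rangle_{l^{2}(\widetilde{c})}$, valid for finitely supported $f$ and all $g\in\mathscr{H}_{E}$; it follows from $\langle\delta_{x},g\rangle_{\mathscr{H}_{E}}=(\Delta g)(x)$ (Lemma \ref{lem:deltaD}) and $\Delta=\widetilde{c}(I-P)$. Taking $u,v$ finitely supported (so $Pu,Pv$ are again finitely supported) yields $\langle Pu,v\rangle_{\mathscr{H}_{E}}=\langle Pu,(I-P)v\rangle_{l^{2}(\widetilde{c})}=\langle u,P(I-P)v\rangle_{l^{2}(\widetilde{c})}$, using the $l^{2}(\widetilde{c})$-self-adjointness of $P$ just established, while $\langle u,Pv\rangle_{\mathscr{H}_{E}}=\langle u,(I-P)Pv\rangle_{l^{2}(\widetilde{c})}$; the two coincide because $P(I-P)=(I-P)P$. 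The only real difficulty is a domain/density matter: finitely supported functions need not be dense in $\mathscr{H}_{E}$ when nonconstant harmonic functions exist (cf. (\ref{eq:N3-3})). To obtain symmetry on the genuinely dense domain $\mathscr{D}_{E}$ I would therefore also record the direct check: substituting $Pv_{x}=v_{x}-\tfrac{1}{\widetilde{c}}(\delta_{x}-\delta_{o})$ and reading off the inner-product table of Remark \ref{rem:hsp} gives $\langle Pv_{x},v_{y}\rangle_{\mathscr{H}_{E}}=\langle v_{x},v_{y}\rangle_{\mathscr{H}_{E}}-\widetilde{c}(x)^{-1}\delta_{xy}-\widetilde{c}(o)^{-1}$, which is manifestly symmetric in $x,y$; since the $v_{x}$ are real this forces $\langle Pv_{x},v_{y}\rangle_{\mathscr{H}_{E}}=\langle v_{x},Pv_{y}\rangle_{\mathscr{H}_{E}}$, completing (\ref{enu:p2}).
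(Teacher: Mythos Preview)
Your proposal is correct and, at its core, matches the paper's argument: you cite Lemma~\ref{lem:Delta}(\ref{enu:D2}) for $\Delta$ in $\mathscr{H}_{E}$, you use reversibility $\widetilde{c}(x)p_{xy}=\widetilde{c}(y)p_{yx}$ for $P$ in $l^{2}(\widetilde{c})$, and your dipole computation (your approach~(b)) is exactly the paper's Lemma following the theorem, which computes $\langle (f\Delta)v_{x},v_{y}\rangle_{\mathscr{H}_{E}}=f(y)\delta_{xy}+f(o)$ for $f=1/\widetilde{c}$ and observes the symmetry in $x,y$.

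Two minor differences are worth noting. First, for the non-Hermiticity of $\Delta$ in $l^{2}(\widetilde{c})$ you give an explicit counterexample on $\delta_{a},\delta_{b}$ and the clean commutator criterion $\Delta^{*}=(I-P)M_{\widetilde{c}}$ forcing $[M_{\widetilde{c}},P]=0$; the paper states only the latter commutator observation without the concrete test vectors, so your version is more informative. Second, your intertwining route~(a), passing through $\langle f,g\rangle_{\mathscr{H}_{E}}=\langle f,(I-P)g\rangle_{l^{2}(\widetilde{c})}$ and the commutation $P(I-P)=(I-P)P$, is a genuinely different (and elegant) argument that the paper does not use; you correctly flag its density limitation in the presence of nonconstant $\mathscr{H}_{E}$-harmonic functions and fall back to the dipole check, which is why the paper works directly on $\mathscr{D}_{E}$ from the start.
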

\begin{proof}
The first half of conclusion (\ref{enu:p1}) is contained in Lemma
\ref{lem:Delta}. To show that $P$ is also Hermitian in $\mathscr{H}_{E}$,
use (\ref{eq:P3}) and the following lemma applied to $f=\frac{1}{\widetilde{c}}$. \end{proof}
\begin{lem}
Let $\Delta$, $P$, and $\mathscr{H}_{E}$ be as above, and let $f$
be a function on $V$, then 
\begin{equation}
\left\langle \left(f\Delta\right)v_{x},v_{y}\right\rangle _{\mathscr{H}_{E}}=\left\langle v_{x},\left(f\Delta\right)v_{y}\right\rangle _{\mathscr{H}_{E}}.\label{eq:P5}
\end{equation}
\end{lem}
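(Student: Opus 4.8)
The plan is to reduce everything to the pointwise identity $\Delta v_x=\delta_x-\delta_o$ (eq.~(\ref{eq:fsoln1}), see also (\ref{eq:di4})) together with the pairing formula $\langle\delta_z,g\rangle_{\mathscr{H}_E}=(\Delta g)(z)$ from Lemma~\ref{lem:deltaD}. First I would observe that, since $\Delta v_x$ is supported on the two-point set $\{x,o\}$, pointwise multiplication by $f$ produces the explicit vector
\begin{equation}
(f\Delta)v_x=f\cdot(\delta_x-\delta_o)=f(x)\,\delta_x-f(o)\,\delta_o,\qquad x\in V',
\end{equation}
and likewise for $v_y$. This turns both sides of (\ref{eq:P5}) into finite combinations of inner products of the form $\langle\delta_z,v_w\rangle_{\mathscr{H}_E}$, which is exactly what Lemma~\ref{lem:deltaD} (equivalently the table in Remark~\ref{rem:hsp}) evaluates.

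Next I would compute the left-hand side. Using Lemma~\ref{lem:deltaD} in the form $\langle\delta_z,v_y\rangle_{\mathscr{H}_E}=(\Delta v_y)(z)=(\delta_y-\delta_o)(z)=\delta_{zy}-\delta_{zo}$, and recalling that $x,y\in V'$ so that the base-point terms $\delta_{xo}$ and $\delta_{oy}$ vanish, I obtain
\begin{equation}
\langle(f\Delta)v_x,v_y\rangle_{\mathscr{H}_E}=f(x)\langle\delta_x,v_y\rangle_{\mathscr{H}_E}-f(o)\langle\delta_o,v_y\rangle_{\mathscr{H}_E}=f(x)\,\delta_{xy}+f(o),
\end{equation}
where the constant term arises from $\langle\delta_o,v_y\rangle_{\mathscr{H}_E}=(\delta_y-\delta_o)(o)=-1$. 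The right-hand side is handled by the identical computation with the roles of $x$ and $y$ interchanged, giving $\langle v_x,(f\Delta)v_y\rangle_{\mathscr{H}_E}=f(y)\,\delta_{xy}+f(o)$. Since $\delta_{xy}$ is nonzero only when $x=y$, in which case $f(x)=f(y)$, the two expressions agree, which is precisely (\ref{eq:P5}).

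The argument is essentially bookkeeping, so I do not anticipate a genuine obstacle; the only points requiring care are the correct treatment of the fixed base point $o$ — namely the normalization $v_x(o)=0$ and the fact that the $v_x$ are indexed only by $x\in V'$ — which is what produces the common $+f(o)$ term on both sides, and the tacit assumption that $f$ is real-valued (as it is in the intended application $f=1/\widetilde{c}$ used for Theorem~\ref{thm:P}(\ref{enu:p2})), so that the scalars $f(x),f(o)$ pass through the conjugate-linear slot of $\langle\cdot,\cdot\rangle_{\mathscr{H}_E}$ unchanged.
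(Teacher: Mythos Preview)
Your argument is correct and follows essentially the same route as the paper: both compute $(f\Delta)v_x=f(x)\delta_x-f(o)\delta_o$ and pair with $v_y$ to obtain $f(x)\delta_{xy}+f(o)$ (the paper writes this as $f(y)\delta_{xy}+f(o)$, which is of course the same number), then conclude by symmetry. The only cosmetic difference is that the paper evaluates $\langle f\cdot(\delta_x-\delta_o),v_y\rangle_{\mathscr{H}_E}$ directly via the dipole reproducing property $\langle u,v_y\rangle_{\mathscr{H}_E}=u(y)-u(o)$, whereas you first split into $\delta$-terms and invoke Lemma~\ref{lem:deltaD}; your explicit remark that the computation tacitly uses real-valuedness of $f$ is a point the paper leaves implicit.
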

\begin{proof}
We compute as follows, using (\ref{eq:di4}):
\begin{eqnarray*}
\mbox{LHS}_{\left(\ref{eq:P5}\right)} & = & \left\langle f\left(\cdot\right)\left(\delta_{x}-\delta_{o}\right),v_{y}\right\rangle _{\mathscr{H}_{E}}\\
 & = & \left(f\left(\cdot\right)\left(\delta_{x}-\delta_{o}\right)\right)\left(y\right)-\left(f\left(\cdot\right)\left(\delta_{x}-\delta_{o}\right)\right)\left(o\right)\\
 & = & f\left(y\right)\delta_{xy}+f\left(o\right)
\end{eqnarray*}
for all vertices $x,y\in V'=V\backslash\left\{ o\right\} $ where
$o$ is a fixed choice of base-point in the vertex set $V$. The desired
conclusion (\ref{eq:P5}) follows.
\end{proof}

\begin{proof}[Proof of Theorem \ref{thm:P}, part (\ref{enu:p2}) ]
We show that 
\[
\left\langle \left(Pu_{1}\right),u_{2}\right\rangle _{l^{2}\left(\widetilde{c}\right)}=\left\langle u_{1},\left(Pu_{2}\right)\right\rangle _{l^{2}\left(\widetilde{c}\right)}
\]
for all finitely supported functions $u_{1},u_{2}$ on $V$. 

But this follows from the assumption $\widetilde{c}\left(x\right)p_{xy}=\widetilde{c}\left(y\right)p_{yx}$
(reversible). Since 
\[
\sum_{x}\widetilde{c}\left(x\right)p_{xy}=\sum_{x}\widetilde{c}\left(y\right)p_{yx}=\widetilde{c}\left(y\right)\sum_{x}p_{yx}=\widetilde{c}\left(y\right)
\]
i.e., the function $\widetilde{c}$ is left-invariant for $P$, viz:
$\widetilde{c}P=\widetilde{c}$ viewing $P=\left(p_{xy}\right)$ as
a Markov matrix.

The final assertion, that $\Delta$ is not Hermitian in $l^{2}\left(\widetilde{c}\right)$
follows from Lemma \ref{lem:Delta}, and the fact that $\Delta$ does
not commute with the multiplication operator $f=\frac{1}{\widetilde{c}}$. \end{proof}
\begin{cor}
Let $\left(Pu\right)\left(x\right)=\sum_{y\sim x}p_{xy}u\left(y\right)$
be the transition operator, where 
\begin{equation}
p_{xy}=\frac{c_{xy}}{\widetilde{c}\left(x\right)}\label{eq:P3-1}
\end{equation}
is defined for $\left(xy\right)\in E$, $c$ is a fixed conductance
on $\left(V,E\right)$, and 
\begin{equation}
\widetilde{c}\left(x\right)=\sum_{y\sim x}c_{xy}.\label{eq:P3-2}
\end{equation}
Then $P$ is selfadjoint and contractive in $l^{2}\left(V,\widetilde{c}\right)$.\end{cor}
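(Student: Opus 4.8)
The plan is to obtain selfadjointness as a consequence of contractivity. Recall the elementary fact that a symmetric operator which is bounded and everywhere-defined on a Hilbert space is automatically selfadjoint. The symmetry of $P$ in $l^{2}\left(\widetilde{c}\right)$ was already established in Theorem \ref{thm:P} (\ref{enu:p2}); hence the real content of the assertion is the norm bound $\left\Vert P\right\Vert \leq 1$, and once this is in hand selfadjointness is immediate.

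To prove the contraction estimate I would first work on the dense subspace of finitely supported functions $u$ on $V$, and combine two ingredients: the fact that $\sum_{y\sim x}p_{xy}=1$ (Remark \ref{rem:rw}), which lets me view each value $\left(Pu\right)\left(x\right)$ as a convex average of the numbers $u\left(y\right)$, and the reversibility identity $\widetilde{c}\left(x\right)p_{xy}=\widetilde{c}\left(y\right)p_{yx}$. Explicitly, by Jensen's inequality (equivalently Cauchy--Schwarz against the probability weights $p_{xy}$),
\begin{equation*}
\left|\left(Pu\right)\left(x\right)\right|^{2}=\Big|\sum_{y\sim x}p_{xy}u\left(y\right)\Big|^{2}\leq\sum_{y\sim x}p_{xy}\left|u\left(y\right)\right|^{2}.
\end{equation*}
Multiplying by $\widetilde{c}\left(x\right)$, summing over $x$, then interchanging the order of summation (legitimate since all terms are nonnegative) and applying reversibility yields
\begin{align*}
\left\Vert Pu\right\Vert _{l^{2}\left(\widetilde{c}\right)}^{2} & \leq\sum_{x}\sum_{y}\widetilde{c}\left(x\right)p_{xy}\left|u\left(y\right)\right|^{2}=\sum_{y}\Big(\sum_{x}\widetilde{c}\left(y\right)p_{yx}\Big)\left|u\left(y\right)\right|^{2}\\
 & =\sum_{y}\widetilde{c}\left(y\right)\left|u\left(y\right)\right|^{2}=\left\Vert u\right\Vert _{l^{2}\left(\widetilde{c}\right)}^{2},
\end{align*}
where in the last line I used $\sum_{x}p_{yx}=1$. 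This is precisely contractivity on the dense domain.

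Finally I would assemble the pieces. Since $P$ is defined and contractive on the dense subspace of finitely supported functions, it extends uniquely by continuity to a bounded operator on all of $l^{2}\left(\widetilde{c}\right)$ with $\left\Vert P\right\Vert \leq1$; the symmetry relation from Theorem \ref{thm:P} (\ref{enu:p2}), being a continuous identity, passes to the closure and therefore holds on the whole space. A bounded, everywhere-defined symmetric operator is selfadjoint, which completes the proof. I expect the only genuinely delicate point to be precisely this distinction between \emph{symmetric} and \emph{selfadjoint}: symmetry alone would not suffice, but it is rescued here by the boundedness coming from the contraction estimate, so no domain subtleties intervene. The computational heart is the reversibility bookkeeping displayed above.
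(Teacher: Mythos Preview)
Your argument is correct, and it takes a genuinely different route from the paper's. The paper does not use Jensen's inequality at all; instead it exploits the identity $\widetilde{c}Pu=\widetilde{c}u-\Delta u$ (equivalently $P=I-\tfrac{1}{\widetilde{c}}\Delta$) together with the positivity $\langle u,\Delta u\rangle_{l^{2}}\geq 0$ from Lemma~\ref{lem:Delta}(\ref{enu:D3}) to obtain
\[
\langle u,Pu\rangle_{l^{2}(\widetilde{c})}=\Vert u\Vert_{l^{2}(\widetilde{c})}^{2}-\langle u,\Delta u\rangle_{l^{2}}\leq\Vert u\Vert_{l^{2}(\widetilde{c})}^{2},
\]
and then invokes the Hermitian property from Theorem~\ref{thm:P}(\ref{enu:p2}) to conclude. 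What your approach buys is a direct norm estimate $\Vert Pu\Vert_{l^{2}(\widetilde{c})}\leq\Vert u\Vert_{l^{2}(\widetilde{c})}$ rather than merely the quadratic-form bound $\langle u,Pu\rangle\leq\Vert u\Vert^{2}$; strictly speaking the latter only gives $P\leq I$, and one would still need $P\geq -I$ to deduce $\Vert P\Vert\leq 1$ from symmetry, a point the paper glosses over. Your Jensen-plus-reversibility computation sidesteps this entirely and is the standard argument that a reversible Markov operator contracts $L^{2}$ of its stationary measure. The paper's route, on the other hand, makes visible the link between contractivity of $P$ and semiboundedness of $\Delta$, which fits its broader theme.
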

\begin{proof}
We proved in Lemma \ref{lem:Delta} (\ref{enu:D3}) that $\left\langle u,\Delta u\right\rangle _{l^{2}}\geq0$
holds for all $u\in l^{2}$ where $\left\langle \cdot,\cdot\right\rangle _{l^{2}}$
referes to the un-weighted $l^{2}$-inner product. The connection
between the two inner products is as follows: $\left\langle u,\widetilde{c}u\right\rangle _{l^{2}}=\left\Vert u\right\Vert _{l^{2}\left(\widetilde{c}\right)}^{2}$
which yields the following:

Using (\ref{eq:P3-1}) and (\ref{eq:P3-2}), we get 
\begin{equation}
Pu=u-\frac{1}{\widetilde{c}}\Delta u,\label{eq:P3-3}
\end{equation}
so $\widetilde{c}Pu=\widetilde{c}u-\Delta u$, and as a point-wise
identity on $V$. Hence 
\begin{align*}
\left\langle u,Pu\right\rangle _{l^{2}\left(\widetilde{c}\right)} & =\left\langle u,\widetilde{c}u-\Delta u\right\rangle _{l^{2}}\\
 & =\left\Vert u\right\Vert _{l^{2}\left(\widetilde{c}\right)}^{2}-\left\langle u,\Delta u\right\rangle _{l^{2}}\\
 & \leq\left\Vert u\right\Vert _{l^{2}\left(\widetilde{c}\right)}^{2},\;\left(\mbox{by Lemma \ref{lem:Delta} \ensuremath{\left(\ref{enu:D3}\right)}}\right)
\end{align*}
holds for all $u\in l^{2}\left(\widetilde{c}\right)$.

Since we also proved that $P$ (see eq. (\ref{eq:P3-3})) is $l^{2}\left(\widetilde{c}\right)$-Hermitian,
we conclude that it is contractive and selfadjoint in the Hilbert
space $l^{2}\left(\widetilde{c}\right)$, as claimed.\end{proof}
\begin{lem}
\label{lem:hf}Let $\left(V,E,c,p\right)$, $\Delta$, and $P$ be
as above. Then a function $u$ on $V$ satisfies $\Delta u=0$ if
and only if $Pu=u$. \end{lem}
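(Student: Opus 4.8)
The plan is to read the equivalence directly off the factorization $\Delta=\widetilde{c}\left(I-P\right)$ recorded in (\ref{eq:P2}). The first step is to emphasize that this is a \emph{pointwise} identity of functions on $V$: combining the definitions of $\Delta$, of $p_{xy}=c_{xy}/\widetilde{c}\left(x\right)$, and of $P$ from (\ref{eq:P1}), one has, for every $x\in V$,
\[
\left(\Delta u\right)\left(x\right)=\widetilde{c}\left(x\right)\big(u\left(x\right)-\left(Pu\right)\left(x\right)\big).
\]
This is nothing more than a rewriting of (\ref{eq:P2})–(\ref{eq:P3}), so no new computation is really needed beyond exhibiting this line.

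Next I would invoke the standing hypotheses on $\left(V,E,c\right)$: every vertex has at least one neighbor (condition (2) of the basic setting) and $c_{xy}>0$ on each edge, so the weight $\widetilde{c}\left(x\right)=\sum_{y\sim x}c_{xy}>0$ for \emph{every} $x\in V$. Consequently the scalar factor $\widetilde{c}\left(x\right)$ in the displayed identity is nowhere zero, and we may cancel it vertex by vertex. The equivalence then reads off immediately: $\Delta u=0$ means $\left(\Delta u\right)\left(x\right)=0$ for all $x$, which by the displayed identity and $\widetilde{c}\left(x\right)>0$ is the same as $u\left(x\right)-\left(Pu\right)\left(x\right)=0$ for all $x$, i.e. $Pu=u$; and conversely $Pu=u$ forces $\Delta u=\widetilde{c}\left(u-Pu\right)=0$. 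No analytic input (no completeness, no density, no membership in $\mathscr{H}_{E}$ or $l^{2}$) enters, since the statement is about arbitrary functions $u$ on $V$ and the equality is purely algebraic and local.

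The only point that requires any care — and it is genuinely minor — is the nonvanishing of $\widetilde{c}$, which is exactly what licenses the cancellation and hence the implication $\Delta u=0\Rightarrow Pu=u$; at a hypothetical isolated vertex with $\widetilde{c}=0$ this direction could fail. Because the hypotheses of Section \ref{sec:setting} guarantee $\widetilde{c}>0$ everywhere, there is no real obstacle, and I expect the proof to be a two-line consequence of (\ref{eq:P2}).
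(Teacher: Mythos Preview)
Your proposal is correct and is exactly the paper's approach: the paper's proof is the single line ``Immediate from $\Delta u=\tilde{c}\left(u-Pu\right)$ (eq.~(\ref{eq:P2})),'' which is precisely the pointwise factorization you invoke together with the implicit use of $\widetilde{c}(x)>0$. Your additional remark that the nonvanishing of $\widetilde{c}$ is what licenses the cancellation is a helpful clarification but introduces nothing new.
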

\begin{proof}
Immediate from $\Delta u=\tilde{c}\left(u-Pu\right)$ (eq. (\ref{eq:P2})).\end{proof}
\begin{cor}
Let $\left(V,E,c\right)$ be as above, and set 
\begin{equation}
p_{xy}:=\dfrac{c_{xy}}{\widetilde{c}\left(x\right)},\;\left(xy\right)\in E;\label{eq:P2-1}
\end{equation}
where $\widetilde{c}\left(x\right):=\sum_{y\sim x}c_{xy}$. Fix $o\in V$,
and consider the dipoles $\left(v_{x}\right)_{x\in V'}$, $V'=V\backslash\left\{ o\right\} $,
where 
\begin{equation}
\left\langle v_{x},u\right\rangle _{\mathscr{H}_{E}}=u\left(x\right)-u\left(o\right),\;\forall x\in V'.\label{eq:P1-1}
\end{equation}
Setting 
\begin{equation}
\left(Pu\right)\left(x\right)=\sum_{y\sim x}p_{xy}u\left(y\right),\label{eq:P2-2}
\end{equation}
we get 
\begin{equation}
Pv_{x}=\sum_{y\sim x}p_{xy}v_{y};\:\mbox{ and}\label{eq:P1-2}
\end{equation}
the following implication holds for the dense subspace $\mathscr{D}_{E}$
in $\mathscr{H}_{E}$:
\begin{equation}
u\in\mathscr{D}_{E}\Longrightarrow Pu\in\mathscr{D}_{E};\;\mbox{where}\label{eq:P1-4}
\end{equation}
\begin{equation}
\mathscr{D}_{E}:=\left\{ \sum_{x}\xi_{x}v_{x}\:\big|\:\xi_{x}\in\mathbb{C},\;\mbox{finitely supported on \ensuremath{V',}s.t. }\sum_{x}\xi_{x}=0\right\} \label{eq:P1-3}
\end{equation}
\end{cor}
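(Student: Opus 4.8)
The plan is to reduce both assertions to two facts already in hand: the operator identity $P=I-\frac{1}{\widetilde{c}}\Delta$ from \eqref{eq:P3}, and the resolution of a point mass into dipoles, $\delta_{x}=c(x)v_{x}-\sum_{y\sim x}c_{xy}v_{y}$ for $x\in V'$ from \eqref{eq:F1-1}, where $c(x)=\widetilde{c}(x)=\sum_{y\sim x}c_{xy}$. Combined with $\Delta v_{x}=\delta_{x}-\delta_{o}$ from \eqref{eq:fsoln1}, these turn \eqref{eq:P1-2} into a one-line substitution, after which \eqref{eq:P1-4} follows by linearity.

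First I would establish \eqref{eq:P1-2}. Applying $P=I-\frac{1}{\widetilde{c}}\Delta$ to the dipole $v_{x}$ and inserting $\Delta v_{x}=\delta_{x}-\delta_{o}$ gives
\[
Pv_{x}=v_{x}-\tfrac{1}{\widetilde{c}}\delta_{x}+\tfrac{1}{\widetilde{c}}\delta_{o}.
\]
Dividing \eqref{eq:F1-1} by $c(x)=\widetilde{c}(x)$ and recalling $p_{xy}=c_{xy}/\widetilde{c}(x)$ yields $\frac{1}{\widetilde{c}(x)}\delta_{x}=v_{x}-\sum_{y\sim x}p_{xy}v_{y}$; since $\delta_{x}$ is supported at the single vertex $x$, the function $\frac{1}{\widetilde{c}}\delta_{x}$ coincides with $\frac{1}{\widetilde{c}(x)}\delta_{x}$. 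Substituting, the two copies of $v_{x}$ cancel and I am left with $Pv_{x}=\sum_{y\sim x}p_{xy}v_{y}+\frac{1}{\widetilde{c}(o)}\delta_{o}$, i.e.\ \eqref{eq:P1-2} modulo the single base-point contribution $\frac{1}{\widetilde{c}(o)}\delta_{o}$.

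Second, for the implication \eqref{eq:P1-4}, take $u=\sum_{x}\xi_{x}v_{x}$ with $\xi$ finitely supported on $V'$ and $\sum_{x}\xi_{x}=0$. By linearity and the previous step,
\[
Pu=\sum_{x}\xi_{x}\,Pv_{x}=\sum_{x}\xi_{x}\sum_{y\sim x}p_{xy}v_{y}+\Big(\sum_{x}\xi_{x}\Big)\tfrac{1}{\widetilde{c}(o)}\delta_{o}=\sum_{x}\xi_{x}\sum_{y\sim x}p_{xy}v_{y},
\]
the base-point term being annihilated exactly by the hypothesis $\sum_{x}\xi_{x}=0$. The right-hand side is a finite linear combination of dipoles $\{v_{y}:y\in V'\}$ (a neighbor $y=o$, if present, contributes $v_{o}=0$), so $Pu\in\operatorname{span}\{v_{y}:y\in V'\}=\mathscr{D}_{E}$ in the sense of Definition~\ref{def:D}. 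This proves \eqref{eq:P1-4}.

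The one point that needs care — and the reason the sum-zero hypothesis enters — is the bookkeeping of the base point $o$: the pointwise transition operator does not send $v_{x}$ precisely to $\sum_{y\sim x}p_{xy}v_{y}$ but adds a multiple of $\delta_{o}$, and only on $\mathscr{D}_{E}$ do these corrections telescope away. I expect this to be the whole difficulty; the remaining manipulations are purely algebraic. I would also note that every sum above is finite because each neighbor set $E(x)$ is finite by the standing hypotheses on $(V,E)$, so no convergence issue arises and $P$ genuinely carries $\mathscr{D}_{E}$ into itself.
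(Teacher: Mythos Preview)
Your route differs from the paper's. The paper asserts that \eqref{eq:P1-2} holds pointwise ``up to an additive constant'' and then checks the sum-zero condition for the new coefficients via $\sum_{y}\big(\sum_{x}\xi_{x}p_{xy}\big)=\sum_{x}\xi_{x}\big(\sum_{y}p_{xy}\big)=\sum_{x}\xi_{x}=0$. You instead compute the discrepancy in \eqref{eq:P1-2} explicitly through \eqref{eq:P3}, \eqref{eq:fsoln1}, and \eqref{eq:F1-1}, obtaining the correction $\tfrac{1}{\widetilde c(o)}\delta_{o}$. This is sharper than the paper's claim: $\delta_{o}$ is not a constant function and is not the zero vector of $\mathscr{H}_{E}$ (indeed $\|\delta_{o}\|_{\mathscr{H}_{E}}^{2}=\widetilde c(o)$ by Corollary~\ref{cor:deltain}), so \eqref{eq:P1-2} does not literally hold, and your mechanism---the $\delta_{o}$ contributions cancelling under $\sum_{x}\xi_{x}=0$---is the correct one.

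There is, however, a gap at the end. You conclude $Pu\in\operatorname{span}\{v_{y}:y\in V'\}$ and cite Definition~\ref{def:D}, but the corollary's \eqref{eq:P1-3} imposes the additional constraint $\sum_{y}\eta_{y}=0$ on the coefficients, and you do not verify it. In fact, once $v_{o}=0$ is used to drop the $y=o$ term, the remaining coefficients satisfy
\[
\sum_{y\in V'}\eta_{y}=\sum_{x\in V'}\xi_{x}\sum_{y\sim x,\,y\in V'}p_{xy}=\sum_{x}\xi_{x}\big(1-p_{xo}\mathbf{1}_{x\sim o}\big)=-\sum_{x\sim o}\xi_{x}p_{xo},
\]
which need not vanish (e.g.\ on $V=\{0,1,2,\dots\}$ with unit conductances and $o=0$, taking $u=v_{1}-v_{2}$ gives $Pu=-\tfrac12 v_{1}+\tfrac12 v_{2}-\tfrac12 v_{3}$). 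The paper's Fubini step hides the same issue because it sums over all $y$ including $y=o$. So what both arguments actually establish is invariance of the \emph{unconstrained} span of dipoles from Definition~\ref{def:D}; your derivation makes this honest content more transparent, but your final sentence overclaims by equating that span with the constrained $\mathscr{D}_{E}$ of \eqref{eq:P1-3}.
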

\begin{proof}
A direct computation shows that (\ref{eq:P1-2}) must hold as an identity
on functions on $V$, up to an additive constant. 

Our assertion is that working in the Hilbert space $\mathscr{H}_{E}$
implies that the additive constant is zero. This amounts to verification
of the implication (\ref{eq:P1-4}); i.e., that 
\begin{equation}
\sum_{x}\xi_{x}=0\Longrightarrow\sum_{y}\left(\sum_{x}\xi_{x}p_{xy}\right)=0\label{eq:P1-5}
\end{equation}
for all finitely supported functions. But we have 
\[
\sum_{y}\left(\sum_{x}\xi_{x}p_{xy}\right)=\sum_{x}\xi_{x}\left(\sum_{y}p_{xy}\right)=\sum_{x}\xi_{x}=0
\]
which is the desired assertion (\ref{eq:P1-5}). Hence (\ref{eq:P1-4})
follows.
\end{proof}

\section{\label{sec:eg}Examples}

The purpose of the first example is multi-fold:

First, by picking an infinite arithmetic progression of points on
the line as vertex set $V$, and nearest neighbors, an assignment
of conductance simply amounts to a function on the edges $(n,n+1)$,
and we get non-trivial models where explicit formulas are possible
and transparent. For example, we can write down the dipoles $v_{xy}$
as functions on $V$, and the corresponding resistance metric; see
the formulas relating to Figure \ref{fig:1dipole}. Among all the
conductance functions we characterize the cases of reversible Markov
models where the left/right transition probabilities are the same
for all vertex points. In section \ref{ex:btree} (the binomial model)
we accomplish the same characterization of the cases of reversible
Markov models where the left/right transition probabilities are the
same for all vertex points, but now every vertex in the binary tree
has three nearest neighbors.

In section \ref{ex:tri} we give a finite graph $(V,E,c)$ as a triangular
configuration, conductance $c$ defined on the edges of the triangle,
and we find the Parseval frame (thereby illustrating Theorem \ref{thm:eframe}).

The examples below illustrate the following: When a graph network
$(V,E,c)$ is infinite, then the dipoles $v_{xy}$ as functions on
$V$ will not lie in the Hilbert space $l^{2}(V)$. Hence another
justification for the energy Hilbert space $\mathscr{H}_{E}$.

\subsection{\label{ex:1d}$V=\left\{ 0\right\} \cup\mathbb{Z}_{+}$}

Consider $G=\left(V,E,c\right)$, where $V=\left\{ 0\right\} \cup\mathbb{Z}_{+}$.
Observation: Every sequence $a_{1},a_{2},\ldots$ in $\mathbb{R}_{+}$
defines a conductance $c_{n-1,n}:=a_{n}$, $n\in\mathbb{Z}_{+}$,
i.e., 
\[
\xymatrix{0\ar@{<->}[r]_{a_{1}} & 1\ar@{<->}[r]_{a_{2}} & 2\ar@{<->}[r]_{a_{3}} & 3 & \cdots & n\ar@{<->}[r]_{a_{n+1}} & n+1}
\cdots
\]

The dipole vectors $v_{xy}$ (for $x,y\in\mathbb{N}$) are given by
\[
v_{xy}\left(z\right)=\begin{cases}
0 & \mbox{if \ensuremath{z\leq x}}\\
-\sum_{k=x+1}^{z}\frac{1}{a_{k}} & \mbox{if \ensuremath{x<z<y}}\\
-\sum_{k=x+1}^{y}\frac{1}{a_{k}} & \mbox{if \ensuremath{z\geq y}}
\end{cases}
\]
See Fig \ref{fig:1dipole}. 

\begin{figure}[H]
\includegraphics[scale=0.5]{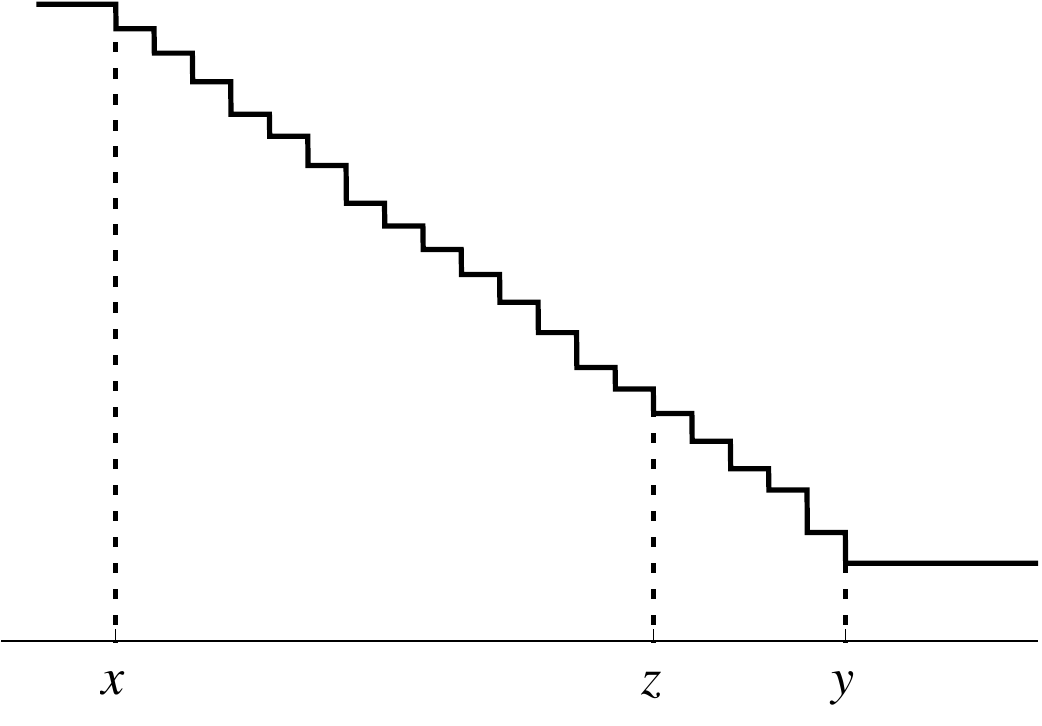}

\protect\caption{\label{fig:1dipole}The dipole $v_{xy}$.}
\end{figure}

It follows from Lemma \ref{lem:metric} that the resistance metric
$dist\left(=d_{c}=d_{a}\right)$ is as follows
\[
dist\left(x,y\right)=\begin{cases}
0 & \mbox{if \ensuremath{x=y}}\\
\underset{\sum_{x<k\leq y}\frac{1}{a_{k}}}{\underbrace{\frac{1}{a_{x+1}}+\cdots+\frac{1}{a_{y}}}} & \mbox{if \ensuremath{x<y}}
\end{cases}
\]

Note that $V=\mathbb{Z}_{+}\cup\left\{ 0\right\} $ with the resistance
metric described above yields a \uline{bounded} metric space if
and only if 
\begin{equation}
\sum_{n=1}^{\infty}\frac{1}{a_{n}}<\infty.\label{eq:ex1-1}
\end{equation}

The corresponding graph Laplacian has the following matrix representation:

\begin{equation}
\begin{bmatrix}a_{1} & -a_{1}\\
-a_{1} & a_{1}+a_{2} & -a_{2}\\
 & -a_{2} & a_{2}+a_{3} & -a_{3} &  &  & \Huge\mbox{0}\\
 &  & -a_{3} & a_{3}+a_{4} & \ddots\\
 &  &  & \ddots & \ddots & -a_{n}\\
 &  &  &  & -a_{n} & a_{n}+a_{n+1} & -a_{n+1}\\
 & \Huge\mbox{0} &  &  &  & -a_{n+1} & \ddots & \ddots\\
 &  &  &  &  &  & \ddots & \ddots\\
 &  &  &  &  &  & \ddots & \ddots
\end{bmatrix}\label{eq:Lma1}
\end{equation}
That is, 
\begin{equation}
\begin{cases}
\left(\Delta u\right)_{0} & =a_{1}\left(u_{0}-u_{1}\right)\\
\left(\Delta u\right)_{n} & =a_{n}\left(u_{n}-u_{n-1}\right)+a_{n+1}\left(u_{n}-u_{n+1}\right)\\
 & =\left(a_{n}+a_{n+1}\right)u_{n}-a_{n}u_{n-1}-a_{n+1}u_{n+1},\;\forall n\in\mathbb{Z}_{+}.
\end{cases}\label{eq:Le1}
\end{equation}

\begin{lem}
Let $G=\left(V,c,E\right)$ be as above, where $a_{n}:=c_{n-1,n}$,
$n\in\mathbb{Z}_{+}$. Then $u\in\mathscr{H}_{E}$ is the solution
to $\Delta u=-u$ (i.e., $u$ is a defect vector of $\Delta$) if
and only if $u$ satisfies the following equation:
\begin{equation}
\sum_{n=1}^{\infty}a_{n}\left\langle v_{n-1,n},u\right\rangle _{\mathscr{H}_{E}}\left(\delta_{n-1}\left(s\right)-\delta_{n}\left(s\right)+v_{n-1,n}\left(s\right)\right)=0,\;\forall s\in\mathbb{Z}_{+};\label{eq:1ddef}
\end{equation}
where 
\begin{equation}
\left\Vert u\right\Vert _{\mathscr{H}_{E}}^{2}=\sum_{n=1}^{\infty}a_{n}\left|\left\langle v_{n-1,n},u\right\rangle _{\mathscr{H}_{E}}\right|^{2}<\infty.\label{eq:1ddef-1}
\end{equation}
\end{lem}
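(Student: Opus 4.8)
The plan is to deduce both assertions from the Parseval frame of Theorem \ref{thm:eframe}, specialized to the one-dimensional nearest-neighbor geometry, where the oriented edge set is $E^{(ori)}=\{(n-1,n):n\in\mathbb{Z}_{+}\}$ and the frame vectors are $w_{n-1,n}=\sqrt{a_{n}}\,v_{n-1,n}$. With this identification the norm formula (\ref{eq:1ddef-1}) requires essentially no work: it is exactly the Parseval relation (\ref{eq:en1}) with $b_{1}=b_{2}=1$ applied to this frame, namely $\|u\|_{\mathscr{H}_{E}}^{2}=\sum_{n}|\langle w_{n-1,n},u\rangle_{\mathscr{H}_{E}}|^{2}=\sum_{n}a_{n}|\langle v_{n-1,n},u\rangle_{\mathscr{H}_{E}}|^{2}$, and the finiteness is simply the hypothesis $u\in\mathscr{H}_{E}$.

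For the defect equation I would start from the norm-convergent reconstruction furnished by the same frame, i.e. the formula (\ref{eq:en10}): since the current coefficient is $I(u)_{n-1,n}=a_{n}(u(n-1)-u(n))=a_{n}\langle v_{n-1,n},u\rangle_{\mathscr{H}_{E}}$ by (\ref{eq:en9}) together with the dipole identity (\ref{eq:di1}), it reads $u=\sum_{n}a_{n}\langle v_{n-1,n},u\rangle_{\mathscr{H}_{E}}\,v_{n-1,n}$ in $\mathscr{H}_{E}$. I then convert this into two pointwise statements. First, evaluating at a vertex $s$ and using the base-point normalization $v_{n-1,n}(o)=0$ (here $o=0$), the expansion telescopes to $u(s)-u(o)=\sum_{n}a_{n}\langle v_{n-1,n},u\rangle_{\mathscr{H}_{E}}\,v_{n-1,n}(s)$; this is legitimate because $f\mapsto f(s)-f(o)$ is $\mathscr{H}_{E}$-continuous by Lemma \ref{lem:dipole}. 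Second, I claim $(\Delta u)(s)=\sum_{n}a_{n}\langle v_{n-1,n},u\rangle_{\mathscr{H}_{E}}(\delta_{n-1}(s)-\delta_{n}(s))$ for every $s$.

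This second identity is the crux, and I would verify it directly rather than by pushing the unbounded $\Delta$ through the infinite sum: for each fixed $s$ only the indices $n=s$ and $n=s+1$ contribute on the right, and substituting $\langle v_{n-1,n},u\rangle_{\mathscr{H}_{E}}=u(n-1)-u(n)$ reproduces precisely the two-sided difference $(\Delta u)(s)$ recorded in (\ref{eq:Le1}). Conceptually this is the relation $\Delta v_{n-1,n}=\delta_{n-1}-\delta_{n}$ from (\ref{eq:di4}), with the term-by-term application of $\Delta$ harmless because the $\delta$-part is locally finite in $s$. Throughout, $\Delta u$ is understood as the pointwise action of $(\Delta|_{\mathscr{D}_{E}})^{*}$ on the defect vector, as established in Lemma \ref{lem:lapd}.

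Subtracting the first pointwise identity from the second, the eigenvalue equation $\Delta u=-u$ becomes $\sum_{n}a_{n}\langle v_{n-1,n},u\rangle_{\mathscr{H}_{E}}(\delta_{n-1}(s)-\delta_{n}(s))=-\sum_{n}a_{n}\langle v_{n-1,n},u\rangle_{\mathscr{H}_{E}}\,v_{n-1,n}(s)$, which on combining the two sides is exactly (\ref{eq:1ddef}); since every step is reversible (and (\ref{eq:1ddef-1}) guarantees $u\in\mathscr{H}_{E}$), this yields the asserted equivalence. The main obstacle I anticipate is not analytic but the bookkeeping of the additive constant: vectors of $\mathscr{H}_{E}$ are functions only modulo constants, so the $-u$ on the right must be read through the representative normalized by $u(o)=0$. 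This is precisely why (\ref{eq:1ddef}) is asserted for $s\in\mathbb{Z}_{+}$ and not at the base vertex $o=0$ — imposing the eigen-equation at $o$ as well would over-determine $u$ — and I would inspect the vertex $s=0$ separately to confirm that it is the normalization, rather than an extra constraint, that resides there.
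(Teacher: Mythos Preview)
Your proposal is correct and follows essentially the same route as the paper: invoke the Parseval frame $\{\sqrt{a_{n}}\,v_{n-1,n}\}$ from Theorem~\ref{thm:eframe} (the paper also notes it is in fact an ONB here), use the resulting reconstruction $u=\sum_{n}a_{n}\langle v_{n-1,n},u\rangle_{\mathscr{H}_{E}}v_{n-1,n}$, and then read off $\Delta u=-u$ as the asserted pointwise identity via $\Delta v_{n-1,n}=\delta_{n-1}-\delta_{n}$. Your write-up is more explicit than the paper's about why the pointwise identities are legitimate (continuity of point evaluations, local finiteness of the $\delta$-sum) and about the base-point normalization, but these are elaborations of the same argument rather than a different approach.
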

\begin{proof}
By Theorem \ref{thm:eframe}, the set $\left\{ \sqrt{a_{n}}v_{n-1,n}\right\} _{n=1}^{\infty}$
forms a Parseval frame in $\mathscr{H}_{E}$. In fact, the dipole
vectors are 
\begin{equation}
v_{n-1,n}\left(s\right)=\begin{cases}
0 & s\leq n-1\\
-\frac{1}{a_{n}} & s\geq n
\end{cases};n=1,2,\ldots\label{eq:2}
\end{equation}
and so $\left\{ \sqrt{a_{n}}v_{n-1,n}\right\} _{n=1}^{\infty}$ forms
an ONB in $\mathscr{H}_{E}$; and $u\in\mathscr{H}_{E}$ has the representation
\[
u=\sum_{n=1}^{\infty}a_{n}\left\langle v_{n-1,n},u\right\rangle _{\mathscr{H}_{E}}v_{n-1,n}
\]
see (\ref{eq:en4}). Therefore, $\Delta u=-u$ if and only if 
\begin{align*}
\sum_{n=1}^{\infty}a_{n}\left\langle v_{n-1,n},u\right\rangle _{\mathscr{H}_{E}}\left(\delta_{n-1}\left(s\right)-\delta_{n}\left(s\right)\right) & =-\sum_{n=1}^{\infty}a_{n}\left\langle v_{n-1,n},u\right\rangle _{\mathscr{H}_{E}}v_{n-1,n}\left(s\right)
\end{align*}
for all $s\in\mathbb{Z}_{+}$, which is the assertion.\end{proof}
\begin{conjecture}
Consider $\Delta$ as above as an operator in $\mathscr{H}_{E}$ (depending
on $c_{n,n-1}=a_{n}$). Then $\Delta$ is essentially selfadjoint
(in $\mathscr{H}_{E}$) if and only if $\sum_{n=1}^{\infty}\frac{1}{a_{n}}=\infty$.
If (\ref{eq:ex1-1}) holds, the indices are $\left(1,1\right)$. \end{conjecture}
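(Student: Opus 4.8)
The plan is to run the deficiency-index criterion of Lemma \ref{lem:lapd}: since $(\Delta,\mathscr{D}_E)$ is semibounded, it is essentially selfadjoint in $\mathscr{H}_E$ exactly when $m=\dim\{u\in\mathscr{H}_E:\Delta u=-u\}=0$, so the conjecture is the assertion $m=0\iff\sum_n 1/a_n=\infty$. Because the adjoint acts by the pointwise formula, I would first transcribe $\Delta u=-u$ into the explicit second-order recurrence coming from (\ref{eq:Le1}): the boundary vertex gives $a_1(u_0-u_1)=-u_0$, while for $n\ge1$ one has $(a_n+a_{n+1})u_n-a_nu_{n-1}-a_{n+1}u_{n+1}=-u_n$ (modulo the additive constant, which is the zero vector of $\mathscr{H}_E$).

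The decisive simplification is to pass to the current variable $J_n:=a_n(u_n-u_{n-1})$, which turns the recurrence into the first-order system
\[
J_{n+1}=J_n+u_n,\qquad u_n-u_{n-1}=\frac{J_n}{a_n},\qquad J_1=u_0 .
\]
Two structural facts drop out. First, the degree-one vertex $0$ pins $u_1$ to $u_0$, so the pointwise solution space is one-dimensional and $m\in\{0,1\}$; the entire question is whether this single solution has finite energy. Second, since in this example $\{\sqrt{a_n}\,v_{n-1,n}\}$ is an orthonormal basis (Theorem \ref{thm:eframe}, specialized), the energy is, by (\ref{eq:Enorm}),
\[
\|u\|_{\mathscr{H}_E}^2=\sum_{n\ge1}a_n\,|u_n-u_{n-1}|^2=\sum_{n\ge1}\frac{J_n^2}{a_n}.
\]

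For the implication $\sum_n 1/a_n=\infty\Rightarrow$ essential selfadjointness (the easy half) I would exploit monotonicity: normalizing $u_0>0$, the recursion $J_{n+1}=J_n+u_n$ with $J_1=u_0$ forces $u_n,J_n$ to be positive and increasing, so $J_n\ge u_0$ for all $n$. Hence
\[
\|u\|_{\mathscr{H}_E}^2=\sum_{n\ge1}\frac{J_n^2}{a_n}\ge u_0^{2}\sum_{n\ge1}\frac{1}{a_n}=\infty,
\]
so no nonzero defect vector lies in $\mathscr{H}_E$, giving $m=0$. Conceptually this says that an unbounded resistance diameter (Lemma \ref{lem:metric}) leaves no room for a boundary term.

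The converse is the hard part. When $\sum_n 1/a_n<\infty$ the resistance metric is bounded, and one hopes that a ``boundary at infinity'' produces a finite-energy defect vector, i.e. $m=1$. I would attempt this by a transfer-matrix / discrete-Gronwall analysis of the system above, seeking two-sided growth bounds on $J_n$ that make $\sum_n J_n^2/a_n$ converge, equivalently deciding which of the two asymptotic solutions of the recurrence the boundary-selected solution is subordinate to. I expect this estimate to be the main obstacle: $\sum_n 1/a_n<\infty$ controls only the total resistance (the length of the half-line in resistance coordinates), not the growth of the current, and since $J_n\ge nu_0$ grows at least linearly, convergence of $\sum_n J_n^2/a_n$ requires genuine control of $J_n$ against $a_n$. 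Pinning down that growth rate is where the real content sits, and one should be prepared for the clean criterion $\sum_n 1/a_n<\infty$ to need supplementing by a weighted summability condition on $(1/a_n)$; resolving exactly this point is the crux of the conjecture.
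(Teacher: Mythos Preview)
The paper does not prove this statement: it is explicitly labeled a \emph{Conjecture}, and the only thing the paper establishes is the special case $a_n=Q^n$, $Q>1$ (Lemma~\ref{lem:Q11}), handled by a transfer-matrix asymptotic computation. So there is no ``paper's own proof'' to compare against.

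That said, your proposal goes further than the paper in one direction and is honest about the other. Your argument for the implication $\sum_n 1/a_n=\infty\Rightarrow m=0$ is correct and complete: the reduction to the current variable $J_n=a_n(u_n-u_{n-1})$ with $J_{n+1}=J_n+u_n$, $J_1=u_0$, the observation that the boundary vertex forces the pointwise solution space to be one-dimensional, and the monotonicity estimate $J_n\ge u_0$ giving $\|u\|_{\mathscr{H}_E}^2=\sum_n J_n^2/a_n\ge u_0^2\sum_n 1/a_n=\infty$ are all sound. This half is not proved anywhere in the paper. For the converse you correctly identify the obstruction: $\sum_n 1/a_n<\infty$ controls only the total resistance, while finiteness of $\sum_n J_n^2/a_n$ with $J_n\ge nu_0$ demands quantitative growth of $a_n$. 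The paper's $Q^n$ computation succeeds because the transfer matrices stabilize and one can read off the subordinate solution; for general $(a_n)$ that heuristic breaks down, and your suspicion that an auxiliary weighted condition may be needed is exactly why the authors left this as a conjecture. In short: your easy half is a genuine proof the paper lacks, and your assessment of the hard half matches the paper's own uncertainty.
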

\begin{rem}
Below we compute the deficiency space in an example with index values
$\left(1,1\right)$.\end{rem}
\begin{lem}
\label{lem:Q11}Let $\left(V,E,c=\left\{ a_{n}\right\} \right)$ be
as above. Let $Q>1$ and set $a_{n}:=Q^{n}$, $n\in\mathbb{Z}_{+}$;
then $\Delta$ has deficiency indices $\left(1,1\right)$.\end{lem}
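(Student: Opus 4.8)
The plan is to reduce the deficiency-index count to a concrete three-term difference equation and then exhibit a single square-summable defect vector. By Lemma \ref{lem:lapd} (equivalently Corollary \ref{cor:P}) the indices are $(m,m)$ with $m=\dim\{u\in\mathscr{H}_{E}:\Delta u=-u\}$, and by the proof of Lemma \ref{lem:lapd} the operator $\Delta^{*}$ acts on functions by the pointwise formula (\ref{eq:Le1}). First I would write $\Delta u=-u$ coordinatewise: at the base point it reads $a_{1}(u_{0}-u_{1})=-u_{0}$, while for $n\geq 1$ it is $(a_{n}+a_{n+1})u_{n}-a_{n}u_{n-1}-a_{n+1}u_{n+1}=-u_{n}$. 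The base equation fixes $u_{1}$ from $u_{0}$, and the recurrence then determines each $u_{n+1}$ from $u_{n-1},u_{n}$; hence the solution space \emph{among all functions on $V$} is one-dimensional, parametrized by $u_{0}$. This already yields $m\leq 1$, so the whole problem is to decide whether the nonzero solution lies in $\mathscr{H}_{E}$.

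To test normalizability I would pass to the differences $d_{n}:=u_{n}-u_{n-1}$. Writing $(\Delta u)_{n}=a_{n}d_{n}-a_{n+1}d_{n+1}$ and the base equation as $a_{1}d_{1}=u_{0}$, a single summation telescopes the recurrence into the clean identity
\[
a_{n+1}d_{n+1}=\sum_{j=0}^{n}u_{j}=:S_{n},\qquad n\geq 0.
\]
Since in this nearest-neighbor model (\ref{eq:en6}) gives $\left\Vert u\right\Vert_{\mathscr{H}_{E}}^{2}=\sum_{n\geq 1}a_{n}\left|d_{n}\right|^{2}$, finiteness of the energy is exactly summability of $a_{n}\left|d_{n}\right|^{2}=\left|S_{n-1}\right|^{2}/a_{n}$.

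With $a_{n}=Q^{n}$ the estimate is elementary. Normalizing $u_{0}>0$ (the solution is real up to scale), the identity $d_{n+1}=S_{n}/a_{n+1}>0$ shows by induction that $(u_{n})$ is positive and strictly increasing, so $S_{n}\leq(n+1)u_{n}$ and
\[
u_{n+1}=u_{n}+\frac{S_{n}}{a_{n+1}}\leq u_{n}\left(1+(n+1)Q^{-(n+1)}\right).
\]
Because $\sum_{k}kQ^{-k}<\infty$ for $Q>1$, the product $\prod_{k}(1+kQ^{-k})$ converges, so $(u_{n})$ is bounded, hence increases to a finite limit $L$. Feeding $S_{n-1}\leq nL$ back gives $a_{n}\left|d_{n}\right|^{2}\leq n^{2}L^{2}Q^{-n}$, and $\sum_{n}n^{2}Q^{-n}<\infty$; thus the defect vector is normalizable, $m=1$, and the indices are $(1,1)$.

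The step I expect to be the crux is the boundedness of $(u_{n})$: a priori the unique solution could grow, which would force $m=0$. The telescoped identity combined with the monotonicity and the convergent-product bound is precisely what controls this, and it is here that the hypothesis $Q>1$ — equivalently $\sum 1/a_{n}<\infty$, cf. (\ref{eq:ex1-1}) and the preceding Conjecture — is used. Everything else is bookkeeping with geometric sums.
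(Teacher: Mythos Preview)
Your argument is correct. The reduction to $m\le 1$ via the base-point equation, the telescoped identity $a_{n+1}d_{n+1}=S_{n}$, and the convergent-product bound on $(u_{n})$ all check out, and the final summability estimate $a_{n}|d_{n}|^{2}\le n^{2}L^{2}Q^{-n}$ is clean.

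Your route differs from the paper's. The paper writes the defect equation as a two-step recurrence, passes to the $2\times 2$ transfer matrix $\begin{bmatrix}\frac{1}{Q^{n+1}}+\frac{1+Q}{Q} & -\frac{1}{Q}\\ 1 & 0\end{bmatrix}$, and reads off the asymptotic eigenvalues $1$ and $1/Q$ to conclude $u_{n+1}-u_{n}\sim\frac{1}{Q}(u_{n}-u_{n-1})$, hence a geometric tail for the energy sum. Your approach avoids the transfer matrix entirely: by summing the recurrence you convert the second-order relation into the first-order identity $a_{n+1}d_{n+1}=\sum_{j\le n}u_{j}$, and then control the partial sums directly through monotonicity and the product $\prod(1+kQ^{-k})$. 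What this buys you is a fully nonasymptotic and elementary bound---no ``$\sim$'' that needs justification---and the argument transparently isolates where $Q>1$ (equivalently $\sum 1/a_{n}<\infty$) enters. The transfer-matrix picture, on the other hand, makes visible the two asymptotic modes (constant and $Q^{-n}$) and generalizes more readily to perturbations of the recurrence, at the cost of leaving the passage from asymptotic eigenvalues to actual decay somewhat informal.
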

\begin{proof}
Suppose $\Delta u=-u$, $u\in\mathscr{H}_{E}$. Then, 
\begin{align*}
-u_{1} & =Q\left(u_{1}-u_{0}\right)+Q^{2}\left(u_{1}-u_{2}\right)\Longleftrightarrow u_{2}=\left(\frac{1}{Q^{2}}+\frac{1+Q}{Q}\right)u_{1}-\frac{1}{Q}u_{0}\\
-u_{2} & =Q^{2}\left(u_{2}-u_{1}\right)+Q^{3}\left(u_{2}-u_{3}\right)\Longleftrightarrow u_{3}=\left(\frac{1}{Q^{3}}+\frac{1+Q}{Q}\right)u_{2}-\frac{1}{Q}u_{1}
\end{align*}
and by induction, 
\[
u_{n+1}=\left(\frac{1}{Q^{n+1}}+\frac{1+Q}{Q}\right)u_{n}-\frac{1}{Q}u_{n-1},\; n\in\mathbb{Z}_{+}
\]
i.e., $u$ is determined by the following matrix equation:

\[
\begin{bmatrix}u_{n+1}\\
u_{n}
\end{bmatrix}=\begin{bmatrix}\frac{1}{Q^{n+1}}+\frac{1+Q}{Q} & -\frac{1}{Q}\\
1 & 0
\end{bmatrix}\begin{bmatrix}u_{n}\\
u_{n-1}
\end{bmatrix}
\]

The eigenvalues of the coefficient matrix are 
\begin{align*}
\lambda_{\pm} & =\frac{1}{2}\left(\frac{1}{Q^{n+1}}+\frac{1+Q}{Q}\pm\sqrt{\left(\frac{1}{Q^{n+1}}+\frac{1+Q}{Q}\right)^{2}-\frac{4}{Q}}\right)\\
 & \sim\frac{1}{2}\left(\frac{1+Q}{Q}\pm\left(\frac{Q-1}{Q}\right)\right)=\begin{cases}
1\\
\dfrac{1}{Q}
\end{cases}\mbox{as \ensuremath{n\rightarrow\infty}.}
\end{align*}
Equivalently, as $n\rightarrow\infty$, we have
\[
u_{n+1}\sim\left(\frac{1+Q}{Q}\right)u_{n}-\frac{1}{Q}u_{n-1}=\left(1+\frac{1}{Q}\right)u_{n}-\frac{1}{Q}u_{n-1}
\]
and so 
\[
u_{n+1}-u_{n}\sim\frac{1}{Q}\left(u_{n}-u_{n-1}\right).
\]
Therefore, for the tail-summation, we have:
\[
\sum_{n}Q^{n}\left(u_{n+1}-u_{n}\right)^{2}=\mbox{const}\sum_{n}\frac{\left(Q-1\right)^{2}}{Q^{n+2}}<\infty
\]
which implies $\left\Vert u\right\Vert _{\mathscr{H}_{E}}<\infty$.
\textcolor{blue}{}
\end{proof}
Next, we give a random walk interpretation of Lemma \ref{lem:Q11}.
See Remark \ref{rem:rw}, and Fig \ref{fig:tp}.
\begin{rem}[Harmonic functions in $\mathscr{H}_{E}$]
 \label{rem:Qharm}Note that in Example \ref{ex:1d} (Lemma \ref{lem:Q11}),
the space of harmonic functions in $\mathscr{H}_{E}$ is one-dimensional;
in fact if $Q>1$ is fixed, then 
\[
\left\{ u\in\mathscr{H}_{E}\:\big|\:\Delta u=0\right\} 
\]
is spanned by $u=\left(u_{n}\right)_{n=0}^{\infty}$, $u_{n}=\frac{1}{Q^{n}}$,
$n\in\mathbb{N}$; and of course $\Vert1/Q^{n}\Vert_{\mathscr{H}_{E}}^{2}<\infty$.\end{rem}
\begin{proof}
This is immediate from Lemma \ref{lem:hf}.\end{proof}
\begin{rem}
\label{rem:domFri}For the domain of the Friedrichs extension $\Delta_{Fri}$,
we have:
\begin{equation}
dom(\Delta_{Fri})=\left\{ f\in\mathscr{H}_{E}\:|\:\left(f\left(x\right)-f\left(x+1\right)\right)Q^{x}\in l^{2}\left(\mathbb{Z}_{+}\right)\right\} \label{eq:1d-2-1}
\end{equation}
i.e., 
\[
dom(\Delta_{Fri})=\left\{ f\in\mathscr{H}_{E}\:|\:\sum_{x=0}^{\infty}\left|f\left(x\right)-f\left(x+1\right)\right|^{2}Q^{2x}<\infty\right\} .
\]
\end{rem}
\begin{proof}
By Theorem \ref{thm:eframe}, we have the following representation,
valid for all $f\in\mathscr{H}_{E}$: 
\begin{align*}
f & =\sum_{x}\left\langle f,Q^{\frac{x}{2}}v_{\left(x,x+1\right)}\right\rangle _{\mathscr{H}_{E}}Q^{\frac{x}{2}}v_{\left(x,x+1\right)}\\
 & =\sum_{x}\left(f\left(x\right)-f\left(x+1\right)\right)Q^{x}v_{\left(x,x+1\right)};
\end{align*}
and
\[
\left\langle f,\Delta f\right\rangle _{\mathscr{H}_{E}}=\sum_{x}\left|f\left(x\right)-f\left(x+1\right)\right|^{2}Q^{2x}.
\]
The desired conclusion (\ref{eq:1d-2-1}) now follows from Theorem
\ref{thm:Deltaf} above, and the characterization of $\Delta_{Fri}$,
see e.g. \cite{DS88b,AG93}.\end{proof}
\begin{defn}
Let $G=\left(V,E,c\right)$ be a connected graph. The set of transition
probabilities $\left(p_{xy}\right)$ is said to be reversible if there
exists $c:V\rightarrow\mathbb{R}_{+}$ s.t. 
\begin{equation}
c\left(x\right)p_{xy}=c\left(y\right)p_{yx};\label{eq:tp1}
\end{equation}
and then 
\begin{equation}
c_{xy}:=c\left(x\right)p_{xy}\label{eq:tpc}
\end{equation}
is a system of conductance. Conversely, for a system of conductance
$\left(c_{xy}\right)$ we set 
\begin{align}
c\left(x\right) & :=\sum_{y\sim x}c_{xy},\;\mbox{and}\label{eq:cond1}\\
p_{xy} & :=\frac{c_{xy}}{c\left(x\right)}\label{eq:tp-1}
\end{align}
and so $\left(p_{xy}\right)$ is a set of transition probabilities.
See Fig \ref{fig:tp2} below.

\begin{figure}[H]
\begin{tabular}{cc}
\includegraphics[scale=0.45]{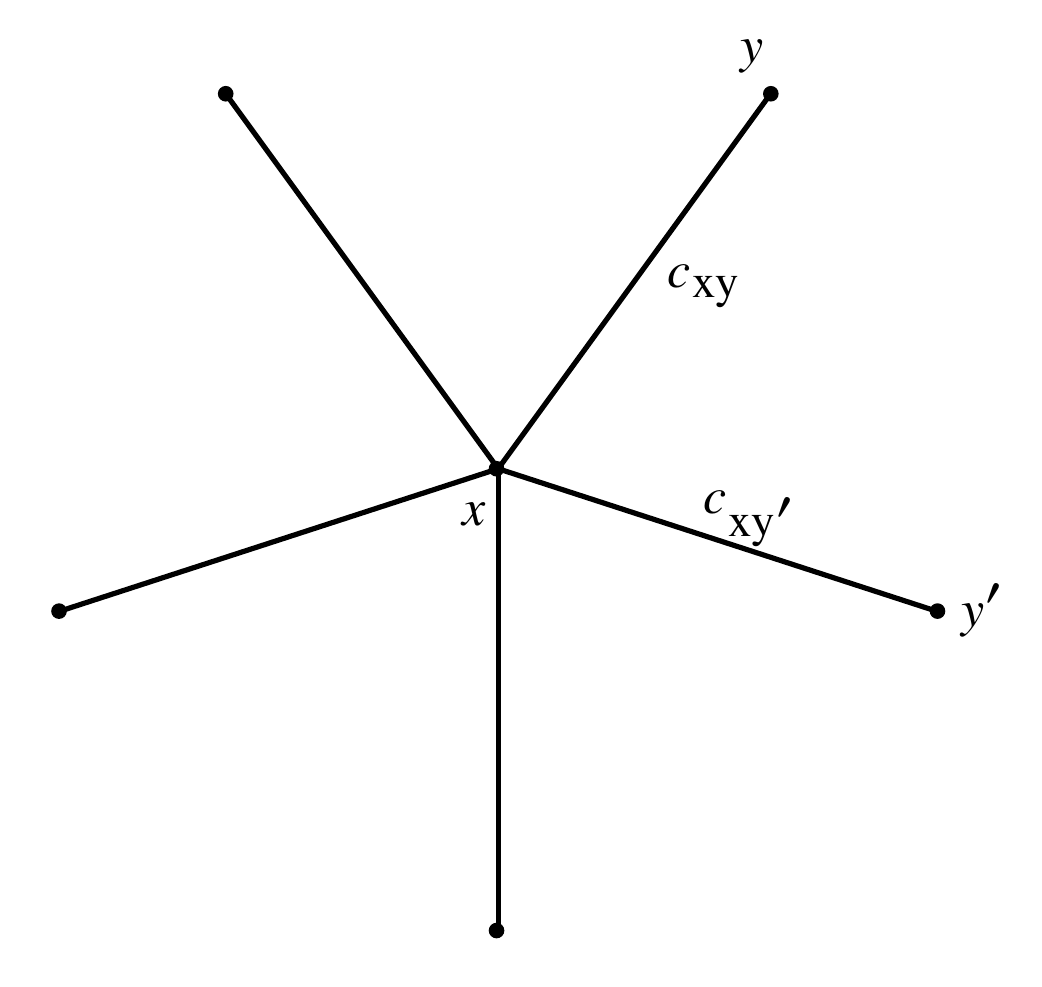} & \includegraphics[scale=0.45]{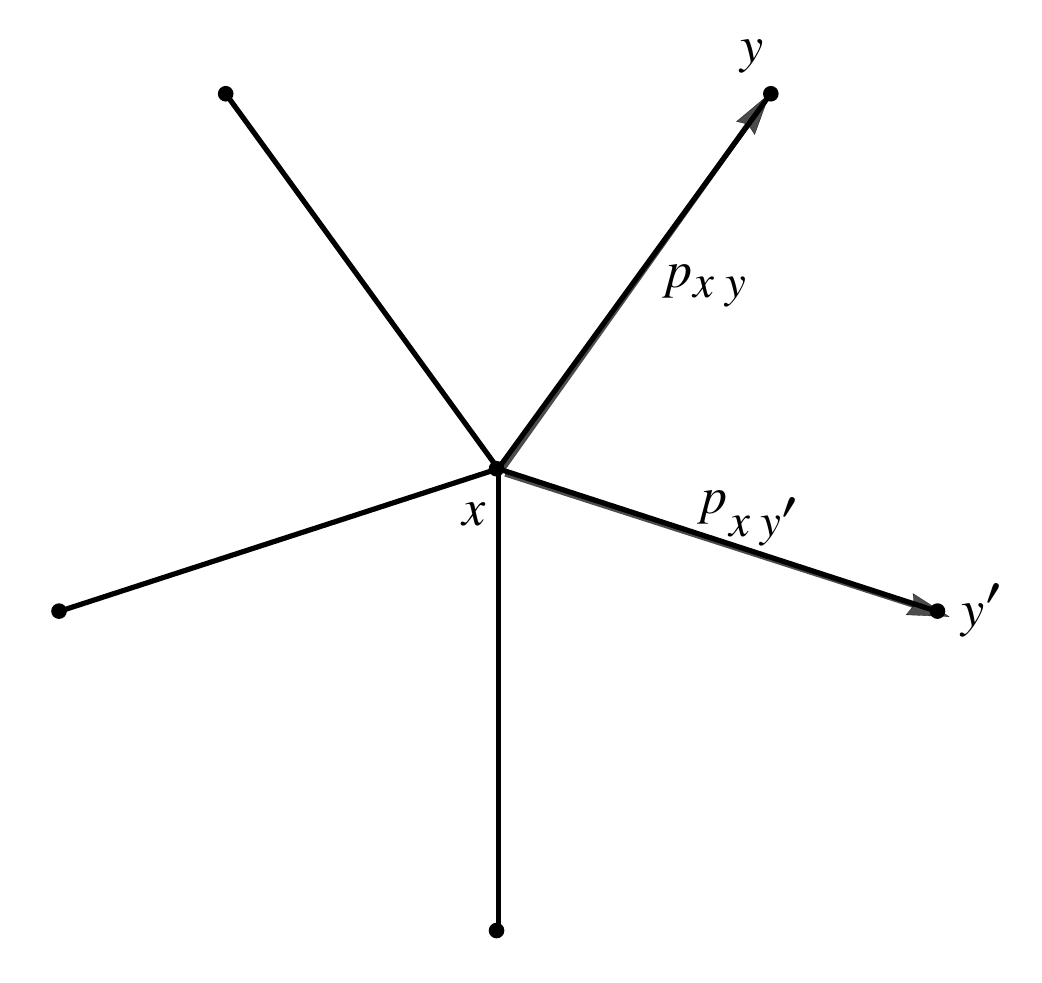}\tabularnewline
$c_{xy}$, $y\sim x$ & transition probabilities\tabularnewline
\end{tabular}

\protect\caption{\label{fig:tp2}neighbors of $x$}

\end{figure}

\end{defn}
Recall the graph Laplacian in (\ref{eq:Le1}) can be written as 
\begin{equation}
\left(\Delta u\right)_{n}=c\left(n\right)\left(u_{n}-p_{-}\left(n\right)u_{n-1}-p_{+}\left(n\right)u_{n+1}\right),\;\forall n\in\mathbb{Z}_{+};\label{eq:Le2}
\end{equation}
where 
\begin{equation}
c\left(n\right):=a_{n}+a_{n+1}\label{eq:Lec}
\end{equation}
and 
\begin{equation}
p_{-}\left(n\right):=\frac{a_{n}}{c\left(n\right)},\; p_{+}\left(n\right):=\frac{a_{n+1}}{c\left(n\right)}\label{eq:Lep}
\end{equation}
are the left/right transition probabilities, as shown in Fig \ref{fig:tp1}.

\begin{figure}[H]
\includegraphics[scale=0.6]{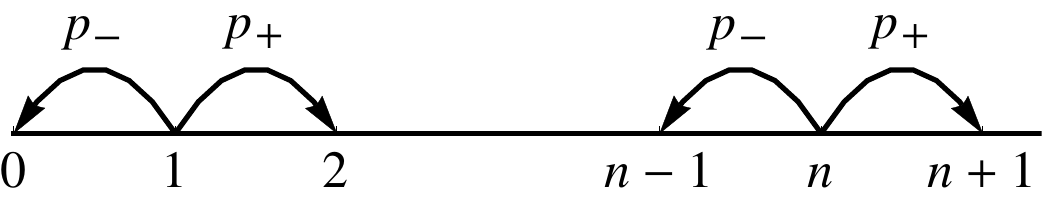}

\protect\caption{\label{fig:tp1}The transition probabilities $p_{+},p_{-}$, in the
case of constant transition probabilities, i.e., $p_{+}\left(n\right)=p_{+}$,
and $p_{-}\left(n\right)=p_{-}$ for all $n\in\mathbb{Z}_{+}$.}

\end{figure}

In the case $a_{n}=Q^{n}$, $Q>1$, as in Lemma \ref{lem:Q11}, we
have 
\begin{equation}
c\left(n\right):=Q^{n}+Q^{n+1},\;\mbox{and }\label{eq:cn}
\end{equation}
\begin{align}
p_{+} & :=p_{+}\left(n\right)=\frac{Q^{n+1}}{Q^{n}+Q^{n+1}}=\frac{Q}{1+Q}\label{eq:pplus}\\
p_{-} & :=p_{-}\left(n\right)=\frac{Q^{n}}{Q^{n}+Q^{n+1}}=\frac{1}{1+Q}\label{eq:pmimus}
\end{align}
For all $n\in\mathbb{Z}_{+}\cup\left\{ 0\right\} $, set 
\begin{equation}
\left(Pu\right)_{n}:=p_{-}u_{n-1}+p_{+}u_{n+1}.\label{eq:cp}
\end{equation}
Note $\left(Pu\right)_{0}=u_{1}$. By (\ref{eq:Le2}), we have 
\begin{equation}
\Delta=c\left(1-P\right).\label{eq:dp}
\end{equation}

In particular, $p_{+}>\frac{1}{2}$, i.e., a random walker has probability
$>\frac{1}{2}$ of moving to the right. It follows that 
\[
\underset{=\mbox{ dist to \ensuremath{\infty}}}{\underbrace{\mbox{travel time}\left(n,\infty\right)}}<\infty;
\]
and so $\Delta$ is not essentially selfadjoint, i.e., indices $\left(1,1\right)$.
\begin{lem}
Let $\left(V,E,\Delta(=\Delta_{c})\right)$ be as above, where the
conductance $c$ is given by $c_{n-1,n}=Q^{n}$, $n\in\mathbb{Z}_{+}$,
$Q>1$ (see Lemma \ref{lem:Q11}). For all $\lambda>0$, there exists
$f_{\lambda}\in\mathscr{H}_{E}$ satisfying $\Delta f_{\lambda}=\lambda f_{\lambda}$.\end{lem}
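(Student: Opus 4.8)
The plan is to realize the eigenvalue equation $\Delta f_{\lambda}=\lambda f_{\lambda}$ as a pointwise difference equation on $V=\left\{0\right\}\cup\mathbb{Z}_{+}$ and to show, by analyzing its asymptotics, that the resulting solution has finite energy. First I would write the equation vertex-by-vertex using \eqref{eq:Le1}: at the boundary vertex $0$ it reads $Q\left(f_{0}-f_{1}\right)=\lambda f_{0}$, hence $f_{1}=\left(1-\lambda/Q\right)f_{0}$, while for $n\ge 1$ it becomes the three-term recurrence
\[
f_{n+1}=\Big(1+\tfrac{1}{Q}-\tfrac{\lambda}{Q^{n+1}}\Big)f_{n}-\tfrac{1}{Q}f_{n-1}.
\]
This shows that the space of pointwise solutions is one-dimensional, parametrized by $f_{0}$; I would fix $f_{0}=1$, which forces $f_{1}\neq f_{0}$ and hence a non-constant $f_{\lambda}$, i.e.\ a nonzero vector in $\mathscr{H}_{E}$ (recall vectors in $\mathscr{H}_{E}$ are functions modulo constants). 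As in Lemma \ref{lem:lapd} and Corollary \ref{cor:P}, such a pointwise solution is a genuine eigenvector of the maximal operator $\left(\Delta|_{\mathscr{D}_{E}}\right)^{*}$ once we verify that its energy is finite.

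The key reformulation is in terms of the currents $J_{n}:=Q^{n}\left(f_{n}-f_{n-1}\right)$. A direct computation turns \eqref{eq:Le1} into the divergence form $\left(\Delta f\right)_{n}=J_{n}-J_{n+1}$, so the eigenvalue equation is equivalent to the first-order relation $J_{n+1}=J_{n}-\lambda f_{n}$ with $J_{1}=-\lambda f_{0}$. Since the energy is
\[
\left\Vert f\right\Vert _{\mathscr{H}_{E}}^{2}=\sum_{n\ge 1}Q^{n}\left|f_{n}-f_{n-1}\right|^{2}=\sum_{n\ge 1}Q^{-n}\left|J_{n}\right|^{2},
\]
it suffices to control the growth of $J_{n}$. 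If I can show $\sup_{n}\left|f_{n}\right|<\infty$, then $\left|J_{n+1}-J_{n}\right|=\lambda\left|f_{n}\right|$ is bounded, giving $\left|J_{n}\right|=O\left(n\right)$ and hence $\sum_{n}Q^{-n}O\left(n^{2}\right)<\infty$, which proves $f_{\lambda}\in\mathscr{H}_{E}$ and completes the argument.

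Thus the main obstacle is the boundedness of $\left(f_{n}\right)$, which I would obtain from the asymptotics of the recurrence. Writing it as a first-order system $x_{n+1}=M_{n}x_{n}$ with $x_{n}=\left(f_{n},f_{n-1}\right)$, the matrices $M_{n}$ converge to the constant matrix with characteristic polynomial $\left(t-1\right)\left(t-1/Q\right)$, whose eigenvalues $1$ and $1/Q$ have distinct moduli; moreover the perturbation is summable, $\sum_{n}\left\Vert M_{n}-M_{\infty}\right\Vert =\lambda\sum_{n}Q^{-\left(n+1\right)}<\infty$. By the discrete Poincar\'e--Perron/Levinson theorem this yields a fundamental system of solutions asymptotic to $1^{n}$ and $Q^{-n}$ respectively, both bounded; in particular every solution, including ours, satisfies $\sup_{n}\left|f_{n}\right|<\infty$ (indeed $f_{n}$ converges). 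This is the delicate step: a naive Gronwall estimate only produces a geometric bound with an uncontrolled base, which is too lossy to keep the energy finite, whereas the spectral radius $1$ of $M_{\infty}$ together with the separation of its eigenvalues is exactly what forces solutions to stay bounded. With boundedness in hand, the current estimate above closes the proof; as a consistency check, letting $\lambda\to 0$ recovers the subdominant branch $f_{n}=Q^{-n}$, matching the harmonic function of Remark \ref{rem:Qharm}.
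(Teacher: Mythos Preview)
Your argument is correct and follows the same overall strategy as the paper: reduce $\Delta f_{\lambda}=\lambda f_{\lambda}$ to the three-term recurrence, pass to the first-order system $x_{n+1}=M_{n}x_{n}$, and read off the eigenvalues $1$ and $1/Q$ of the limiting matrix $M_{\infty}$ to control the tail of the energy. The paper then argues (as in Lemma~\ref{lem:Q11}) that the differences satisfy $f_{\lambda}(n+1)-f_{\lambda}(n)\sim Q^{-1}\bigl(f_{\lambda}(n)-f_{\lambda}(n-1)\bigr)$ and plugs this into $\sum_{n}Q^{n}\lvert f_{\lambda}(n+1)-f_{\lambda}(n)\rvert^{2}$.

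Your variant, rewriting the equation in divergence form via the currents $J_{n}=Q^{n}\bigl(f_{n}-f_{n-1}\bigr)$ so that $J_{n+1}=J_{n}-\lambda f_{n}$ and $\lVert f\rVert_{\mathscr{H}_{E}}^{2}=\sum_{n}Q^{-n}\lvert J_{n}\rvert^{2}$, is a genuine simplification: you only need $\sup_{n}\lvert f_{n}\rvert<\infty$, which follows cleanly from the discrete Levinson theorem with summable perturbation $\sum_{n}\lVert M_{n}-M_{\infty}\rVert=\lambda\sum_{n}Q^{-(n+1)}<\infty$, and then $\lvert J_{n}\rvert=O(n)$ already forces $\sum_{n}Q^{-n}n^{2}<\infty$. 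This sidesteps having to identify the precise asymptotic rate of the differences (the paper's line $f_{\lambda}(n+1)\sim Q^{-1}f_{\lambda}(n)$ is really a statement about the subdominant branch, and what is actually used is the decay of the increments). Your route is therefore a bit more robust, at the cost of invoking Levinson explicitly; the paper's route gives sharper asymptotics for $f_{\lambda}$ but leans on the analogy with Lemma~\ref{lem:Q11}.
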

\begin{proof}
By (\ref{eq:dp}), we have $\Delta f_{\lambda}=\lambda f_{\lambda}\Longleftrightarrow Pf_{\lambda}=\left(1-\frac{\lambda}{c}\right)f_{\lambda}$,
i.e., 
\[
\frac{1}{1+Q}f_{\lambda}\left(n-1\right)+\frac{Q}{1+Q}f_{\lambda}\left(n+1\right)=\left(1-\frac{\lambda}{Q^{n-1}\left(1+Q\right)}\right)f_{\lambda}\left(n\right)
\]
and so 
\begin{equation}
f_{\lambda}\left(n+1\right)=\left(\frac{1+Q}{Q}-\frac{\lambda}{Q^{n}}\right)f_{\lambda}\left(n\right)-\frac{1}{Q}f_{\lambda}\left(n-1\right).\label{eq:Fe0}
\end{equation}
This corresponds to the following matrix equation:

\begin{align*}
\begin{bmatrix}f\left(n+1\right)\\
f\left(n\right)
\end{bmatrix} & =\begin{bmatrix}\frac{1+Q}{Q}-\frac{\lambda}{Q^{n}} & -\frac{1}{Q}\\
1 & 0
\end{bmatrix}\begin{bmatrix}f\left(n\right)\\
f\left(n-1\right)
\end{bmatrix}\\
 & \sim\begin{bmatrix}\frac{1+Q}{Q} & -\frac{1}{Q}\\
1 & 0
\end{bmatrix}\begin{bmatrix}f\left(n\right)\\
f\left(n-1\right)
\end{bmatrix},\;\mbox{as \ensuremath{n\rightarrow\infty}.}
\end{align*}
The eigenvalues of the coefficient matrix are given by 
\[
\lambda_{\pm}\sim\frac{1}{2}\left(\frac{1+Q}{Q}\pm\left(\frac{Q-1}{Q}\right)\right)=\begin{cases}
1\\
\dfrac{1}{Q}
\end{cases}\mbox{as \ensuremath{n\rightarrow\infty}.}
\]
That is, as $n\rightarrow\infty$, 
\[
f_{\lambda}\left(n+1\right)\sim\left(\frac{1+Q}{Q}\right)f_{\lambda}\left(n\right)-\frac{1}{Q}f_{\lambda}\left(n-1\right);
\]
i.e.,
\begin{equation}
f_{\lambda}\left(n+1\right)\sim\frac{1}{Q}f_{\lambda}\left(n\right);\label{eq:Fe1}
\end{equation}
and so the tail summation of $\left\Vert f_{\lambda}\right\Vert _{\mathscr{H}_{E}}^{2}$
is finite. (See the proof of Lemma \ref{lem:Q11}.) We conclude that
$f_{\lambda}\in\mathscr{H}_{E}$.\end{proof}
\begin{cor}
Let $\left(V,E,\Delta\right)$ be as in the lemma. The Friedrichs
extension $\Delta_{Fri}$ has continuous spectrum $[0,\infty)$.\end{cor}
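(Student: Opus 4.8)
The plan is to combine the positivity built into the factorization with the family of generalized eigenfunctions produced in the preceding lemma. First, by Theorem \ref{thm:Deltaf} we have $\Delta_{Fri}=LL^{*}$, so $\Delta_{Fri}$ is selfadjoint and nonnegative; hence $\sigma(\Delta_{Fri})\subseteq[0,\infty)$ immediately, which disposes of one inclusion. It then remains to prove the reverse inclusion $[0,\infty)\subseteq\sigma(\Delta_{Fri})$ together with the assertion that the spectrum carries no point part, since $[0,\infty)$ has no isolated points and a nonnegative selfadjoint operator with spectrum $[0,\infty)$ and empty point spectrum has purely continuous spectrum.

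For the inclusion $[0,\infty)\subseteq\sigma(\Delta_{Fri})$ I would use Weyl's criterion: for fixed $\lambda>0$ take the solution $f_{\lambda}\in\mathscr{H}_{E}$ of $\Delta f_{\lambda}=\lambda f_{\lambda}$ from the preceding lemma and manufacture from it a sequence of \emph{genuine} trial vectors $u_{N}\in dom(\Delta_{Fri})$ by spatially truncating and tapering $f_{\lambda}$ so that each $u_{N}$ satisfies the domain condition of Remark \ref{rem:domFri}. Since $\Delta f_{\lambda}=\lambda f_{\lambda}$ holds pointwise, $(\Delta-\lambda)u_{N}$ is supported only in the tapering window near the cut, and the point $\lambda=0$ is recovered afterwards by letting $\lambda\downarrow0$ and using closedness of the spectrum. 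The role of the explicit asymptotics $f_{\lambda}(n+1)\sim\tfrac{1}{Q}f_{\lambda}(n)$ from the previous proof is to control the size of this boundary term.

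To rule out eigenvalues I would argue that any $\mathscr{H}_{E}$-eigenfunction of $\Delta_{Fri}$ with eigenvalue $\lambda$ must be a pointwise solution of $\Delta f=\lambda f$ satisfying the boundary relation at the base point $o=0$; the transfer-matrix analysis in the preceding lemma shows this solution space is one dimensional and spanned by $f_{\lambda}$, whose increments obey $|f_{\lambda}(x)-f_{\lambda}(x+1)|^{2}Q^{2x}\to\text{const}\neq0$. By the characterization of $dom(\Delta_{Fri})$ in Remark \ref{rem:domFri} this forces $f_{\lambda}\notin dom(\Delta_{Fri})$, so $\lambda$ is not an eigenvalue; the case $\lambda=0$ is handled the same way using the harmonic function $Q^{-n}$ of Remark \ref{rem:Qharm}. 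Combining the three steps yields $\sigma(\Delta_{Fri})=[0,\infty)$ with empty point part, i.e. purely continuous spectrum.

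The main obstacle is the second step. The delicate feature, already visible in this model, is that $f_{\lambda}$ lies in $\mathscr{H}_{E}$ yet fails the Friedrichs domain condition, so a naive truncation produces a boundary error whose energy need \emph{not} decay; making Weyl's criterion work therefore requires choosing the taper, and its rate relative to $Q^{n}$, so that the normalized error $\|(\Delta_{Fri}-\lambda)u_{N}\|_{\mathscr{H}_{E}}/\|u_{N}\|_{\mathscr{H}_{E}}$ genuinely tends to $0$. This is where the exponential weights make the estimate subtle, and it is the step I expect to absorb essentially all of the work. An alternative I would keep in reserve is to avoid truncation altogether and instead build the resolvent $(\Delta_{Fri}-\lambda)^{-1}$ directly from the factorization $\Delta_{Fri}=LL^{*}$, reading off the spectral type from its boundary behaviour as $\lambda$ crosses $[0,\infty)$.
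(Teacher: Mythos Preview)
Your no-eigenvalue argument (step three) is exactly the paper's: both use the domain characterization in Remark \ref{rem:domFri} together with the asymptotics $f_{\lambda}(n)\sim Q^{-n}$ to exclude $f_{\lambda}$ from $dom(\Delta_{Fri})$, handling $\lambda=0$ via the explicit harmonic function of Remark \ref{rem:Qharm}. Your use of $\Delta_{Fri}=LL^{*}$ for the inclusion $\sigma(\Delta_{Fri})\subseteq[0,\infty)$ is a clean addition the paper leaves implicit.

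Where you diverge is in establishing $[0,\infty)\subseteq\sigma(\Delta_{Fri})$. You propose Weyl sequences built by spatially truncating and tapering $f_{\lambda}$, and you are right to flag this as the obstacle: in fact the naive cutoff fails outright. A hard truncation at site $N$ leaves $(\Delta-\lambda)u_{N}$ with a jump of order one near $N$ (coming from $a_{N}(f_{\lambda}(N)-f_{\lambda}(N-1))\sim\mathrm{const}$), and the $\mathscr{H}_{E}$-norm weights that jump by $Q^{N}$, so the Weyl quotient \emph{blows up} rather than tending to zero. Any taper would have to beat the exponential weight, and it is not at all clear one can be arranged. The paper sidesteps this by smearing in the spectral parameter rather than truncating in space: it forms the wave packets
\[
F_{[\lambda_{0},\lambda_{1}]}(\cdot)=\int_{\lambda_{0}}^{\lambda_{1}}f_{\lambda}(\cdot)\,d\lambda
\]
and argues from the recursion \eqref{eq:Fe0} (via integration by parts in $\lambda$) that these lie in $dom(\Delta_{Fri})$, thereby exhibiting each $f_{\lambda}$ as a genuine generalized eigenfunction and reading off purely continuous Lebesgue spectrum of multiplicity one. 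The mechanism---averaging in $\lambda$ manufactures the extra decay that spatial cutoffs cannot---is precisely the missing idea, and is closer in spirit to your ``reserve'' resolvent alternative than to the Weyl-sequence route you put first.
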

\begin{proof}
Fix $\lambda\geq0$. We prove that if $\Delta f_{\lambda}=\lambda f_{\lambda}$,
$f\in\mathscr{H}_{E}$, then $f_{\lambda}\notin dom(\Delta_{Fri})$. 

Note for $\lambda=0$, $f_{0}$ is harmonic, and so $f_{0}=k\left(\frac{1}{Q^{n}}\right)_{n=0}^{\infty}$
for some constant $k\neq0$. See Remark \ref{rem:Qharm}. It follows
from (\ref{eq:1d-2-1}) that $f_{0}\notin dom(\Delta_{Fri})$. 

The argument for $\lambda>0$ is similar. Since as $n\rightarrow\infty$,
$f_{\lambda}\left(n\right)\sim\frac{1}{Q^{n}}$ (eq. (\ref{eq:Fe1})),
so by (\ref{eq:1d-2-1}) again, $f_{\lambda}\notin dom(\Delta_{Fri})$.

However, if $\lambda_{0}<\lambda_{1}$ in $[0,\infty)$ then 
\begin{equation}
\int_{\lambda_{0}}^{\lambda_{1}}f_{\lambda}\left(\cdot\right)d\lambda\in dom(\Delta_{Fri})\label{eq:Fe2}
\end{equation}
and so every $f_{\lambda}$, $\lambda\in[0,\infty)$, is a generalized
eigenfunction, i.e., the spectrum of $\Delta_{Fri}$ is purely continuous
with Lebesgue measure, and multiplicity one. 

The verification of (\ref{eq:Fe2}) follows from eq. (\ref{eq:Fe0}),
i.e., 
\begin{equation}
f_{\lambda}\left(n+1\right)=\left(\frac{1+Q}{Q}-\frac{\lambda}{Q^{n}}\right)f_{\lambda}\left(n\right)-\frac{1}{Q}f_{\lambda}\left(n-1\right).\label{eq:Fe3}
\end{equation}
Set 
\begin{equation}
F_{\left[\lambda_{0},\lambda_{1}\right]}:=\int_{\lambda_{0}}^{\lambda_{1}}f_{\lambda}\left(\cdot\right)d\lambda.\label{eq:Fe4}
\end{equation}
Then by (\ref{eq:Fe3}) and (\ref{eq:Fe4}),
\[
F_{\left[\lambda_{0},\lambda_{1}\right]}\left(n+1\right)=\frac{1+Q}{Q}F_{\left[\lambda_{0},\lambda_{1}\right]}\left(n\right)-\frac{1}{Q^{n}}\int_{\lambda_{0}}^{\lambda_{1}}\lambda f_{\lambda}\left(n\right)d\lambda-\frac{1}{Q}F_{\left[\lambda_{0},\lambda_{1}\right]}\left(n-1\right)
\]
and $\int_{\lambda_{0}}^{\lambda_{1}}\lambda f_{\lambda}d\lambda$
is computed using integration by parts.\end{proof}
\begin{rem}[Krein extension]
Set 
\begin{eqnarray*}
dom(\Delta^{Harm}) & := & dom(\Delta)+\mbox{Harmonic functions}\\
\Delta^{Harm} & := & \Delta^{*}\big|_{dom(\Delta^{Harm})};
\end{eqnarray*}
Then $\Delta^{Harm}\supset\Delta$ is a well-defined selfadjoint extension
of $\Delta$. It is semibounded, since 
\[
\left\langle \varphi+h,\Delta^{Harm}\left(\varphi+h\right)\right\rangle _{\mathscr{H}_{E}}=\left\langle \varphi+h,\Delta\varphi\right\rangle _{\mathscr{H}_{E}}=\left\langle \varphi,\Delta\varphi\right\rangle _{\mathscr{H}_{E}}\geq0
\]
for all $\varphi\in dom(\Delta)$, and $h$ harmonic. In fact $\Delta^{Harm}$
is the Krein extension. 
\end{rem}

\subsection{\label{ex:btree}A reversible walk on the binary tree (the binomial
model)}

Consider the binary tree as a set $V$ of vertices. To get a graph
$G=\left(V,E\right)$, take for edges $E$ the nearest neighbor lines
as follows:
\begin{equation}
V:=\left\{ o=\phi,\left(x_{1}x_{2}\cdots x_{n}\right),x_{i}\in\left\{ 0,1\right\} ,1\leq i\leq n\right\} ,\mbox{ and}\label{eq:bt1}
\end{equation}
\begin{align*}
 & E\left(\phi\right)=\left\{ 0,1\right\} \\
 & E\left(\left(x_{1}x_{2}\cdots x_{n}\right)\right)=\Big\{\underset{=x^{*}}{\underbrace{\left(x_{1}\cdots x_{n-1}\right)}},\underset{\text{extended words}}{\underbrace{\left(x0\right),\left(x1\right)}}\Big\},\mbox{ see Fig \ref{fig:bt}.}
\end{align*}

\begin{figure}[H]
\begin{tabular}{>{\centering}p{0.45\columnwidth}c}
\includegraphics[scale=0.45]{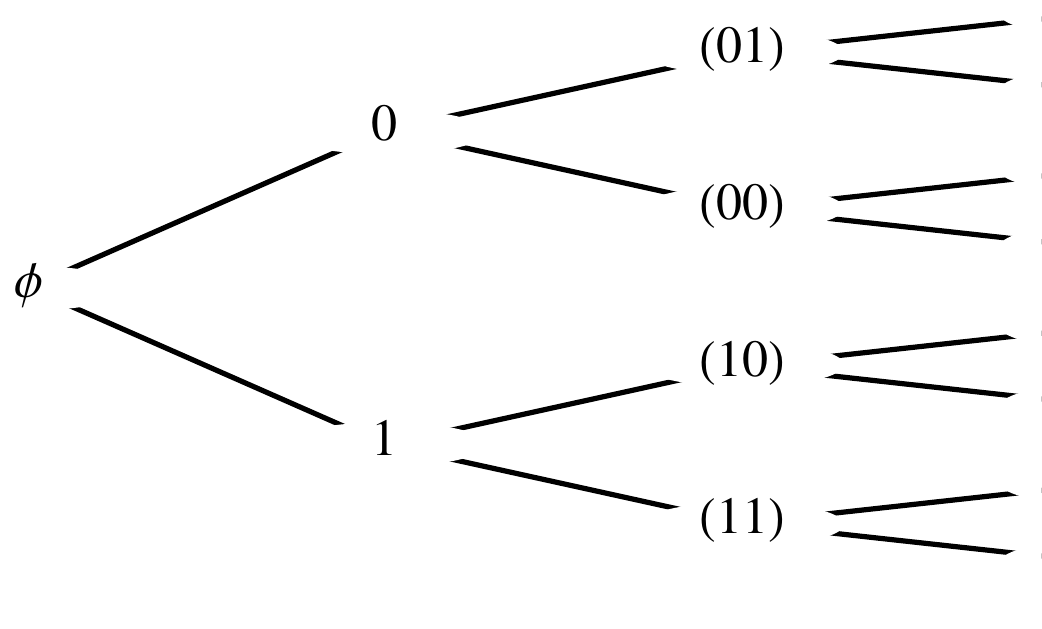} & \includegraphics[scale=0.45]{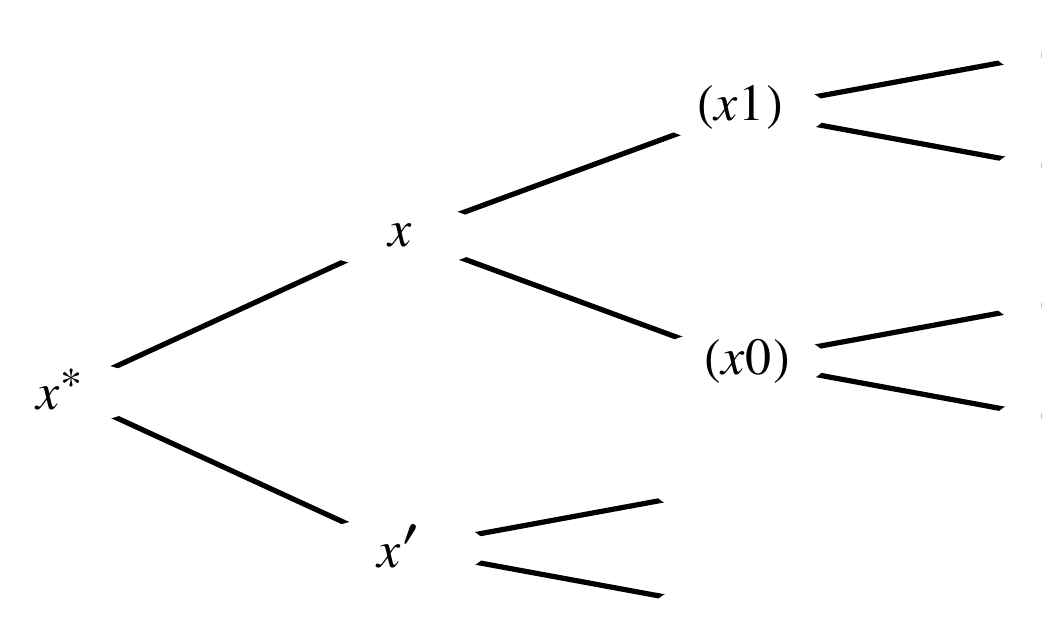}\tabularnewline
\end{tabular}

\protect\caption{\label{fig:bt}The binary tree model, three nearest neighbors.}

\end{figure}

Now fix transition probabilities at $o=\phi$, and at the vertices
$V'=V\backslash\left\{ \phi\right\} $ as follows:
\begin{equation}
\begin{cases}
\mbox{Prob}\left(x\rightarrow\left(x0\right)\right) & =p_{0}\\
\mbox{Prob}\left(x\rightarrow\left(x1\right)\right) & =p_{1}\\
\mbox{Prob}\left(x\rightarrow\left(x^{*}\right)\right) & =p_{-}
\end{cases}\label{eq:bt2}
\end{equation}
with $0<p_{0}<1$ (see Fig \ref{fig:bt}); such that $p_{0}+p_{1}+p_{-}=1$,
see Fig \ref{fig:btt}.

\begin{figure}[H]
\includegraphics[scale=0.5]{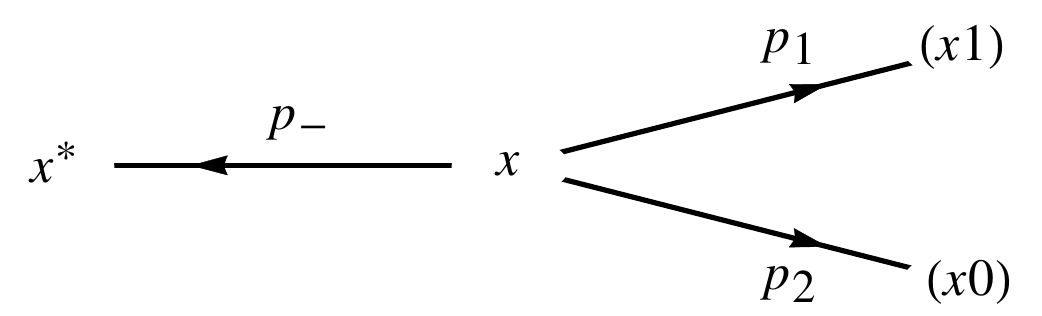}

\protect\caption{\label{fig:btt}Transition probabilities at a vertex $x\in V'$. The
reversible case with three nearest neighbors.}
\end{figure}

For $x\in V'$, set 
\begin{equation}
\begin{split}F_{0}\left(x\right)=\#\left\{ i\:|\: x_{i}=0\right\} \\
F_{1}\left(x\right)=\#\left\{ i\:|\: x_{i}=1\right\} 
\end{split}
;\label{eq:bt3}
\end{equation}
and 
\begin{equation}
\begin{split}\left|x\right| & :=x_{1}+x_{2}+\cdots+x_{n};\mbox{ so that}\\
 & F_{0}\left(x\right)+F_{1}\left(x\right)=\left|x\right|,\;\forall x\in V'.
\end{split}
\label{eq:bt4}
\end{equation}
Further, define the function $c:V\rightarrow\mathbb{R}_{+}$ as follows:
\begin{equation}
c\left(x\right)=\frac{p_{0}^{F_{0}\left(x\right)}p_{1}^{F_{1}\left(x\right)}}{p_{-}^{\left|x\right|}},\;\forall x\in V'.\label{eq:bt5}
\end{equation}

\begin{lem}
With the transition probabilities defined in (\ref{eq:bt2}), it follows
that the corresponding walk on $V$ is reversible via the function
$c:V\rightarrow\mathbb{R}_{+}$ defined in (\ref{eq:bt5}), i.e.,
we have the following identity for any edge $\left(xy\right)$ in
$G=\left(V,E\right)$:
\begin{equation}
c\left(x\right)\mbox{Prob}\left(x\rightarrow y\right)=c\left(y\right)\mbox{Prob}\left(y\rightarrow x\right).\label{eq:bt6}
\end{equation}
\end{lem}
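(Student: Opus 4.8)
The plan is to verify the detailed-balance identity (\ref{eq:bt6}) one edge at a time. Every edge of the binary tree joins a vertex $x$ either to its parent $x^{*}$ or to one of its two children $(x0),(x1)$, and the asserted identity is symmetric under interchange of its two endpoints (swapping $x$ and $y$ reproduces the same equation). Hence it suffices to treat a single \emph{downward} edge, say $y=(xj)$ with $j\in\{0,1\}$, where $x\in V'$. For such an edge the prescription (\ref{eq:bt2}) gives the downward probability $\mathrm{Prob}(x\rightarrow(xj))=p_{j}$ and the upward probability $\mathrm{Prob}((xj)\rightarrow x)=p_{-}$, since $x$ is the parent of $(xj)$.

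The heart of the matter is to track how the weight $c$ of (\ref{eq:bt5}) changes along such a downward edge. Appending the symbol $j$ to the word $x$ increments $F_{j}$ by one and the length $|x|$ by one, while leaving $F_{1-j}$ unchanged; substituting into (\ref{eq:bt5}) therefore yields the single multiplicative identity
\begin{equation}
\frac{c((xj))}{c(x)}=\frac{p_{j}}{p_{-}},\qquad j=0,1.
\end{equation}
Cross-multiplying gives $c(x)\,p_{j}=c((xj))\,p_{-}$, which is exactly (\ref{eq:bt6}) with $y=(xj)$. This disposes of every edge not incident with the root, and no estimates or limiting arguments are needed beyond this one computation.

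It then remains to treat the root $o=\phi$, which I would handle separately because it has no parent and so the parameter $p_{-}$ does not enter its outgoing rule (\ref{eq:bt2}) literally. Here the empty-product convention in (\ref{eq:bt5}) gives $c(\phi)=1$, while the two neighbors $0,1$ of $\phi$ carry upward probability $p_{-}$ and weights $c(0)=p_{0}/p_{-}$, $c(1)=p_{1}/p_{-}$; the identical cross-multiplication then forces the root-to-child probabilities to equal $p_{0}$ and $p_{1}$ in order for (\ref{eq:bt6}) to close up on these two edges. I expect this bookkeeping at the root to be the only genuinely delicate point: since (\ref{eq:bt2}) is stated only for $x\in V'$, one must pin down the outgoing probabilities at $\phi$ consistently with detailed balance (equivalently, confirm that $c(\phi)=1$ is the correct normalization), whereas away from the root the entire verification reduces to the displayed multiplicative identity above.
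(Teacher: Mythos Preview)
Your argument is correct and is exactly the ``direct inspection of the cases'' that the paper's one-line proof gestures at; you have simply carried it out explicitly by computing the ratio $c((xj))/c(x)=p_j/p_-$ from (\ref{eq:bt5}) and cross-multiplying. Your flagging of the root edge as the only place requiring separate bookkeeping is apt (and in fact is a point the paper's terse proof does not address): with $c(\phi)=1$ the detailed-balance relation forces $\mathrm{Prob}(\phi\to j)=p_j$, so the two root probabilities sum to $1-p_-$ rather than $1$ unless one renormalizes at $\phi$---an ambiguity already present in the paper's setup rather than in your verification.
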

\begin{proof}
This follows from a direct inspection of the cases; see also Figures
\ref{fig:bt} and \ref{fig:btt}.
\end{proof}
For $\left(xy\right)\in E$, using (\ref{eq:bt5})-(\ref{eq:bt6}),
set $c_{xy}:=c\left(x\right)\mbox{Prob}\left(x\rightarrow y\right)$,
and 
\begin{equation}
\left(\Delta u\right)\left(x\right):=\sum_{y\sim x}c_{xy}\left(u\left(x\right)-u\left(y\right)\right).\label{eq:bt7}
\end{equation}

\begin{cor}
If $\min\left(p_{i}\right)>p_{-}$, then $\Delta$ in (\ref{eq:bt7})
has deficiency indices $\left(1,1\right)$, i.e., the non-zero solution
$u$ to $-u=\Delta u$ is in $\mathscr{H}_{E}$; i.e., $0<\left\Vert u\right\Vert _{\mathscr{H}_{E}}<\infty$. \end{cor}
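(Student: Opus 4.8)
The plan is to reduce to the eigenvalue problem for the defect space and then build a decaying solution by a radialized transfer-matrix analysis, exactly as in the one-dimensional model of Lemma \ref{lem:Q11}. By Lemma \ref{lem:lapd} the deficiency indices are $(m,m)$ with $m = \dim\{u \in \mathscr{H}_E : \Delta u = -u\}$, and by Corollary \ref{cor:P} this coincides with the dimension of the finite-energy solutions of $(1 + 1/c(x))u(x) = (Pu)(x)$, see (\ref{eq:F3-2}). The first thing I would record is the structural role of the hypothesis: from (\ref{eq:bt5}), $c(x) = (p_0/p_-)^{F_0(x)}(p_1/p_-)^{F_1(x)}$, so $\min(p_i) > p_-$ forces both ratios to exceed $1$ and hence $c(x) \ge (\min(p_i)/p_-)^{|x|} \to \infty$ along \emph{every} ray; equivalently $1/c(x)$ decays geometrically and \emph{uniformly} over each level set $\{|x| = n\}$. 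Thus the eigen-equation is a uniformly small perturbation of the harmonic equation $Pu = u$ at large $|x|$.

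For existence ($m \ge 1$) I would first treat the symmetric case $p_0 = p_1$, where $c(x) = Q^{|x|}$ with $Q = p_0/p_- > 1$ is genuinely radial; writing $u(x) = f(|x|)$ collapses $\Delta u = -u$ to the three-term recurrence $(1 + Q^{-n})f(n) = (1-p_-)f(n+1) + p_- f(n-1)$, which is of the same form as the one analyzed in Lemma \ref{lem:Q11}. Its characteristic rates are $1$ and $\rho_- := p_-/(1-p_-)$, and the hypothesis gives $\rho_- < 1$; the equation at the root $\phi$ (where $E(\phi) = \{0,1\}$) supplies the boundary relation that fixes the recurrence solution up to scale, and the asymptotic analysis à la Lemma \ref{lem:Q11} shows this solution decays at rate $\rho_-$. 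In the general case $p_0 \ne p_1$ the solution is no longer radial, but the transfer-matrix asymptotics along each ray still produce the same two rates $1, \rho_-$, and I would construct the outward solution starting from $u(\phi)$, using the uniform smallness of $1/c(x)$ to show it is asymptotic to the decaying profile.

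Finite energy of this solution is then a binomial computation. Using the reversible form $c_{x,x^*} = c(x)p_-$ for the edge joining $x$ to its parent $x^*$, I would write $\|u\|^2_{\mathscr{H}_E} = \sum_{x \in V'} c(x)p_-\,|u(x) - u(x^*)|^2$ and group the sum by the level $n = |x|$. The key identity is $\sum_{|x| = n} c(x) = \sum_{j=0}^n \binom{n}{j}(p_0/p_-)^j(p_1/p_-)^{n-j} = ((1-p_-)/p_-)^n = \rho_-^{-n}$, which, combined with $|u(x)-u(x^*)|^2 \asymp \rho_-^{2n}$, yields the geometric series $\sum_n \rho_-^{\,n} < \infty$ since $\rho_- < 1$. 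This gives $0 < \|u\|_{\mathscr{H}_E} < \infty$, hence $m \ge 1$, so $\Delta$ is not essentially self-adjoint.

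The hard part will be the upper bound $m \le 1$. Unlike the half-line, the tree carries a priori infinitely many degrees of freedom for $\Delta u = -u$: at each vertex the single equation constrains the \emph{two} child-values, so the unconstrained solution space is infinite-dimensional and finite energy must eliminate all but one parameter. I would attack this by exploiting the uniform decay of $1/c(x)$ together with reversibility (self-adjointness of $P$ in $l^2(\widetilde{c})$): the aim is to show that on each subtree the non-decaying ($\rho = 1$) component of any finite-energy solution must vanish, so that the solution is rigidly determined by its value at the root, leaving exactly the one-parameter family found above. Turning this rigidity into a clean subtree-by-subtree energy estimate is where the real work lies, and is the step I expect to be most delicate.
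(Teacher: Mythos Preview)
The paper's own proof is one sentence: the analysis is ``analogous to'' Example \ref{ex:1d} and the details are omitted. Your proposal fleshes out exactly that analogy --- reduce the defect equation to a second-order recurrence and mimic the asymptotics of Lemma \ref{lem:Q11} --- so the approach is the same; you have simply supplied far more than the paper does, including the level-sum identity $\sum_{|x|=n} c(x) = ((1-p_-)/p_-)^n$, which is the right bookkeeping for the energy estimate.

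Two comments. First, you correctly identify $m \le 1$ as a genuine difficulty absent from the half-line case. The paper does not address it either; the ``i.e.'' clause in the statement (``the non-zero solution $u$ to $-u = \Delta u$ is in $\mathscr{H}_E$'') indicates that what is really being asserted --- and what the analogy with Lemma \ref{lem:Q11} actually delivers --- is only $m \ge 1$, i.e., non-essential-self-adjointness, not a sharp count of the defect dimension. For matching the paper, your existence argument already suffices. Second, in the non-symmetric case $p_0 \ne p_1$ the radial ansatz does not literally work, since $c(x)$ is not a function of $|x|$ alone and so the reduced recurrence has an $x$-dependent perturbation term. Your claim that the transfer rates are still $1$ and $\rho_-$ is correct for the \emph{unperturbed} equation $Pu = u$ (only $p_-$ and $p_0 + p_1 = 1 - p_-$ enter there), and to push the perturbation through uniformly across each level you need precisely the bound $c(x) \ge (\min_i p_i / p_-)^{|x|}$ that you recorded at the outset; with that, your finite-energy computation goes through as written.
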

\begin{proof}
The analysis here is analogous to the one above in Example \ref{ex:1d},
and so we omit the details here.\end{proof}
\begin{rem}
For literature on the binomial model and its applications, see for
example \cite{YG06,CLM03,Lon80}.
\end{rem}

\subsection{\label{ex:lad}A 2D lattice}

Let $G=\left(V,E,c\right)$ the graph in Fig \ref{fig:2dl}. Fix $Q,\overline{Q}>1$,
and set the conductance as 
\begin{align*}
a_{n}:=c_{n-1,n} & =Q^{n}\\
\overline{a}_{n}:=\overline{c}_{n-1,n} & =\overline{Q}^{n}
\end{align*}
for all $n\in\mathbb{Z}_{+}$. We get the set of transition probabilities
as follows: 
\[
\begin{cases}
p_{+}=\frac{Q}{1+Q},\;\overline{p}_{+}=\frac{\overline{Q}}{1+\overline{Q}}\\
p_{-}=\frac{1}{1+Q},\;\overline{p}_{-}=\frac{1}{1+\overline{Q}}\\
p_{d}=\mbox{vertical transitions}
\end{cases}
\]

\begin{figure}[H]
\includegraphics[scale=0.5]{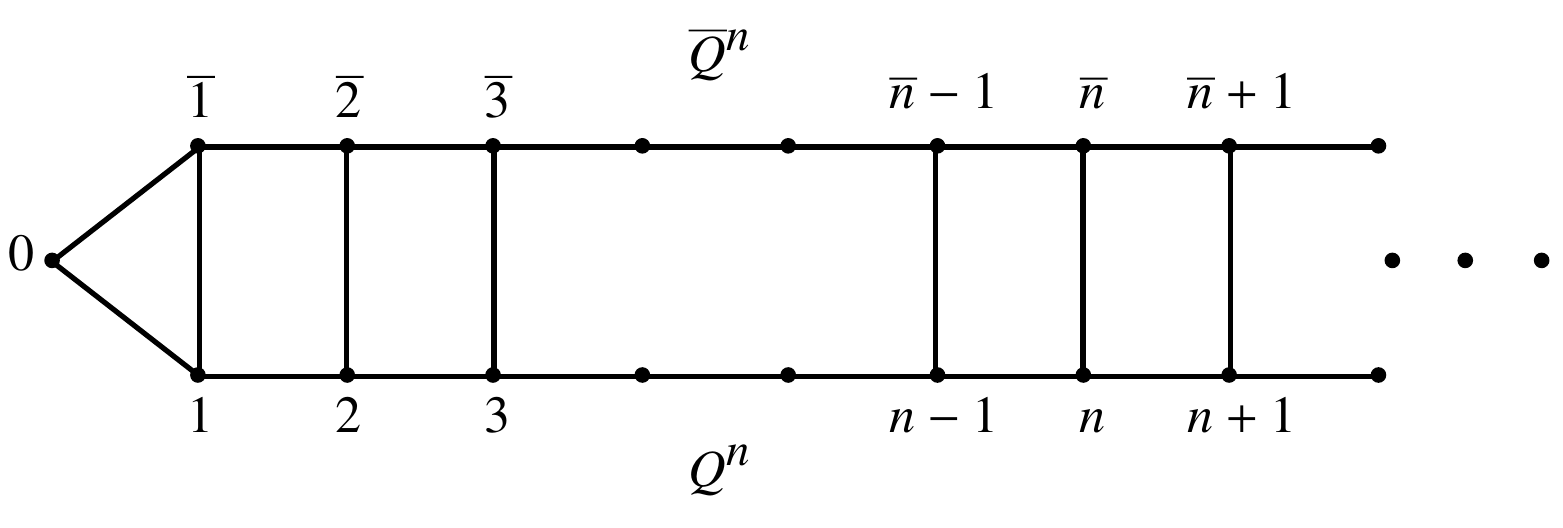}

\protect\caption{\label{fig:2dl}Conductance: $c_{n-1,n}=Q^{n}$ and $\overline{c}_{n-1,n}=\overline{Q}^{n}$}

\end{figure}

\subsection{\label{ex:tri}A Parseval frame that is not an ONB in $\mathscr{H}_{E}$}

Let $c_{01},c_{02},c_{12}$ be positive constants, and assign conductances
on the three edges (see Fig \ref{fig:tri}) in the triangle network.

\begin{figure}[H]
\includegraphics[scale=0.4]{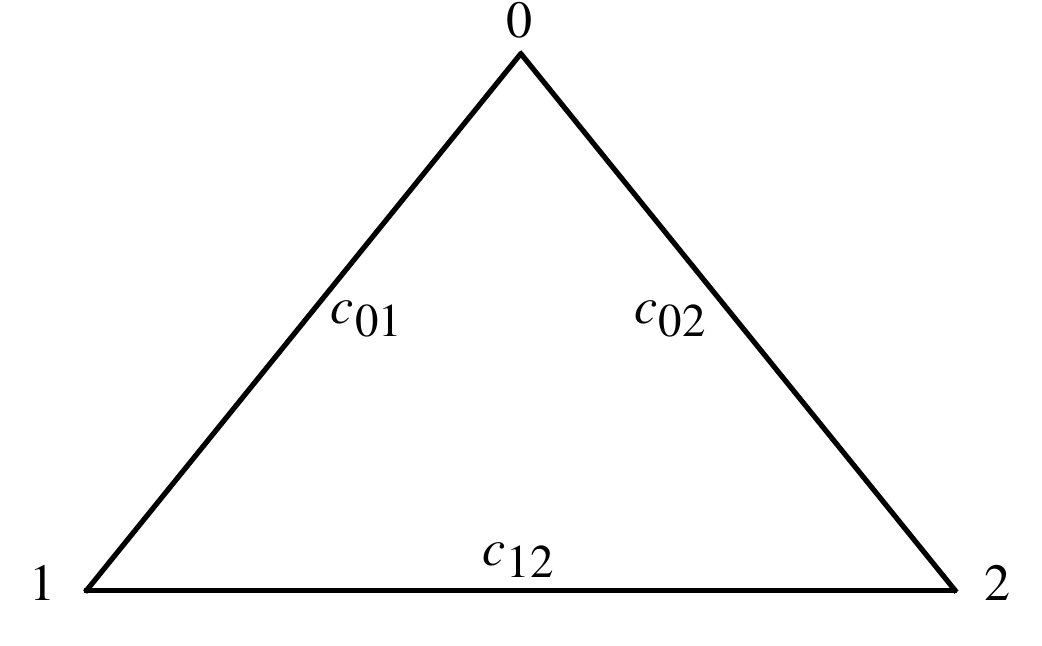}

\protect\caption{\label{fig:tri}The set $\left\{ v_{xy}:\left(xy\right)\in E\right\} $
is not orthogonal.}
\end{figure}

We show that $w_{ij}=\sqrt{e_{ij}}v_{ij}$, $i<j$, in the cyclic
order is a Parseval frame but not an ONB in $\mathscr{H}_{E}$.

Note the corresponding Laplacian $\Delta\left(=\Delta_{c}\right)$
has the following matrix representation 
\begin{equation}
M:=\begin{bmatrix}c\left(0\right) & -c_{01} & -c_{02}\\
-c_{01} & c\left(1\right) & -c_{12}\\
-c_{02} & -c_{12} & c\left(2\right)
\end{bmatrix}\label{eq:LM}
\end{equation}
The dipoles $\left\{ v_{xy}:\left(xy\right)\in E^{\left(ori\right)}\right\} $
as 3-D vectors are the solutions to the equation 
\[
\Delta v_{xy}=\delta_{x}-\delta_{y}.
\]
Hence, 
\begin{align*}
Mv_{01} & =\begin{bmatrix}1 & -1 & 0\end{bmatrix}^{tr}\\
Mv_{02} & =\begin{bmatrix}1 & 0 & -1\end{bmatrix}^{tr}\\
Mv_{12} & =\begin{bmatrix}0 & 1 & -1\end{bmatrix}^{tr}
\end{align*}

We check directly eq. (\ref{eq:en2-2}) holds, and so $\left\{ v_{xy}:\left(xy\right)\in E^{\left(ori\right)}\right\} $
is not orthonormal. For example, we have 
\[
v_{01}=\left[\frac{c_{12}}{c_{01}c_{02}+c_{01}c_{12}+c_{02}c_{12}},-\frac{c_{02}}{c_{01}c_{02}+c_{01}c_{12}+c_{02}c_{12}},0\right]^{tr}
\]
and 
\[
c_{01}\left(v_{01}\left(0\right)-v_{01}\left(1\right)\right)=\frac{c_{01}\left(c_{12}+c_{02}\right)}{c_{01}c_{02}+c_{01}c_{12}+c_{02}c_{12}}<1;
\]
see (\ref{eq:en2-2}). Hence the voltage drop across $\left(01\right)$
is strictly smaller then the $\left(01\right)$ resistance, i.e.,
\[
v_{01}\left(0\right)-v_{01}\left(1\right)<\frac{1}{c_{01}}=\mbox{Res}_{\left(01\right)}.
\]

In this example, the Parseval frame from Lemma \ref{lem:eframe} is
\begin{align*}
w_{01} & =\sqrt{c_{01}}v_{01}=\left[\frac{\sqrt{c_{01}}\, c_{12}}{c_{01}c_{02}+c_{01}c_{12}+c_{02}c_{12}},-\frac{\sqrt{c_{01}}\, c_{02}}{c_{01}c_{02}+c_{01}c_{12}+c_{02}c_{12}},0\right]^{tr}\\
w_{12} & =\sqrt{c_{12}}v_{12}=\left[0,\frac{\sqrt{c_{12}}\, c_{02}}{c_{01}c_{02}+c_{01}c_{12}+c_{02}c_{12}},-\frac{\sqrt{c_{12}}\, c_{01}}{c_{01}c_{02}+c_{01}c_{12}+c_{02}c_{12}}\right]^{tr}\\
w_{20} & =\sqrt{c_{20}}v_{20}=\left[\frac{-\sqrt{c_{20}}\, c_{12}}{c_{01}c_{02}+c_{01}c_{12}+c_{02}c_{12}},0,\frac{\sqrt{c_{20}}\, c_{01}}{c_{01}c_{02}+c_{01}c_{12}+c_{02}c_{12}}\right]^{tr}.
\end{align*}

\begin{rem}
The dipole $v_{xy}$ is unique in $\mathscr{H}_{E}$ as an equivalence
class, not a function on $V$. Note $\ker M$ = harmonic functions
= constant (see (\ref{eq:LM})), and so $v_{xy}+\mbox{const}=v_{xy}$
in $\mathscr{H}_{E}$. Thus, the above frame vectors have non-unique
representations as functions on $V$. Also see Remark \ref{rem:harm}.
\end{rem}
Introduce the vector system of conductance as follows:
\begin{align*}
\widetilde{c}_{1} & =\left(c_{01},c_{01},c_{02}\right)\\
\widetilde{c}_{2} & =\left(c_{02},c_{12},c_{12}\right)\\
\widetilde{c}_{0} & =\left(c_{01},c_{02},c_{12}\right)
\end{align*}
we arrive at the following formula for the spectrum of the system
$\left(V,E,c,\Delta\right)$ where $\left(V,E\right)$ is the triangle
in Fig \ref{fig:tri}. 
\begin{align*}
\lambda_{1} & =0\\
\lambda_{2} & =\mbox{tr}\widetilde{c}_{0}-\sqrt{\left\Vert \widetilde{c}_{0}\right\Vert ^{2}-\left\langle \widetilde{c}_{1},\widetilde{c}_{2}\right\rangle }\\
\lambda_{3} & =\mbox{tr}\widetilde{c}_{0}+\sqrt{\left\Vert \widetilde{c}_{0}\right\Vert ^{2}-\left\langle \widetilde{c}_{1},\widetilde{c}_{2}\right\rangle },
\end{align*}
and so the spectral gap
\[
\lambda_{3}-\lambda_{2}=2\sqrt{\left\Vert \widetilde{c}_{0}\right\Vert ^{2}-\left\langle \widetilde{c}_{1},\widetilde{c}_{2}\right\rangle }
\]
is a function of the coherence for $\widetilde{c}_{1}$ and $\widetilde{c}_{2}$.

\section{Open problems}

Let $G=\left(V,E,c\right)$ be a graph, with vertices $V$, edges
$E$, and conductance $c$; $V$ is countable infinite. The graph-Laplacian
$\Delta$ is essentially selfadjoint as an $l^{2}\left(V\right)$-operator,
but not as a $\mathscr{H}_{E}$-operator. It is known that $\Delta$,
as a $\mathscr{H}_{E}$-operator, has deficiency indices $\left(m,m\right)$,
$m>0$, when the conductance function $c$ is of exponential growth. 
\begin{enumerate}[label=\arabic{enumi}.]
\item Compare the deficiency indices of $\Delta$ (in $\mathscr{H}_{E}$)
for various cases: $V=\mathbb{Z}^{d}$, $d=1,2,3,\ldots$; nearest
neighbor. If $d=1$ must the indices then be $\left(m,m\right)$ with
$m=0$ or 1? Are there examples with $m=2$ in any of the classes
of examples? 
\item What is the spectral representation of the Friedrichs extension $\Delta_{Fri}$
as a $\mathscr{H}_{E}$-operator?  Find the $\mathscr{H}_{E}$ projection
valued measure. 
\item Find the spectrum of the transition operator $P$, considered as an
operator in $l^{2}(V,\widetilde{c})$. When is there point-spectrum?
If so, what are the two top eigenvalues? What is the connection between
this spectrum (the spectrum of $P$), and the spectrum of $\Delta_{Fri}$
in $\mathscr{H}_{E}$, and of $\Delta$ in $l^{2}(V)$?
\item It is not known whether or not the transition operator $P$ is bounded
as an operator of $\mathscr{H}_{E}$ into itself.\end{enumerate}
\begin{acknowledgement*}
The co-authors thank the following for enlightening discussions: Professors
Sergii Bezuglyi, Dorin Dutkay, Paul Muhly, Myung-Sin Song, Wayne Polyzou,
Gestur Olafsson, Robert Niedzialomski, Keri Kornelson, and members
in the Math Physics seminar at the University of Iowa.
\end{acknowledgement*}
\bibliographystyle{amsalpha}
\bibliography{number7}

\end{document}